\newif\ifcommentout
\def \Meas {\mathbf{M}}                 
\def \OMeas {\mathbf{N}}                 
\def \Oper {\mathbf{\Omega}}                     
\def \Dict {\mathbf{D}}         
\def \satom {\mathbf{d}}                
\def \aatom {\omega}            
\def \mdim {m}                          
\def \pdim {p}                          
\def \sdim {d}                          
\def \ddim {n}                          
\def \sparsity {k}                      
\def \cosparsity {\ell}                 
\def \x {\mathbf{x}}                    
\def \v {\mathbf{v}}                    
\def \u {\mathbf{u}}                    
\def \w {\mathbf{w}}                    
\def \y {\mathbf{y}}                    
\def \z {\mathbf{z}}                    
\def \supp {T}                          
\def \cosupp {\Lambda}          
\def \dimmax {\kappa}
\newcommand\norm[2] {\| #2 \|_{#1}}         
\newcommand\opnorm[2] {\|\mkern-2mu| #2 |\mkern-2mu\|_{#1}}         
\newcommand\innerp[2]{\langle #1, #2 \rangle}
\def \RR {\mathbb{R}}           
\def \sspace {\mathcal{V}}      
\def \aspace {\mathcal{W}}      
\def \vspan  {\operatorname{span}}
\def \vdim   {\operatorname{dim}}
\newtheorem{thm}{Theorem}
\newtheorem{remark}[thm]{Remark}
\newtheorem{lemma}[thm]{Lemma}
\newtheorem{Proposition}[thm]{Proposition}
\newtheorem{Definition}[thm]{Definition}
\newtheorem{corollary}[thm]{Corollary}
\def \Sphe {\mathbf{S}}         
\def \Id {\mathbf{Id}}          
\def \Range {\operatorname{Range}}             
\def \Null {\operatorname{Null}}
\def \sign {\operatorname{sign}}
\newcommand{\argmin}{\operatornamewithlimits{argmin}}
\def \subjectto {\quad\text{subject to}\quad}
\begin{document}
\title{The Cosparse Analysis Model and Algorithms\tnoteref{t1}}
\tnotetext[t1]{This work was supported in part by the EU through the project SMALL (Sparse Models, Algorithms and Learning for Large-Scale data), FET-Open programme, under grant number: 225913}
\author[inria]{S.~Nam}\ead{sangnam.nam@inria.fr}
\author[edinburgh]{M.~E.~Davies}\ead{Mike.Davies@ed.ac.uk}
\author[technion]{M.~Elad}\ead{elad@cs.technion.ac.il}
\author[inria]{R.~Gribonval}\ead{remi.gribonval@inria.fr}
\address[inria]{Centre de Recherche INRIA Rennes - Bretagne Atlantique,
Campus de Beaulieu, F-35042 Rennes, France}
\address[edinburgh]{School of Engineering and Electronics,
The University of Edinburgh, Edinburgh, EH9 3JL, UK}
\address[technion]{Department of Computer Science,
The Technion, Haifa 32000, Israel}

\begin{abstract}
After a decade of extensive study of the sparse representation
synthesis model, we can safely say that this is a mature and stable
field, with clear theoretical foundations, and appealing
applications. Alongside this approach, there is an {\em
analysis} counterpart model, which, despite its similarity to the
synthesis alternative, is markedly different. Surprisingly, the
analysis model did not get a similar attention, and its
understanding today is shallow and partial.

In this paper we take a closer look at the analysis approach, better
define it as a generative model for signals, and contrast it with
the synthesis one. This work proposes effective pursuit methods that
aim to solve inverse problems regularized with the analysis-model
prior, accompanied by a preliminary theoretical study of their
performance. We demonstrate the effectiveness of the analysis model
in several experiments.

\end{abstract}

\maketitle

\vspace{0.5in} \noindent {\bf Keywords:} Synthesis, Analysis, Sparse
Representations, Union of Subspaces, Pursuit Algorithms, Greedy
Algorithms, Compressed-Sensing.

\vspace{0.5in}

\section{Introduction}

Situated at the heart of signal and image processing, data models
are fundamental for stabilizing the solution of inverse problems,
and enabling various other tasks, such as compression, detection,
separation, sampling, and more. What are those models? Essentially,
a model poses a set of mathematical properties that the data is
believed to satisfy. Choosing these properties (i.e. the model)
carefully and wisely may lead to a highly effective treatment of the
signals in question and consequently to successful applications.

Throughout the years, a long series of models has been proposed and
used, exhibiting an evolution of ideas and improvements. In this
context, the past decade has been certainly the era of sparse and
redundant representations, a novel synthesis model for describing
signals \cite{DBLP:books/daglib/0025129,EladReview,Mallat:2008:WTS:1525499,DBLP:books/daglib/0025275}. Here is a
brief description of this model:

Assume that we are to model the signal $\x \in \RR^\sdim$. The
sparse and redundant synthesis model suggests that this signal could
be described as $\x = \Dict \z$, where $\Dict \in \RR^{\sdim \times
\ddim}$ is a possibly redundant dictionary ($\ddim\ge \sdim$), and
$\z \in \RR^\ddim$, the signal's representation, is assumed to be
sparse. Measuring the cardinality of non-zeros of $\z$ using the `$\ell_0$-norm',
such that $\|\z\|_0$ is the count of the non-zeros in $\z$, we
expect $\|\z\|_0$ to be much smaller than $\ddim$. Thus, the model
essentially assumes that any signal from the family of interest
could be described as a linear combination of few columns from the
dictionary $\Dict$. The name ``synthesis'' comes from the relation
$\x = \Dict \z$, with the obvious interpretation that the model
describes a way to synthesize a signal.

This model has been the focus of many papers, studying its core
theoretical properties by exploring practical numerical algorithms
for using it in practice (e.g.
\cite{ChenDonohoSaunders,MallatZhang,candes2007dantzig,DaiMilenkovicSP}),
evaluating theoretically these algorithms' performance guarantees
(e.g.
\cite{gribonval03:_spars,donohoElad,Tropp04greedis,TroppRelax,Ben-HaimEE10}),
addressing ways to obtain the dictionary from a bulk of data (e.g.
\cite{MOD,KSVD,Mairal:2010:OLM:1756006.1756008,DBLP:journals/tsp/SkrettingE10}),
and beyond all these, attacking a long
series of applications in signal and image processing with this
model, demonstrating often state-of-the-art results (e.g.
\cite{JLInpainting,KSVDDenoising,Llagostera-Casanovas:2010aa,Plumbley:2009aa}).
Today, after a decade of an extensive study along the above lines,
with nearly 4000 papers\footnote{This is a crude estimate, obtained
using ISI-Web-of-Science. By first searching Topic=(sparse and
representation and (dictionary or pursuit or sensing)), 240 papers
are obtained. Then we consider all the papers that cite the
above-found, and this results with $\approx$3900 papers.} written on
this model and related issues, we can safely say that this is a
mature and stable field, with clear theoretical foundations, and
appealing applications.

Interestingly, the synthesis model has a ``twin'' that takes an {\em
analysis} point of view.
This alternative assumes that for a signal of interest, the analyzed
vector $\Oper \x$ is expected to be sparse, where $\Oper \in
\RR^{\pdim \times \sdim }$ is a possibly redundant {\em analysis
operator} ($\pdim\ge \sdim$). Thus, we consider a signal as
belonging to the analysis model if $\|\Oper \x\|_0$ is small enough.
Common examples of analysis operators include: the shift invariant
wavelet transform $\Oper_{\mathrm{WT}}$ \cite{Mallat:2008:WTS:1525499}; the
finite difference operator $\Oper_{\mathrm{DIF}}$, which
concatenates the horizontal and vertical derivatives of an image and
is closely connected to total variation \cite{ROF92}; the curvelet
transform \cite{curvelet}, and more. Empirically, analysis models have
been successfully used for a variety of signal processing tasks such
as denoising, deblurring, and most recently compressed sensing, but this has been done with
little theoretical justification. 

It is well known by now \cite{elad07:analysisvsynthesis} that for a
square and invertible dictionary, the synthesis and the analysis
models are the same with $\Dict = \Oper^{-1}$. The models remain
similar for more general dictionaries, although then the gap between
them is unexplored. Despite the close-proximity between the two --
synthesis and analysis -- models, the first has been studied
extensively while the second has been left aside almost untouched.
In this paper we aim to bring justice to the analysis model by
addressing the following set of topics:
\begin{enumerate}
\item {\bf Cosparsity:} In Section~\ref{sec:model} we start
our discussion with a closer look at the sparse analysis model in
order to better define it as a generative model for signals. We show
that, while the synthesis model puts an emphasis on the non-zeros of
the representation vector $\z$, the analysis model draws its
strength from the zeros in the analysis vector $\Oper \x$.

\item {\bf Union of Subspaces:} Section~\ref{sec:model} is also
devoted to a comparison between the synthesis model and the analysis
one. We know that the synthesis model described above is an instance
of a wider family of models, built as a finite union of subspaces
\cite{Lu:2008ab}. By choosing all the sub-groups of columns from
$\Dict$ that could be combined linearly to generate signals, we get
an exponentially large family of low-dimensional subspaces that
cover the signals of interest. Adopting this perspective, the
analysis model can obtain a similar interpretation. How are the two
related to each other? Section~\ref{sec:model} considers this
question and proposes a few  answers.

\item {\bf Uniqueness:} We know that the {\em spark} of the
dictionary governs the uniqueness properties of sparse solutions of
the underdetermined linear system $\Dict \z = \x$ \cite{donohoElad}.
Can we derive a similar relation for the analysis case? As a
platform for studying the analysis uniqueness properties, we
consider an inverse problem of the form $ \y = \Meas \x$, where
$\Meas \in \RR^{\mdim \times \sdim}$ and $\mdim <\sdim$, and $\y \in
\RR^{\mdim}$ is a measurement vector. Put roughly (and this will be
better defined later on), assuming that $\x$ comes from the sparse
analysis model, could we claim that there is only one possible
solution $\x$ that can explain the measurement vector $\y$? 
Section~\ref{sec:uniqueness} presents this uniqueness study.

\item {\bf Pursuit Algorithms:} Armed with a deeper understanding of
the analysis model, we may ask how to efficiently find $\x$ for the
above-described linear inverse problem. As in the synthesis case, we can consider
either relaxation-based methods or greedy ones. 
In Section~\ref{sec:algorithm} we present two numerical approximation
algorithms: a greedy algorithm termed
``Greedy Analysis Pursuit'' (GAP) that resembles the Orthogonal
Matching Pursuit (OMP) \cite{MallatZhang} -- adapted to the analysis model --, and the previously
considered $\ell^1$-minimization approach
\cite{elad07:analysisvsynthesis,selesnick09:signalrestoration,Candes:2010ab}.
Section~\ref{sec:theory} accompanies 
the presentation of GAP with a theoretical study of its performance
guarantee, deriving a condition that resembles the ERC obtained for
OMP \cite{Tropp04greedis}. Similarly, we study the terms of success of
the $\ell_1$-minimization approach for the analysis model, deriving
a condition that is similar to the one obtained for the synthesis
sparse model \cite{Tropp04greedis}.

\item {\bf Tests:} In Section~\ref{sec:experiment} we demonstrate
the effectiveness of the analysis model and the pursuit algorithms
proposed in several experiments, starting from synthetic ones and
going all the way to a compressed-sensing test for an image based on
the analysis model: the Shepp Logan phantom.

\end{enumerate}

\noindent We believe that with the above set of contributions, the
cosparse
analysis model becomes a well-defined and competitive model to the
synthesis counterpart, equipped with all the necessary ingredients
for its practical use. Furthermore, this work leads to a series of
new questions that are parallel to those studied for the synthesis
model -- developing novel pursuit methods, a theoretical study of
pursuit algorithms for handling other inverse problems, training
$\Oper$ just as done for $\Dict$, and more. We discuss these and
other topics in Section~\ref{sec:final}.

\paragraph{Related Work}
Several works exist in the literature that are related to the
analysis model. The work by Elad et. al.
\cite{elad07:analysisvsynthesis} was the first to observe the
dichotomy of analysis and synthesis models for signals. Their study,
done in the context of the Maximum-A-Posteriori Probability
estimation, presented the two alternatives and explored cases of
equivalence between the two. They demonstrated a superiority of the
analysis-based approach in signal denoising. Further empirical
evidence of the effectiveness of the analysis-based approach can be
found in \cite{portilla09:analysis} and
\cite{selesnick09:signalrestoration} for signal and image
restoration. In \cite{selesnick09:signalrestoration} it was noted that
the nonzero coefficients play a different role in the analysis and
synthesis forms but the importance of
the zero coefficients for the analysis model -- which is reminiscent of signal characterizations through the zero-crossings of their undecimated wavelet transform~\cite{86995} -- was not explicitly identified.

More recently, Cand\`es et al. \cite{Candes:2010ab}
provided a theoretical study on the error when the analysis-based
$\ell_1$-minimization is used in the context of compressed sensing.
Our work is closely related to these contributions in various ways,
and we shall return to these papers when diving into the details of
our study.

\section{A Closer Look at the Cosparse Analysis Model}
\label{sec:model}

We start our discussion with the introduction of the sparse analysis model, and
the notion of cosparsity that is fundamental for its definition. We  also
describe how to interpret the analysis model as a generative one (just like the
synthesis counterpart). Finally, we consider the interpretation of the sparse
analysis and synthesis models as two manifestations of union-of-subspaces
models, and show how they are related.

\subsection{Introducing Cosparsity}

As described in the introduction, a conceptually simple model for
data would be to assume that each signal we consider can be
expressed (i.e., well-approximated) as a combination of a few
building atoms. Once we take this view, a simple synthesis model can
be thought of: First, there is a collection of the atomic signals
$\{\satom_j\}_{j=1}^n \in \RR^{\sdim}$ that we concatenate as the
columns of a dictionary, denoted by $\Dict \in
\RR^{\sdim\times\ddim}$. Here, typically $\ddim \ge \sdim$, implying
that the dictionary is redundant. Second, the signal $\x \in
\RR^{\sdim}$ can be expressed as a linear combination of some atoms
of $\Dict$, thus there exists $\z \in \RR^{\ddim}$ such that $\x =
\Dict \z$. Third and most importantly, $\x$ must lie in a low
dimensional subspace, and in order to ensure this, very few atoms
are used in the expression $\x = \Dict \z$, i.e., the number of
non-zeros $\norm{0}{\z}$ is very small. By the observation that
$\norm{0}{\z}$ is small, we say that $\x$ has a \emph{sparse
representation in $\Dict$}. The number $\sparsity = \norm{0}{\z}$ is
the \emph{sparsity} of $\x$.

Often, the validity of the above described sparse synthesis model is
demonstrated by applying a linear transform to a class of signals to
be processed and observing that most of the coefficients are close
to zero, exhibiting sparsity. In signal and image processing,
discrete transforms such as wavelet, Gabor, curvelet, contourlet,
shearlet, and others \cite{Mallat:2008:WTS:1525499,curvelet,contourlet,shearlet2005},
are of interest, and this empirical observation seems to give a good
support for the sparse synthesis model. Indeed, when aiming to claim
optimality of a given transform, this is exactly the approach taken
-- show that for a (theoretically-modeled) class of signals of
interest, the transform coefficients tend to exhibit a strong decay.
However, one cannot help but noticing that this approach of
validating the synthesis model seems to actually validate another
`similar' model; we are considering a model where the signals of
interest have {\em sparse analysis representations}. This point is
especially pronounced when the transform used is over-complete or
redundant.

Let us now look more carefully at the above mentioned model that
seems to be similar to the sparse synthesis one. First, let
$\Oper\in\RR^{\pdim\times\sdim}$ be a signal transformation or an
\emph{analysis operator}. Its rows are the row vectors 
$\{\aatom_j\}_{j=1}^p$ that will be applied to the signals. Applying
$\Oper$ to $\x$, we obtain the (analysis) representation $\Oper\x$
of $\x$. To capture various aspects of  the information in $\x$, we
typically have $\pdim \ge \sdim$. 

For simplicity, {\em unless stated
otherwise, we shall assume hereafter that all the rows of $\Oper$
are {\em in general position}, i.e., there are no non-trivial linear
dependencies among the rows}.\footnote{Put differently, we assume
that the spark of the matrix $\Oper^T$ is full, implying that every
set of $\sdim$ rows from $\Oper$ are linearly independent.}

Clearly, unless $\x = 0$, no representation $\Oper\x$ can be `very sparse',
since at least $\pdim -\sdim$ of the coefficients of $\Oper\x$ are
necessarily non-zeros. We shall put our emphasis on the number of
zeros in the representation, a quantity we will call
\emph{cosparsity}.

\begin{Definition}
The \emph{cosparsity} of a signal $\x \in \RR^{\sdim}$ with respect
to $\Oper \in \RR^{\pdim \times\sdim}$ (or simply the cosparsity of
$\x$) is defined to be:
\begin{eqnarray}
{Cosparsity:}~~~~~~\cosparsity := \pdim - \norm{0}{\Oper\x}
\end{eqnarray}
\end{Definition}

\noindent The index set of the zero entries of $\Oper\x$ is called the
\emph{cosupport} of $\x$. We say that $\x$ has \emph{cosparse representation} or
$\x$ is \emph{cosparse} when the cosparsity of $\x$ is large, where
by large we mean that $\cosparsity$ is close to $\sdim$. We will see
that, while $\cosparsity \le \sdim$ for an analysis operator in
general position, there are specific examples where $\cosparsity$
may exceed $\sdim$.

At first sight the replacement of \emph{sparsity} by
\emph{cosparsity} might appear to be mere semantics. However we will
see that this is not the case. In the synthesis model it is the columns
$\satom_j, j \in \supp$ associated with the index set $\supp$ of nonzero
coefficients that define the signal subspace. Removing columns from $\Dict$ not in $\supp$ leaves
this subspace unchanged. In contrast, it is the rows $\aatom_j$ associated
with the index set $\cosupp$ such that $\langle \aatom_j,\x \rangle = 0, j \in \cosupp$
that define the analysis subspace. In this case removing rows from
$\Oper$ for which $\langle \aatom_j,\x \rangle \neq 0$ leaves the
subspace unchanged.

From this perspective, the cosparse model is rather related to signal characterizations from the zero-crossings of their undecimated wavelet transform~\cite{86995} than to sparse wavelet expansions.


\subsection{Sparse Analysis Model as a Generative Model}

\label{sec:GenModel}

In a Bayesian context, one can think of data models as generators
for random signals from a pre-specified probability density
function. In that context, the signals that satisfy the
$\sparsity$-sparse synthesis model can be generated as follows:
First, choose $\sparsity$ columns of the dictionary $\Dict$ at
random (e.g. assuming a uniform probability). We denote the index
set chosen by $\supp$, and clearly $|\supp| = \sparsity$. Second,
form a coefficient vector $\z$ that is $\sparsity$-sparse, with
zeros outside the support $\supp$. The $\sparsity$ non-zeros in $\z$
can be chosen at random as well (e.g. Gaussian iid entries).
Finally, the signal is created by multiplying $\Dict$ to the
resulting sparse coefficient vector $\z$.

Could we adopt a similar view for the cosparse analysis model? The
answer is positive. Similar to the above, one can produce an
$\cosparsity$-cosparse signal in the following way: First, choose
$\cosparsity$ rows of the analysis operator $\Oper$ at random, and
those are denoted by an index set $\cosupp$ (thus,
$|\cosupp|=\cosparsity$). Second, form an arbitrary signal $\v$ in
$\RR^\sdim$ -- e.g., a random vector with Gaussian iid entries.
Then, project $\v$ to the orthogonal complement of the subspace
generated by the rows of $\Oper$ that are indexed by $\cosupp$, this
way getting the cosparse signal $\x$. Alternatively, one could first
find a basis for the orthogonal complement and then generate a
random coefficient vector for the basis.

This way, both models can be considered as generators of signals
that have a special structure, and clearly, the two signal
generators are different.  It is now time to ask how those two
families of signals inter-relate. In order to answer this question,
we take the union-of-subspaces point of view.


\subsection{Union-of-Subspaces Models}

\label{sec:Subspaces}

It is well known that the sparse synthesis model is a special
instance of a wider family of models called {\em
union-of-subspaces} \cite{Lu:2008ab,DBLP:journals/tit/BlumensathD09}. Given a dictionary $\Dict$, a vector $\z$ that
is exactly $\sparsity$-sparse with support $\supp$ leads to a signal
$\x = \Dict \z = \Dict_\supp \z_\supp$, a linear combination of
$\sparsity$ columns from $\Dict$. The notation $\Dict_\supp$ denotes
the sub-matrix of $\Dict$ containing only the columns indexed by
$\supp$. Denoting the subspace spanned by these columns by
$\sspace_\supp := \vspan (\satom_j, j \in \supp)$, the sparse
synthesis signals  belong to the union of all  $n \choose
\sparsity$ possible subspaces of dimension $\sparsity$,
\begin{eqnarray}\label{eq:SparseUnion}
\mbox{Sparse Synthesis Model:}~~~~~~~~\x \in
\cup_{\supp : |\supp|=\sparsity} ~~\sspace_\supp.
\end{eqnarray}

Similarly, the analysis model is associated to a union of subspaces
model as well. Given an analysis operator $\Oper$, a signal that is
exactly $\cosparsity$-cosparse with respect to the rows $\cosupp$
from $\Oper$ is simply in the orthogonal complement to these
$\cosparsity$ rows. Thus, we have\footnote{Note that the notation
$\Oper_\cosupp$ refers to restricting {\em rows} from $\Oper$
indexed by $\cosupp$, whereas in the synthesis case we have taken
the {\em columns}. We shall use this convention throughout this
paper, where from the context it should be clear whether rows or
columns are extracted.} $\Oper_\cosupp x = 0$, which implies that
$\x \in \aspace_\cosupp$, where $\aspace_\cosupp := \vspan
(\aatom_j, j \in \cosupp)^\perp = \left\{\x, \langle \aatom_j,
\x\rangle = 0, \forall j \in \cosupp \right\}$. Put differently, we may
write $\aspace_{\cosupp} = \Range(\Oper_{\cosupp}^T)^\perp =
\Null(\Oper_{\cosupp})$. Hence, cosparse analysis signals $\x$ 
belong to the union of all the $p \choose \cosparsity$ possible such
subspaces of dimension $\sdim - \cosparsity$,
\begin{eqnarray}\label{eq:CosparseUnion}
\mbox{Cosparse Analysis Model:}~~~~~~~~\x \in
\cup_{\cosupp : |\cosupp|= \cosparsity} ~~\aspace_\cosupp.
\end{eqnarray}
The following table summarizes these two unions of subspaces, where we recall that we consider $\Oper$ and $\Dict$ in general position.

\begin{center}
\begin{tabular}{|| l | l | l | l ||}
\hline \hline

Model & Subspaces & No. of Subspaces & Subspace dimension \\

\hline

Synthesis & $\sspace_\supp := \vspan (\satom_j, j \in \supp)$ & $n
\choose \sparsity$ & $\sparsity$ \\

\hline

Analysis & $\aspace_\cosupp := \vspan (\aatom_j, j \in
\cosupp)^\perp$ & $p \choose \cosparsity$ & $\sdim - \cosparsity$ \\

\hline \hline

\end{tabular}
\end{center}

What is the relation between these two union of subspaces, as
described in Equations~(\ref{eq:SparseUnion}) and
(\ref{eq:CosparseUnion})? In general, the answer is that the two are
different. An interesting way to compare between the two models is
to consider an $\cosparsity$-cosparse analysis model and a
corresponding $(\sdim-\cosparsity)$-sparse synthesis model, so that
the two have the same dimension in their subspaces.

Following this guideline, we consider first a special case where
$\cosparsity = d-1$. In such a case, the dimension of the analysis
subspaces is $\sdim - \cosparsity = 1$, and there are $\pdim \choose
\cosparsity$ of those. An equivalent synthesis union of
subspaces can be created, where $\sparsity=1$. We should construct a
dictionary $\Dict$ with $\ddim= {\pdim \choose \cosparsity}$ atoms
$\satom_j$, where each atom is the orthogonal complement to one of
the sets of $\cosparsity$ rows from $\Oper$. While the two models become
equivalent in this case, clearly $\ddim \gg \pdim$ in general, implying that
the sparse synthesis model becomes untractable since $\Dict$ becomes
too large.

By further assuming that $\pdim=\sdim$, we get that there are
exactly ${\pdim \choose \cosparsity} = {\sdim \choose \sdim -1}
=\sdim$ subspaces in the analysis union, and in this case $\ddim =
\pdim = \sdim$ as well. Furthermore, it is not hard to see that in
this case the synthesis atoms are obtained directly by a simple
inversion, $\Dict = \Oper^{-1}$.

Adopting a similar approach, considering the general case where
$\cosparsity$ is a general value (and not necessarily $\sdim -1$),
one could always construct a synthesis model that is equivalent to
the analysis one. We can compose the synthesis dictionary by simply
concatenating all the bases for the orthogonal complements to the
subspaces $\aspace_\cosupp$. The obtained dictionary will have at most
$(\sdim - \cosparsity) {\pdim \choose \cosparsity}$ atoms. However,
not all supports of size $\sparsity$ are allowed in the obtained
synthesis model, since otherwise the new sparse synthesis model will
strictly contain the cosparse analysis one. As such, the cosparse
analysis model may be viewed as a sparse synthesis model with some
structure.

Further on the comparison between the two models, it would be of
benefit to consider again the case $\sdim-\cosparsity = \sparsity$
(i.e., having the same dimensionality), assume that $\pdim = \ddim$
(i.e., having the same overcompleteness, for example with $\Oper = \Dict^{T}$), and compare the number of
subspaces amalgamated in each model. For the sake of simplicity we
consider a mild overcompleteness of $\pdim = \ddim = 2\sdim$. Denoting $H(t)
:= -t \log_2 t - (1-t) \log_2 (1-t)$, $0<t<1$, the number of
subspaces of low dimension $\sparsity \ll \sdim = \ddim/2$ in each
data model, from Stirling's approximation, roughly satisfies for large $\sdim$:
\begin{align*}
\mbox{Synthesis:}~~~~~~ \log_2 {\ddim \choose \sparsity} &\approx
\ddim \cdot H\left(\frac\sparsity\ddim\right)\quad~\approx \sparsity
\cdot \log_2 \frac{\ddim}{\sparsity}
\\
\mbox{Analysis:}~~~~~~ \log_2{ \pdim \choose \cosparsity} &\approx
\ddim \cdot H\left(\frac{\sdim-\sparsity}{\ddim} \right) \approx
\ddim \cdot H(0.5) = \ddim.
\end{align*}
More generally, unless $\sdim/\ddim \approx 1$, {\em there are much
fewer low-dimensional synthesis subspaces than the number of
analysis subspaces of the same dimension}. This is illustrated on
Figure~\ref{fig:NbOfSubspaces} when $\ddim = \pdim = 2\sdim$.
\begin{figure}[htbp]
\begin{center}
\includegraphics[width=\textwidth*1/2]{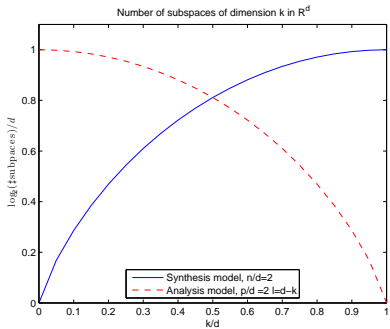}
\end{center}
\label{fig:NbOfSubspaces}
\caption{Number of subspaces of a given dimension, for $\ddim=\pdim=2\sdim$. The solid blue curve shows
the log number of subspaces for the synthesis model as the dimension of
subspaces vary, while the dashed red curve shows that for the analysis model.}
\end{figure}
This indicates a strong difference in the structure of the  two
models: The synthesis model includes very few low-dimensional
subspaces, and an increasingly large number of subspaces of higher
dimension; and the analysis model contains a combinatorial number of
low-dimensional subspaces, with fewer high dimensional subspaces.

\noindent {\bf Comment:} One must keep in mind that the huge number
of low-dimensional subspaces, though rich in terms of its
descriptive power, makes it very difficult to recover
algorithmically signals that belong to the union of those
low-dimensional subspaces or to efficiently code/sample those
signals (see the experimental results in Section~\ref{sec:performanceAnalysis}). 
This stems from the fact that in
general, it is not possible to get cosparsity $\sdim 
 \leq \cosparsity < \pdim$: any vector $\x$ that is orthogonal to
 $\sdim$ linearly independent rows of $\Oper$ must be the zero
 vector, leading to an uninformative model. One may, however, get
 cosparsities in the range $\sdim \leq \cosparsity < \pdim$ when the
 analysis operator $\Oper$ displays certain linear dependencies.
Therefore it appears to be desirable,
in the cosparse analysis model, 
to have analysis operators that exhibit highly linearly dependent
structure. We will see in Section~\ref{sec:FDOper} that a
leading example of such operators is the finite difference analysis
operator.

Another interesting point of view towards the difference between the
two models is the following: While a synthesis signal is
characterized by the support of the non-zeros in its representation
in order to define the subspace it belong to, a signal from the
analysis model is characterized by the \emph{locations of the zeros}
in its representation $\Oper \x$. The fact that this representation
may contain many non-zeroes (and especially so when $\pdim \gg
\sdim$) should be of no consequence to the efficiency of the
analysis model.


\subsection{Comparison with the Traditional Sparse Analysis model}
\label{sec:ComparisonTraditionalSparsity}

Previous work using analysis representations, both theoretical and
algorithmic, has focussed on gauging performance in terms of the more
traditional sparsity perspective.
For example, in the context of compressed sensing, recent theoretical
work \cite{Candes:2010ab} has provided performance guarantees for
minimum $\ell^1$-norm analysis representations in this light. 

The analysis operator is generally viewed as the dual
frame for a redundant synthesis dictionary so that $\Oper =
\Dict^\dagger$. This means that the analysis coefficients $\Oper \x$ provide a
consistent \emph{synthesis representation} for $\x$ in terms of the
dictionary $\Dict$, implying that the representation $\Oper \x$ is a
feasible solution to the linear system of equations $\Dict \z =
\x$.

Furthermore, if $\|\Oper \x\|_0 = \pdim-\cosparsity$, then $\Oper\x$ must be
an element of the $\sparsity$-sparse synthesis model, $\bigcup_{\supp :
  | \supp| =  \sparsity} \sspace_\supp$, with $\sparsity =
\pdim-\cosparsity$. Hence:
\begin{equation}\label{eq: model inclusion}
\{0\} \subseteq \bigcup_{\cosupp : | \cosupp | = \pdim-\sparsity} \aspace_\cosupp \subseteq
\bigcup_{\supp : | \supp| =  \sparsity} \sspace_\supp \subseteq \RR^\sdim.
\end{equation}
Of course, $\Oper \x$ is not guaranteed to be the sparsest
representation of $\x$ in terms of $\Dict$. Hence the two subspace
models are not equivalent.

Note that while in Section~\ref{sec:Subspaces} the sparsity $\sparsity$ was matched to $\sdim-\cosparsity$, here it is matched to $\pdim-\cosparsity$. The
former was used to get the same dimensions in the resulting subspaces,
while the match discussed here considers the vector $\Oper \x$ as a
candidate $\sparsity$-sparse representation.

Such a perspective treats the analysis operator as a \emph{poor man's} sparse synthesis representation. That is, for certain signals $\x$, the representation $\Oper \x$ may be reasonably sparse but is unlikely to be as sparse as, for example, the minimum $\ell^1$-norm synthesis representation\footnote{When measuring sparsity with an $\ell^{p}$ norm, $0<p\leq1$, rather than with $p=0$, it has been shown~\cite{gribonval07:_highl} that for so-called {\em localized frames} the analysis coefficients $\Oper \x$ obtained with $\Oper = \Dict^{\dagger}$ the canonical dual frame of $\Dict$ are near optimally sparse: $\|\Oper \x\|_{p} \leq C_{p} \min_{\z | \Dict\z=\x} \|\z\|_{p}$, where the constant $C_{p}$ does not depend on $\x$.}.

In the context of linear inverse problems, it is tempting to try to exploit the nesting property \eqref{eq: model inclusion} in order to derive identifiability guarantees in terms of the sparsity of the analysis coefficients $\Oper \x$. For example, in
\cite{Candes:2010ab}, the compressed sensing
recovery guarantees exploit the nesting property~\eqref{eq: model inclusion} by assuming a sufficient number of
observations to achieve a stable embedding (restricted isometry
property) for the $\sparsity$-sparse synthesis union of subspaces,
which in turn implies a stable embedding of the $(\pdim-\sparsity)$-cosparse
analysis union of subspaces.

While such an approach is of course valid, it misses a crucial
difference between the analysis and synthesis representations: they do not correspond to equivalent signal models. Treating the
two models as equivalent hides the fact that they
may be composed of subspaces with  markedly different dimensions.
The difference between these models is
highlighted in the following examples.

\subsubsection{Example: generic analysis operators, $\pdim=2\sdim$}
Assuming the rows of $\Oper$ are in general position, then when $\pdim
  \geq 2 \sdim $
  the nesting property \eqref{eq: model inclusion} is trivial but rather useless! Indeed,
  if $\sparsity < \sdim$, then the only analysis signal
  for which $\|\Oper \x\|_0 = \sparsity = \pdim - \cosparsity $ is $\x
  = 0$. Alternatively, if $\sparsity \geq \sdim$, the synthesis model
  is trivially the full space: $\bigcup_{\supp : | \supp|
    = \sparsity} \sspace_\supp = \RR^\sdim$.

\subsubsection{Example: shift invariant wavelet transform}
The shift invariant wavelet transform is a popular analysis transform in signal processing. It is particularly good for processing piecewise smooth
signals. Its inverse transform has a synthesis interpretation as the
redundant wavelet dictionary consisting of wavelet atoms with all
possible shifts.

The shift invariant wavelet transform~\cite{Mallat:2008:WTS:1525499} provides a nice example of an
analysis operator that has significant dependencies due
to the finite support of the individual wavelets. Such nontrivial
dependencies within the rows of $\Oper_{\mathrm{WT}}$ mean that the dimensions of
the (analysis or synthesis) signal subspaces are not easily characterised by either the
sparsity $\sparsity$ or the cosparsity $\cosparsity$. However the
behaviour of the model is still driven by the zero coefficients not
the nonzero ones, i.e., by the zero-crossings of the wavelet transform~\cite{86995}. By considering a particular support
set of an analysis representation $\Oper_{\mathrm{WT}} \x$ with the
shift invariant wavelet transform we can illustrate the dramatic
difference between the analysis and synthesis interpretations of the
coefficients. 

Figure~\ref{fig: cone of influence} shows the support set of the
nonzero analysis coefficients,  associated with the cone of
influence around a discontinuity in a piecewise polynomial signal of
length $128$-samples \cite{dragotti2003}, using a shift-invariant
Daubechies wavelet transform with $s=3$ vanishing moments
\cite{Mallat:2008:WTS:1525499}. For such a signal, the cone of influence at level
$J$ in a shift invariant wavelet transform contains $L_j-1$ nonzero
coefficients where $L_j$ is the length of the wavelet filter at
level $j$. Note though, the nonzero coefficients are not linearly
independent and can be elegantly described through the notion of
wavelet footprints \cite{dragotti2003}.

{\bf \em Synthesis perspective.} Interpreting the support set within
the synthesis model  implies that the signal is not particularly
sparse and needs a significant number of wavelet atoms to describe
it: in Figure~\ref{fig: cone of influence} the size of the support
set, excluding coefficients of scaling functions, is $122$. Could
the support set be significantly reduced by using a better support
selection strategy such as $\ell^1$ minimization? In practice, using
$\ell^1$ minimization, a support set of $30$ can be obtained, again
ignoring scaling coefficients.

{\bf \em Analysis perspective.} The analysis  interpretation of the
shift invariant wavelet representation relies on the examination of
the size of the analysis subspace associated with the cosupport set.
From the theory of wavelet footprints, the dimension of this
subspace is equal to the number of vanishing moments of the wavelet
filter, which in this example is only \ldots $3$, providing a much
lower dimensional signal model.

We therefore see that the analysis model has a much lower number of
degrees of freedom for this support set, leading to a significantly
more parsimonious model.

\begin{figure}[htbp]
\begin{center}
 \includegraphics[width=\textwidth*1/2]{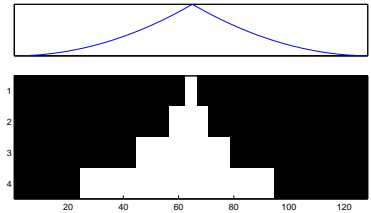}
\caption{The support set for the wavelet coefficients
  of a piecewise quadratic signal using a $J=4$ level shift invariant
  Daubechies wavelet transform with $s=3$ vanishing moments. Scaling
  coefficients are not shown. The support set contains $122$
  coefficients out of a possible $512$, yet the analysis subspace has
  a dimension of only $3$.} 
\label{fig: cone of influence}
\end{center}
\end{figure}

\subsection{Hybrid Analysis/Synthesis models?}

In this section we have demonstrated that while both the cosparse
analysis model and the sparse synthesis model can be described by a
union of subspaces these models are typically very different. We do
not argue that one is inevitably  better than the other. The value of
the model will very much depend on the problem instance. Indeed the
intrinsic difference between the models also suggests that it might be
fruitful to explore building other union of subspace models from hybrid
compositions of analysis and synthesis operators. For example, one
could imagine a signal model where $\x=\Dict \z $ through a redundant
synthesis dictionary but instead of imposing sparsity on $\z$ we
restrict $\z$ through an  additional analysis operator: $\| \Oper \z
\|_0 \leq k$. In such a case there will still be an underlying
union of subspace model but with the subspaces defined by a combination of
atoms and analysis operator constraints. A special case of this is the
split analysis model suggested in \cite{Candes:2010ab}.

\section{Uniqueness Properties}
\label{sec:uniqueness}

In the synthesis model, if a dictionary $\Dict$ is redundant, then
a given signal $\x$ can admit many synthesis
representations $\tilde\z$, i.e., $\tilde\z$ with $\Dict\tilde\z = \x$.
This makes the following type of problem interesting in the context
of the sparse signal recovery: When a signal has a sparse representation
$\z$, can there be another representation that is equally sparse or sparser?
This problem is
well-understood in terms of the so-called \emph{spark} of $\Dict$
\cite{donohoElad}, the smallest number of columns from $\Dict$ that
are linearly dependent. 

Unlike in the synthesis model, if the signal is  known, then its
analysis representation $\Oper \x$ with respect to an analysis operator $\Oper$
is completely determined. Hence, there is no inherent question of
uniqueness for the cosparse analysis model. The uniqueness question
we want to consider in this paper is in the context of the noiseless
linear inverse problem,
\begin{equation}
\label{eq:PartialMeasure}
\y = \Meas \x,
\end{equation}
where $\Meas \in \RR^{\mdim \times \sdim}$, and $\mdim < \sdim$,
implying that the measurement vector $\y \in \RR^\mdim$ is not
sufficient to fully characterize the original signal $\x\in
\RR^\sdim$. For this problem we ask: when can we assert that a
solution $\x$ with cosparsity $\cosparsity$ is the only solution
with that cosparsity or more? The problem~\eqref{eq:PartialMeasure}
(especially, with additive noise) arises ubiquitously in many
applications, and we shall focus on this problem throughout this
paper as a platform for introducing the cosparse analysis model, its
properties and behavior. Not to complicate matters unnecessarily, we
assume that all the rows of $\Meas$ are linearly independent, and we omit noise, leaving robustness analysis to further work.

For completeness of our discussion, let us return for a moment to the
synthesis model and consider the uniqueness property for the inverse
problem posed in Equation~(\ref{eq:PartialMeasure}). Assuming that
the signal's sparse representation satisfies $\x = \Dict \z$, we
have that $\y = \Meas \x = \Meas \Dict \z$. Had we known the support $\supp$
of $\z$, this linear system would have reduced to $\y =
\Meas \Dict_\supp \z_\supp$, a system of $\mdim$ equations with
$\sparsity$ unknowns. Thus, recovery of $\x$ from $\y$ is possible
only if $\sparsity \le \mdim$.

When the support of $\z$ is unknown, it is the \emph{spark} of the
compound matrix $\Meas \Dict$ that governs whether the cardinality
of $\z_\supp$ is sufficient to ensure uniqueness -- if $\sparsity=\| \z
\|_0$ is smaller than half the \emph{spark} of $\Meas \Dict$, then
necessarily $\z$ is the signal's sparsest representation. At best,
$spark(\Meas \Dict)=\mdim+1$, and then we require that the number of
measurements is at least twice the cardinality $\sparsity$. Put
formally, we require
\begin{equation}
\label{eq:Synthesis-Unique} \sparsity = \| \z \|_0 < \frac{1}{2}
spark(\Meas \Dict) \le \frac{\mdim +1}{2}.
\end{equation}
It will be interesting to contrast this requirement with the one we
will derive hereafter for the analysis model.

\subsection{Uniqueness When the Cosupport is Known}
\label{sec:uniquenessCosupportKnown}

Before we tackle the uniqueness problem for the analysis model, let
us consider an easier question: Given the observations $\y$ obtained
via a measurement matrix $\Meas$, and assuming that the cosupport
$\cosupp$ of the signal $\x$ is known, what are the sufficient
conditions for the recovery of $\x$? The answer to
this question is straightforward since $\x$ satisfies the linear
equation
\begin{equation}
\label{eq:cosparseRecoveryFormula}
\begin{bmatrix} \y \\ \mathbf{0} \\ \end{bmatrix}
= \begin{bmatrix} \Meas \\ \Oper_\cosupp \\ \end{bmatrix} \x=
\mathbf{A} \x.
\end{equation}
To be able to uniquely identify $\x$ from Equation~\eqref{eq:cosparseRecoveryFormula}, 
the matrix $\mathbf{A}$ must have a
zero null space. This is equivalent to the requirement
\begin{equation}
\label{eq:fullKnowledge}
\Null(\Oper_{\cosupp}) \cap \Null(\Meas) = 
\aspace_{\cosupp} \cap \Null(\Meas) = \{0\}.
\end{equation}

Let us now assume that $\Meas$ and $\Oper$ are mutually independent, 
in the sense that there are no nontrivial linear
dependencies among the rows of $\Meas$ and $\Oper$; 
this is a reasonable assumption because first, one should not be
measuring something that may be already available from $\Oper$,
and second, for a fixed $\Oper$, mutual independency holds true
for almost all $\Meas$ (in the Lebesgue measure).
Then, \eqref{eq:fullKnowledge} would be satisfied as soon as
$\vdim(\aspace_{\cosupp}) + \vdim(\Null(\Meas)) \le \sdim$, or
$\vdim(\aspace_{\cosupp}) \le \mdim$, since $\vdim(\Null(\Meas)) = \sdim-\mdim$.
This motivates us to define
\begin{equation}\label{dimmax definition}
\dimmax_{\Oper}(\ell) := \max_{|\cosupp| \ge \ell} ~~
\vdim(\aspace_\cosupp).
\end{equation}

The quantity $\dimmax_{\Oper}(\cosparsity)$ plays an important role in
determining the necessary and sufficient cosparsity level for the
identification of cosparse signals. 
Indeed, under the assumption of the mutual independence of $\Oper$ and $\Meas$,
a necessary and sufficient condition for the uniqueness of \emph{every}
cosparse signal given the knowledge of its cosupport $\cosupp$ of size
$\cosparsity$ is
\begin{equation}
\label{eq:KnownCosupport}
\dimmax_{\Oper}(\cosparsity) \le \mdim.
\end{equation}

\subsection{Uniqueness When the Cosupport is Unknown}
\label{sec:uniquessCosupportUnknown}

The uniqueness question that we answered above refers to the case
where the cosupport is known, but of course, in general this is not
the case. We shall assume that we may only know the cosparsity level
$\cosparsity$, which means that our uniqueness question now becomes:
what cosparsity level $\cosparsity$ guarantees that there can be
only one signal $\x$ matching a given observation $\y$?

As we have seen, the cosparse analysis model is a special case of a
general union of subspaces model. Uniqueness guarantees for missing
data problems  such as~\eqref{eq:PartialMeasure} with general union
of subspace models are covered in
\cite{Lu:2008ab,DBLP:journals/tit/BlumensathD09}. In particular
\cite{Lu:2008ab} shows that $\Meas$ is invertible on the union of
subspaces $\cup_{\gamma \in \Gamma}  S_\gamma$ \emph{if and only if}
$\Meas$ is invertible on all subspaces $S_\gamma +S_\theta$ for all
$\gamma, \theta \in \Gamma$. In the context of the analysis model
this gives the following result whose proof is a direct consequence
of the results in~\cite{Lu:2008ab}:

\begin{Proposition}[\cite{Lu:2008ab}]\label{prop: uniqueness}
Let $\cup_{\cosupp}  \aspace_\cosupp$, $|\cosupp| = \cosparsity$ be
the union of $\cosparsity$-cosparse analysis subspaces induced by
the analysis operator $\Oper$. Then the following statements are
equivalent:
\begin{enumerate}
\item If the linear system $\y = \Meas \x$ admits an $\ell$-cosparse solution, then this is the unique $\ell$-cosparse solution;
\item $\Meas$ is invertible on $\cup_{\cosupp}\aspace_\cosupp$;
\label{prop1 uniq}
\item $(\aspace_{\cosupp_1} + \aspace_{\cosupp_2}) \cap
\Null(\Meas) = 0$ for any $|\cosupp_1|,
~|\cosupp_2| \geq \cosparsity$;\label{prop1 nsp}
\end{enumerate}
\end{Proposition}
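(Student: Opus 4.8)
The plan is to reduce all three statements to a single statement about the injectivity of $\Meas$ on the union $U:=\bigcup_{|\cosupp|\geq\cosparsity}\aspace_\cosupp$ of analysis subspaces, and then to invoke the union-of-subspaces sampling theorem of~\cite{Lu:2008ab} quoted above. The first preparatory step is an elementary monotonicity observation: if $\cosupp\subseteq\cosupp'$ then $\aspace_{\cosupp'}=\vspan(\aatom_j,\ j\in\cosupp')^\perp\subseteq\vspan(\aatom_j,\ j\in\cosupp)^\perp=\aspace_\cosupp$. Two consequences follow. First, a signal has cosparsity at least $\cosparsity$ if and only if it lies in $\aspace_\cosupp$ for some cosupport of size \emph{exactly} $\cosparsity$ (restrict its full zero set to any $\cosparsity$-element subset), so $U=\bigcup_{|\cosupp|=\cosparsity}\aspace_\cosupp$ and ``$\cosparsity$-cosparse'' is synonymous with ``belonging to $U$''. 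Second, enlarging $\cosupp_1$ or $\cosupp_2$ only shrinks $\aspace_{\cosupp_1}+\aspace_{\cosupp_2}$, so the condition in item~(3) need only be verified for $|\cosupp_1|=|\cosupp_2|=\cosparsity$.

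Next I would dispatch the two equivalences. For (1)$\Leftrightarrow$(2): $\Meas$ is invertible on $U$ precisely when $\Meas\x=\Meas\x'$ with $\x,\x'\in U$ forces $\x=\x'$, which is literally the assertion that no measurement $\y$ admits two distinct $\cosparsity$-cosparse preimages, i.e.\ item~(1). For (2)$\Leftrightarrow$(3) I would apply the theorem of~\cite{Lu:2008ab}: $\Meas$ is invertible on $U=\bigcup_\cosupp\aspace_\cosupp$ if and only if it is invertible on every pairwise sum $\aspace_{\cosupp_1}+\aspace_{\cosupp_2}$; and since each such sum is itself a linear subspace, $\Meas$ is invertible on it if and only if $(\aspace_{\cosupp_1}+\aspace_{\cosupp_2})\cap\Null(\Meas)=\{0\}$. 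The one non-routine point is the ``only if'' half of this last equivalence: given a nonzero $\w\in(\aspace_{\cosupp_1}+\aspace_{\cosupp_2})\cap\Null(\Meas)$, write $\w=\u+\v$ with $\u\in\aspace_{\cosupp_1}$ and $\v\in\aspace_{\cosupp_2}$; then $\u\neq-\v$ whereas $\Meas\u=\Meas(-\v)$ since $\Meas\w=0$, and $-\v\in\aspace_{\cosupp_2}$ because it is a subspace, so $\u$ and $-\v$ are two distinct points of $U$ with the same image, contradicting invertibility. Combining these observations with the quantifier reduction of the first paragraph yields (2)$\Leftrightarrow$(3), which together with (1)$\Leftrightarrow$(2) establishes the equivalence of all three items.

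I do not anticipate a genuine obstacle here: the argument is bookkeeping layered on top of the cited theorem of~\cite{Lu:2008ab}. The two places that demand a little care are (i) keeping the quantifiers aligned, i.e.\ moving freely between ``$|\cosupp|\geq\cosparsity$'' and ``$|\cosupp|=\cosparsity$'' in all three items by means of the monotonicity of $\cosupp\mapsto\aspace_\cosupp$; and (ii) remembering that ``$\Meas$ invertible on a union'' means injective \emph{across} the whole union, not merely injective on each subspace separately --- which is exactly why item~(3) involves the pairwise sums $\aspace_{\cosupp_1}+\aspace_{\cosupp_2}$ rather than the weaker conditions $\aspace_\cosupp\cap\Null(\Meas)=\{0\}$ taken one cosupport at a time.
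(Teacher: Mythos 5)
Your proposal is correct and follows essentially the same route as the paper, which simply observes that the equivalence is a direct consequence of the union-of-subspaces sampling theorem of~\cite{Lu:2008ab}; your additional bookkeeping (the monotonicity of $\cosupp\mapsto\aspace_\cosupp$, the reduction of invertibility on a subspace to trivial intersection with $\Null(\Meas)$, and the explicit $\u$, $-\v$ argument) correctly fills in the details the paper leaves implicit.
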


\noindent Proposition~\ref{prop: uniqueness} answers the question of
uniqueness for cosparse signals in the context of linear inverse
problems. Unfortunately, the answer we obtained still leaves us in
the dark in terms of the necessary cosparsity level or necessary
number of measurements. In order to pose a clearer condition, we use
Proposition~\ref{prop: uniqueness} from \cite{Lu:2008ab} that poses
a sharp condition on the number of measurements to guarantee
uniqueness (when $\Meas$ and $\Oper$ are mutually independent):
\begin{equation} \label{eq:nscWithF} 
\mdim \geq
\tilde{\dimmax}_{\Oper}(\cosparsity),\quad\mbox{where}\ \tilde{\dimmax}_{\Oper}(\cosparsity) := \max \left\{
\vdim(\aspace_{\cosupp_{1}}+\aspace_{\cosupp_{2}}) \ :\
|\cosupp_{i}| \geq \cosparsity, i=1,2\right\}
\end{equation}
Interestingly, a sufficient condition can also be obtained using the
quantity $\dimmax_\Oper$ defined in \eqref{dimmax definition} above,
which was observed to play 
an important role in the uniqueness result when the cosupport is
assumed to be known. Namely, we have the following result.

\begin{Proposition}\label{prop: F-L relation}
Assume that $\dimmax_{\Oper}(\cosparsity) \le \frac{\mdim}{2}$. Then for almost all $\Meas$ (wrt the Lebesgue measure), the linear inverse problem $\y =
\Meas\x$ has at most one $\cosparsity$-cosparse solution.
\end{Proposition}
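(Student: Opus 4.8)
The plan is to reduce the statement to the uniqueness criteria already in hand — Proposition~\ref{prop: uniqueness} and the sharp measurement count~\eqref{eq:nscWithF} — via an elementary dimension estimate, and then to settle the ``for almost all $\Meas$'' clause by a standard genericity argument.

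First I would invoke Proposition~\ref{prop: uniqueness}: the problem $\y=\Meas\x$ has at most one $\cosparsity$-cosparse solution precisely when $(\aspace_{\cosupp_1}+\aspace_{\cosupp_2})\cap\Null(\Meas)=\{0\}$ for every pair of cosupports with $|\cosupp_1|,|\cosupp_2|\ge\cosparsity$. The crux is the crude bound, valid for any such pair,
\[
\vdim(\aspace_{\cosupp_1}+\aspace_{\cosupp_2}) \le \vdim(\aspace_{\cosupp_1}) + \vdim(\aspace_{\cosupp_2}) \le 2\,\dimmax_{\Oper}(\cosparsity) \le \mdim,
\]
where the middle step uses the definition~\eqref{dimmax definition} of $\dimmax_{\Oper}$ (both cosupports have size at least $\cosparsity$) and the last step is the hypothesis. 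Consequently $\tilde{\dimmax}_{\Oper}(\cosparsity)\le\mdim$, which is exactly the sufficient condition~\eqref{eq:nscWithF} for uniqueness whenever $\Meas$ and $\Oper$ are mutually independent.

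It then remains to replace ``$\Meas$ mutually independent of $\Oper$'' by ``almost every $\Meas$''. One may simply combine~\eqref{eq:nscWithF} with the observation (made just after~\eqref{eq:fullKnowledge}) that, for fixed $\Oper$, mutual independence holds for Lebesgue-almost every $\Meas$. Alternatively — and more self-containedly — I would argue directly: fix a pair $(\cosupp_1,\cosupp_2)$ with $|\cosupp_i|\ge\cosparsity$ and put $U:=\aspace_{\cosupp_1}+\aspace_{\cosupp_2}$, a fixed subspace of $\RR^\sdim$ with $\vdim(U)\le\mdim$ by the bound above. Since the rows of $\Meas$ are assumed linearly independent, $\vdim\Null(\Meas)=\sdim-\mdim$, and $U\cap\Null(\Meas)\neq\{0\}$ holds exactly when the restriction $\Meas|_{U}\colon U\to\RR^{\mdim}$ fails to be injective; in a fixed basis of $U$ this is the simultaneous vanishing of all $\vdim(U)\times\vdim(U)$ minors of the $\mdim\times\vdim(U)$ matrix representing $\Meas|_{U}$, a polynomial condition on the entries of $\Meas$. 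The resulting set is a proper algebraic subset of $\RR^{\mdim\times\sdim}$ — proper because $\vdim(U)\le\mdim\le\sdim$ makes full rank attainable — hence Lebesgue-null. Since there are only finitely many cosupport pairs, the union of these exceptional sets is still null; for every $\Meas$ in its complement, condition~\ref{prop1 nsp} of Proposition~\ref{prop: uniqueness} holds for all pairs, and the claimed uniqueness follows.

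I do not anticipate a genuine obstacle: the argument is a one-line dimension count followed by a routine ``generic full rank'' statement. The only point requiring mild care is the bookkeeping in the last step — quantifying over the finitely many cosupport pairs of size at least $\cosparsity$ before forming the measure-zero union, and verifying that each exceptional algebraic set is genuinely proper rather than all of $\RR^{\mdim\times\sdim}$.
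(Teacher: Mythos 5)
Your argument is correct and follows essentially the same route as the paper's proof: both reduce uniqueness via Proposition~\ref{prop: uniqueness} to the condition $\vdim(\aspace_{\cosupp_1}+\aspace_{\cosupp_2})\le\mdim$ under mutual independence of $\Meas$ and $\Oper$, and then apply the subadditivity bound $\vdim(\aspace_{\cosupp_1}+\aspace_{\cosupp_2})\le 2\,\dimmax_{\Oper}(\cosparsity)\le\mdim$. Your only addition is to spell out, via a finite union of proper algebraic (hence Lebesgue-null) exceptional sets, why the ``almost all $\Meas$'' clause holds --- a step the paper simply asserts --- which is a welcome but not substantively different elaboration.
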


\begin{proof}
Assuming the mutual independence of $\Oper$ and $\Meas$, which holds
for almost all $\Meas$, we note that  the uniqueness of $\cosparsity$ cosparse solutions holds if and only if:
\(
\dim \left(\aspace_{\cosupp_{1}} + \aspace_{\cosupp_{2}}\right) \le \mdim,
\)
whenever $|\cosupp_{i}| \geq \cosparsity$, $i=1,2$.
Assume that $\dimmax_{\Oper}(\cosparsity) \le \mdim/2$. By definition of $\dimmax_{\Oper}$, if $|\cosupp_i| \geq \cosparsity$, $i=1,2$, then
\(
\vdim(\aspace_{\cosupp_{i}}) \leq \frac{\mdim}{2},
\)
hence
\(
\vdim\left(\aspace_{\cosupp_{1}} +
\aspace_{\cosupp_{2}}\right) \leq \mdim.
\)
\end{proof}

In the synthesis model the degree to which columns are
interdependent can be partially characterized by the \emph{spark} of
$\Dict$ \cite{donohoElad} defined as the the smallest number of
columns of $\Dict$ that are linearly dependent. Here the function
$\dimmax_{\Oper}$ plays a similar role in
quantifying the interdependence between rows in the analysis model.

\begin{remark}
\label{re:Nonsharpuniqueness}
The condition
 $\dimmax_{\Oper}(\cosparsity) \le \frac{\mdim}{2}$
is in general
not necessary while condition~\eqref{eq:nscWithF}
is.
\end{remark}

There are two classes of analysis operators for which the function $\dimmax_{\Oper}$
is well-understood: analysis operators in general position and the finite difference operators.
We discuss the uniqueness results for these two classes in the following subsections.

\subsection{Analysis Operators in General Position}

It can be easily checked that
$\dimmax_{\Oper}(\cosparsity) = \max(\sdim-\cosparsity,0)$.
This enables us to quantify the exact level of cosparsity
necessary for the uniqueness guarantees:
\begin{corollary}\label{cor: gen pos uniqueness}
Let $\Oper\in\RR^{\pdim\times\sdim}$ be an analysis operator \emph{in general position}.
Then, for almost all $\mdim \times \sdim$ matrix $\Meas$, the following hold:
\begin{itemize}
\item Based on Eq.~(\ref{eq:KnownCosupport}), if $\mdim \geq \sdim-\cosparsity$,
then the equation $\y = \Meas \x$ has at most one solution with known cosupport $\cosupp$ (of cosparsity at least $\cosparsity$);
\item Based on Proposition~\ref{prop: uniqueness}, if $\mdim \geq 2(\sdim-\cosparsity)$, 
then the equation $\y = \Meas \x$ has at most one solution with cosparsity at least $\cosparsity$.
\end{itemize}
\end{corollary}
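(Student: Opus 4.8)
The plan is to reduce the corollary to a single computation --- the value of $\dimmax_{\Oper}(\cosparsity)$ for an operator in general position --- after which both bullets follow by substitution into criteria already in hand: Equation~\eqref{eq:KnownCosupport} for the known-cosupport case, and Proposition~\ref{prop: F-L relation} for the unknown-cosupport case.

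First I would unfold the definitions. Recall that $\aspace_\cosupp = \Null(\Oper_\cosupp)$ and $\dimmax_{\Oper}(\ell) = \max_{|\cosupp| \ge \ell} \vdim(\aspace_\cosupp)$. Fix a cosupport $\cosupp$ with $|\cosupp| = c$. Because $\Oper$ is in general position --- equivalently, the spark of $\Oper^T$ is full, so any collection of at most $\sdim$ rows of $\Oper$ is linearly independent --- we have $\rank(\Oper_\cosupp) = \min(c,\sdim)$, and hence by rank--nullity $\vdim(\aspace_\cosupp) = \sdim - \min(c,\sdim) = \max(\sdim - c, 0)$. In particular this value depends only on $c$ and is nonincreasing in $c$, so the maximum over all $|\cosupp| \ge \cosparsity$ is attained at $|\cosupp| = \cosparsity$, giving $\dimmax_{\Oper}(\cosparsity) = \max(\sdim - \cosparsity, 0)$.

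Then I would simply insert this into the two criteria. For the known-cosupport case, Equation~\eqref{eq:KnownCosupport} asserts (for almost all $\Meas$, where mutual independence of the rows of $\Meas$ and $\Oper$ is invoked) that uniqueness given $\cosupp$ holds iff $\dimmax_{\Oper}(\cosparsity) \le \mdim$; since $\max(\sdim-\cosparsity,0) \le \mdim$ is equivalent to $\mdim \ge \sdim - \cosparsity$, the first bullet follows. For the unknown-cosupport case, Proposition~\ref{prop: F-L relation} guarantees at most one $\cosparsity$-cosparse solution for almost all $\Meas$ whenever $\dimmax_{\Oper}(\cosparsity) \le \mdim/2$, i.e.\ whenever $\mdim \ge 2(\sdim - \cosparsity)$, which is the second bullet.

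The argument is essentially routine; the only point that needs care is letting the general-position hypothesis do its work precisely --- namely that it pins down $\rank(\Oper_\cosupp) = \min(|\cosupp|,\sdim)$ for \emph{every} $\cosupp$, including the regime $|\cosupp| > \sdim$ where the analysis subspace collapses to $\{0\}$ --- and keeping track that both ``almost all $\Meas$'' qualifications stem from the same generic (full-measure) condition on $\Meas$: that its rows carry no nontrivial linear dependency with those of $\Oper$.
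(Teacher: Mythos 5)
Your argument is correct and is essentially the route the paper takes: the paper dispatches the corollary by noting that $\dimmax_{\Oper}(\cosparsity)=\max(\sdim-\cosparsity,0)$ for operators in general position (which you justify carefully via $\rank(\Oper_{\cosupp})=\min(|\cosupp|,\sdim)$) and then substituting into the known-cosupport criterion and the $\dimmax_{\Oper}(\cosparsity)\le\mdim/2$ sufficient condition of Proposition~\ref{prop: F-L relation}. Your spelling out of the $|\cosupp|>\sdim$ regime and of the common ``almost all $\Meas$'' genericity is a welcome precision the paper leaves implicit.
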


\subsection{The Finite Difference Operator}
\label{sec:FDOper}
An interesting class of analysis operators with significant linear
dependencies is the family of finite difference operators on graphs,
$\Oper_{\mathrm{DIF}}$. These are 
strongly related to TV norm minimization, popular in image processing
applications~\cite{ROF92}, and has the added benefit that we are able to quantify
the function $\dimmax_{\Oper}$ and hence the uniqueness properties of
the cosparse signal model under $\Oper_{\mathrm{DIF}}$.

We begin by considering $\Oper_{\mathrm{DIF}}$ on an arbitrary graph
before restricting our discussion to the 2D lattice associated with
image pixels. 
Consider a non-oriented graph with vertices $V$ and  edges $E
\subset V^{2}$. An edge $e$ is a pair $e = (v_{1},v_{2})$ of
connected vertices. For any vector of coefficients defined on the
vertices, $\x \in \RR^{V}$, the finite difference analysis operator
$\Oper_{\mathrm{DIF}}$ computes the collection of differences
$(x(v_{1})-x(v_{2}))$ between end-points, for all edges in the
graph. Thus, an edge $e\in E$ may be viewed as a finite difference
on $\RR^V$.

Can we estimate the function
$\dimmax_{\Oper_{\mathrm{DIF}}}(\cosparsity)$? The following shows
that it is intimately related to topological properties of the
graph. For each sub-collection  $\cosupp \subset E$ of edges, we can
define its vertex-set $V(\cosupp) \subset V$ as the collection of
vertices covered by at least one edge in $\cosupp$. The support set
$V(\cosupp)$ of $\cosupp$ can be decomposed into $J(\cosupp)$
connected components (a connected component is a set of vertices
connected to one another by a walk through vertices in $\cosupp$).
It is easy to check that a vector $\x$ belongs to the space
$\aspace_{\cosupp} = \Null(\Oper_{\cosupp})$ if and only if its
values are constant on each connected component. As a result, the
dimension of this subspace is given by
\[
\vdim (\aspace_{\cosupp})
=
|V|-|V(\cosupp)| + J(\cosupp)
\]
where the $|V|-|V(\cosupp)|$ vertices out of $V$ are associated to
arbitrary values in $\x$ that are distinct from all their neighbors,
while all entries from each of the 
$J(\cosupp)$ connected components have an arbitrary common value. It
follows that 
\begin{equation}
\label{eq:dimmaxGeneralGraph}
\dimmax_{\Oper}(\cosparsity)
= \max_{|\cosupp| \geq \cosparsity}
\Big\{|V|-|V(\cosupp)|+J(\cosupp)\Big\}
= |V|- \min_{|\cosupp| \geq \cosparsity}
\Big\{|V(\cosupp)|-J(\cosupp)\Big\}
\end{equation}
Because of the nesting of the subspaces $\aspace_{\cosupp}$, the minimum on the right hand side is achieved when $|\cosupp| = \cosparsity$.

\paragraph{Uniqueness Condition for Cosparse Images with respect to the 2D $\Oper_{\mathrm{DIF}}$}
In the abstract context of general graph the characterization~\eqref{eq:dimmaxGeneralGraph} may remain obscure, 
but can we get more concrete estimates by specializing to the 2D regular graph associated to the pixels of an $N \times N$ image?
It turns out that one can obtain relatively simple
upper and lower bounds for $\dimmax_{\Oper_{\mathrm{DIF}}}$ and hence derive an easily interpretable
uniqueness condition (see \ref{sec:proofsGraph} for a proof):
\begin{Proposition}
\label{thm:Uniqueness2DTV}
Let $\Oper_{\mathrm{DIF}}$ be the finite difference analysis operator that computes horizontal and vertical discrete derivatives of a $\sdim = N \times N$ image.  For any $\cosparsity$ we have
\begin{equation}
\label{eq:DimMaxBoundsDIF}
\sdim - \frac{\cosparsity}{2} - \sqrt{\frac{\cosparsity}{2}} -1
\le \dimmax_{\Oper_{\mathrm{DIF}}}(\cosparsity) \leq \sdim - \frac{\cosparsity}{2}.
\end{equation}
As a result, assuming that $\Meas$ is 'mutually independent' from $\Oper_{\mathrm {DIF}}$, we have:
 \begin{itemize}
\item Based on Eq.~(\ref{eq:KnownCosupport}), if $\mdim \geq \sdim-\cosparsity/2$, that is to say
\begin{equation}
\label{eq:scUniqueness2DTVKnownCosupp}
\cosparsity \geq 2\sdim-2\mdim,
\end{equation}
then the equation $\y = \Meas \x$ has at most one solution with known cosupport $\cosupp$ (of cosparsity at least $\cosparsity$);
\item Based on Proposition~\ref{prop: uniqueness}, if $\mdim \geq 2(\sdim-\cosparsity/2) = 2\sdim-\cosparsity$, that is to say
\begin{equation}
\label{eq:scUniqueness2DTV}
\cosparsity \geq 2\sdim-\mdim,
\end{equation}
then the equation $\y = \Meas \x$ has at most one solution with cosparsity at least $\cosparsity$.
\end{itemize}
\end{Proposition}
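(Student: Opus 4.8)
The plan is to translate the statement into a small combinatorial problem on the $N\times N$ pixel grid $G=(V,E)$ (so $|V|=\sdim=N^{2}$) and then bound the relevant extremal quantity from both sides. By the characterisation~\eqref{eq:dimmaxGeneralGraph} and the nesting remark that follows it, $\dimmax_{\Oper_{\mathrm{DIF}}}(\cosparsity)=\sdim-g(\cosparsity)$, where $g(\cosparsity):=\min\{\,|V(\cosupp)|-J(\cosupp):\cosupp\subset E,\ |\cosupp|=\cosparsity\,\}$. Hence~\eqref{eq:DimMaxBoundsDIF} is equivalent to the two-sided bound $\tfrac{\cosparsity}{2}\le g(\cosparsity)\le\tfrac{\cosparsity}{2}+\sqrt{\tfrac{\cosparsity}{2}}+1$. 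Writing $\cosupp$ as the disjoint union of its connected components $C_{1},\dots,C_{J(\cosupp)}$, with $C_{i}$ carrying $v_{i}$ vertices and $e_{i}$ edges, we have $\sum_{i}e_{i}=\cosparsity$ and $|V(\cosupp)|-J(\cosupp)=\sum_{i}(v_{i}-1)$, so both inequalities reduce to per-component estimates.

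For the lower bound on $g$ (equivalently, the upper bound $\dimmax_{\Oper_{\mathrm{DIF}}}(\cosparsity)\le\sdim-\cosparsity/2$) I would use that $G$, and hence each $C_{i}$, is a \emph{simple planar bipartite} graph. For $v_{i}\ge 3$ the Euler-formula bound for bipartite planar graphs gives $e_{i}\le 2v_{i}-4$, whence $v_{i}-1\ge e_{i}/2+1\ge e_{i}/2$; for $v_{i}=2$ one has $e_{i}\le 1$, so $v_{i}-1=1\ge e_{i}/2$, and $v_{i}=1$ cannot occur since every vertex of $V(\cosupp)$ lies on an edge of $\cosupp$. Summing, $|V(\cosupp)|-J(\cosupp)=\sum_{i}(v_{i}-1)\ge\tfrac12\sum_{i}e_{i}=\cosparsity/2$ for every admissible $\cosupp$, which bounds $g(\cosparsity)$ from below.

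For the upper bound on $g$ (the lower bound on $\dimmax_{\Oper_{\mathrm{DIF}}}$) I would exhibit an explicit near-optimal cosupport. Take an $a\times a$ square block of pixels, where $a$ is the largest integer with $2a^{2}-2a\le\cosparsity$, and enlarge it by appending pixels along the adjacent column — and, if still needed, the adjacent row — one at a time, each time keeping exactly enough of that pixel's grid edges (one or two of them, but at least one so the subgraph stays connected) so that the total edge count reaches $\cosparsity$ on the nose. This yields a connected $\cosupp$ with $|\cosupp|=\cosparsity$, $J(\cosupp)=1$, and $|V(\cosupp)|=a^{2}+O(\sqrt{\cosparsity})$: the number $r:=\cosparsity-(2a^{2}-2a)$ of extra edges satisfies $r<4a$ by maximality of $a$, and the appended pixels number about $r/2$. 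Since $2a^{2}-2a\le\cosparsity$ forces $a\le\tfrac12+\sqrt{\cosparsity/2+1/4}\le 1+\sqrt{\cosparsity/2}$ and $a^{2}=\cosparsity/2-r/2+a$, a short computation — using $\sqrt{x+y}\le\sqrt{x}+\sqrt{y}$ to tidy the radical — bounds $|V(\cosupp)|-1$ by $\cosparsity/2+\sqrt{\cosparsity/2}+1$. One corner deserves separate mention: if $\cosparsity$ is so large (close to $\pdim=2N^{2}-2N$) that the claimed lower bound on $\dimmax$ is already nonpositive, there is nothing to prove since $\dimmax\ge 0$; and the same estimate on $a$ guarantees that in all other cases the block and the appended pixels genuinely fit inside the $N\times N$ image. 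I expect this direction to be the main obstacle: it is a discrete-isoperimetry statement — a connected grid subgraph with $E$ edges needs about $E/2+\sqrt{E/2}$ vertices, nearly met by square blocks — and keeping the appended-pixel count within the advertised $\sqrt{\cosparsity/2}+1$ slack for \emph{every} value of $\cosparsity$, not just those of the form $2a^{2}-2a$, is where the integer/parity bookkeeping gets delicate.

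Finally, the two uniqueness assertions follow by feeding the upper bound $\dimmax_{\Oper_{\mathrm{DIF}}}(\cosparsity)\le\sdim-\cosparsity/2$ into the general results of Section~\ref{sec:uniqueness}. Substituting into the known-cosupport condition~\eqref{eq:KnownCosupport} ($\dimmax\le\mdim$) shows it is met as soon as $\mdim\ge\sdim-\cosparsity/2$, i.e.\ $\cosparsity\ge 2(\sdim-\mdim)$; substituting into Proposition~\ref{prop: F-L relation} ($\dimmax\le\mdim/2$ suffices) gives the unknown-cosupport guarantee as soon as $\mdim\ge 2(\sdim-\cosparsity/2)=2\sdim-\cosparsity$, i.e.\ $\cosparsity\ge 2\sdim-\mdim$ — exactly the stated conditions.
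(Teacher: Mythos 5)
Your proposal is correct, and for the key upper bound $\dimmax_{\Oper_{\mathrm{DIF}}}(\cosparsity) \leq \sdim - \cosparsity/2$ it takes a genuinely different route from the paper. The paper first proves that the minimum of $|V(\cosupp)|-J(\cosupp)$ is attained by a \emph{connected} $\cosupp$ (Lemma~\ref{thm:oneComponent}, via a shifting argument that translates one component until it touches another and then repairs the edge count), and only then applies a degree-counting bound (Lemma~\ref{thm:cornerGain}: maximum degree $4$ plus two extremal ``corner'' vertices of degree at most $2$) to get $|V(\cosupp)| \geq |\cosupp|/2+1$ for a single component. Your per-component argument via the Euler bound $e_i \leq 2v_i - 4$ for connected simple planar bipartite graphs on $v_i\geq 3$ vertices (with the $v_i=2$ case checked separately) delivers $v_i - 1 \geq e_i/2$ directly for \emph{every} component, so the reduction to connected cosupports is never needed; this is shorter and avoids the somewhat delicate component-merging step, at the cost of invoking planarity/bipartiteness of the grid rather than purely local degree counting. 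For the lower bound your construction (largest $a$ with $2a(a-1)\leq\cosparsity$, then appended pixels absorbing the remainder $r<4a$) is the same family of near-square examples as the paper's Lemma~\ref{thm:lowerbound2dTV}, just with a single parametrization in place of the paper's four arithmetic cases, and your arithmetic ($a \leq 1+\sqrt{\cosparsity/2}$, $j \leq r/2+1$) lands on the same constant; you are also slightly more careful than the paper in noting that the construction must fit inside the $N\times N$ image and that very small or very large $\cosparsity$ are trivial. The final deduction of the two uniqueness statements from Eq.~\eqref{eq:KnownCosupport} and Proposition~\ref{prop: F-L relation} is identical to the paper's.
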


Note that as soon as the matrix $\Meas$ is associated to an
underdetermined linear system, i.e., when $\mdim < \sdim$, we need $\cosparsity \ge 2\sdim-\mdim > \sdim$ to exploit the
uniqueness guarantee~\eqref{eq:scUniqueness2DTV}.

\paragraph{The 2D $\Oper_{\mathrm{DIF}}$, Piecewise Constant
  Images, and the TV norm}

The 2D finite difference operator is closely related to the TV norm~\cite{ROF92}: the discrete TV norm of $\x$ is essentially a mixed $\ell^{2}-\ell^{1}$ norm of $\Oper_{\textrm{DIF}}\x$.
Just like its close cousin TV norm minimization, the minimization of $\|\Oper \x\|_{0}$ is particularly good at inducing piecewise constant images. We illustrate this through a worked example.

Consider the popular Shepp Logan phantom image shown in left hand
side of Figure~\ref{fig: piecewise constant images}. This particular
image has $14$ distinct connected regions of constant intensity. 
The number of non-zero coefficients in the finite difference
representation is determined by the total length (Manhattan distance)
of the boundaries between these regions. For the Shepp Logan phantom
this length is $2546$ pixel widths and thus the cosparsity is $\cosparsity = 130560-2546 =
128014$.  Furthermore, as there are no isolated pixels
with any other intensity, all pixels belong to a constant
intensity region so that
$|V(\cosupp)|=|V|$ and the cosupport has an associated subspace
dimension of:  
\begin{align*}
\vdim (\aspace_{\cosupp}) &= (|V|-|V(\cosupp)|) + J(\cosupp)\\
&= 14
\end{align*}

\begin{figure}[htbp]
\centering
\includegraphics[width=\textwidth*2/5]{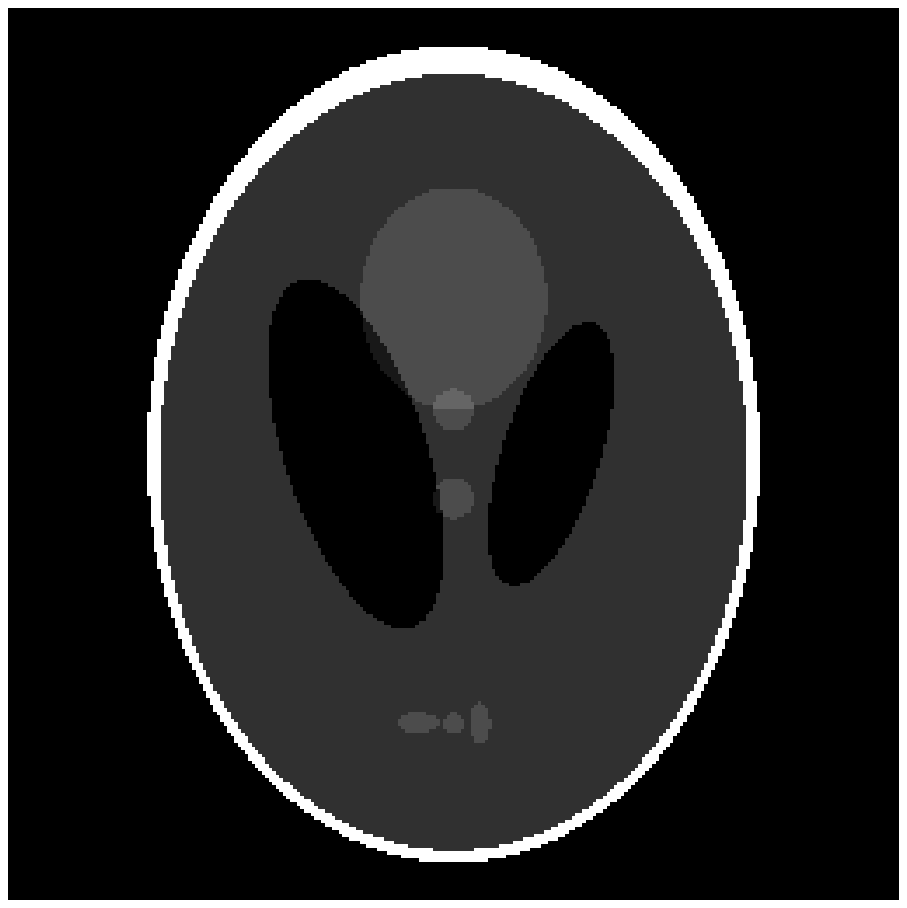}
\includegraphics[width=\textwidth*2/5]{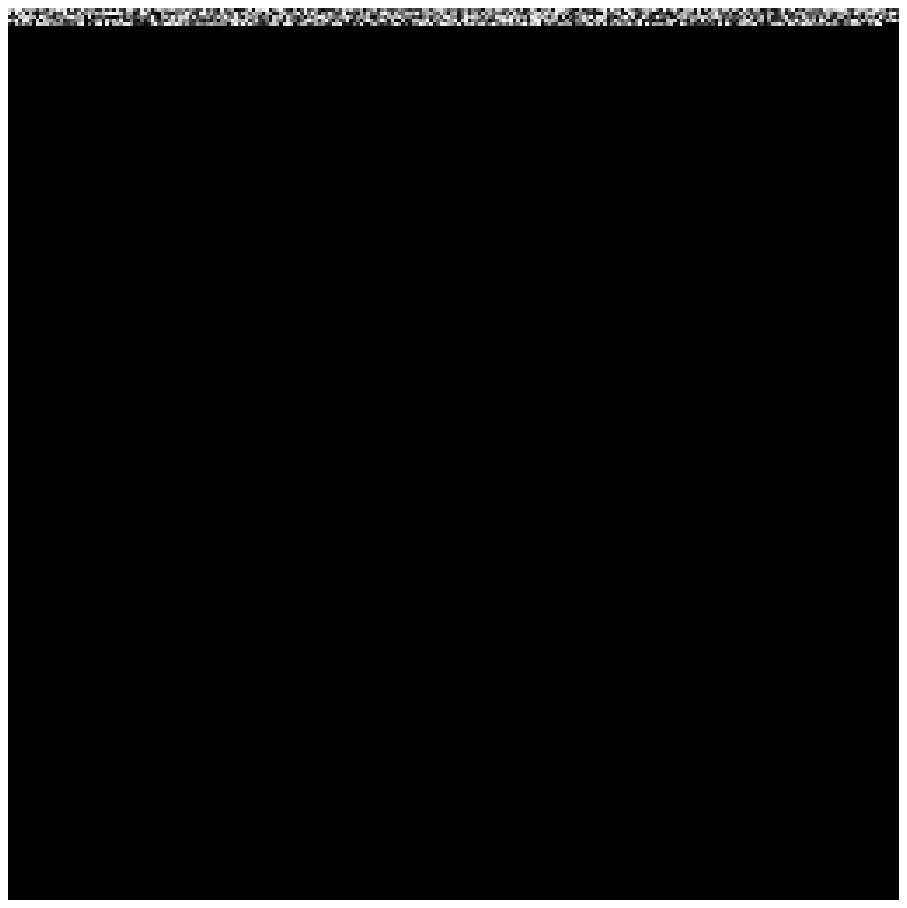}
\caption{An example of a piecewise constant image: the $256\times256$
  Shepp Logan phantom (left); and an image with the same cosparsity,
  $\cosparsity = 128014$, but whose cosupport is associated with an
  empirically maximum subspace dimension (right).}
\label{fig: piecewise constant images}
\end{figure}

In order to determine when the Shepp Logan image is the unique
solution to $\y = \Meas \x$ with maximum cosparsity it is necessary
to consider the maximum subspace dimension of all possible support sets with the
same cosparsity. This is the quantity measured by
$\dimmax_{\Oper_{\mathrm{DIF}}}(\cosparsity)$. The right hand image in
Figure~\ref{fig: piecewise 
  constant images} shows an image with equal copsparsity but 
whose support is
associated with the highest dimensional subspace we could find:
$\vdim (\aspace_{\cosupp})= 1276$. Comparing this to the bounds
given in \eqref{eq:DimMaxBoundsDIF} of Proposition~\ref{thm:Uniqueness2DTV} 
\[1270 \le \dimmax_{\Oper_{\mathrm{DIF}}}(\cosparsity) \le 1524,\] 
suggests that the lower bound is reasonably tight in this instance. 
Note, as explained in \ref{sec:proofsGraph}, this image has a
single connected subgraph, $\cosupp$, which is nearly square.
The uniqueness result from
Proposition~\ref{thm:Uniqueness2DTV} then tells us that a sufficient number
of measurements to uniquely determine the Shepp Logan 
image is given by $\mdim = 2
\dimmax_{\Oper_{\mathrm{DIF}}}(128014)$ which is somewhere
between $2552$ (if our empirical estimate is accurate) and $3048$
(worst case).

We will revisit this again in Subsection~\ref{sec:csrecovery} where we
investigate the empirical recovery performance of some practical
reconstruction algorithms.

\subsection{Overview of cosparse {\em vs} sparse models for inverse problems}
To conclude this section, Figure~\ref{fig:sketchAvsS} provides a schematic overview of analysis cosparse models {\em vs} synthesis sparse models in the context of linear inverse problems such as compressed sensing.
\begin{figure}[ht]
\begin{center}
\includegraphics[width=\textwidth*3/4]{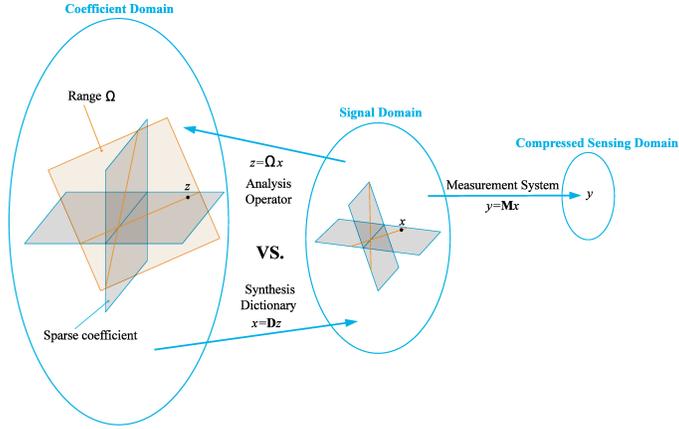}
\caption{A schematic overview of analysis cosparse {\em vs}
synthesis sparse models in relation with compressed sensing. } \label{fig:sketchAvsS}
\end{center}
\end{figure}
In the
synthesis model, the signal $\x$ is a projection (through the dictionary $\Dict$) of a high-dimensional vector $\z$
living in the union of sparse coefficient subspaces; in the analysis model, the signal lives in the pre-image by the analysis
operator $\Oper$ of the intersection between the range of $\Oper$ and this union of subspaces. 
For a given sparsity of $\z$, this is usually a set of much smaller dimensionality.

\section{Pursuit algorithms}
\label{sec:algorithm}
\newcommand{\real}{\ensuremath{\mathbb{R}}}

Having a theoretical foundation for the uniqueness of the problem
\begin{eqnarray}\label{eq:AnalInpainting1}
{\hat \x}  = \arg\min_{\x}~\|\Oper \x\|_0 ~~\mbox{subject to}~~\Meas \x = \y,
\end{eqnarray}
we turn now to the question of how to solve it: algorithms.
In this section we present two algorithms, both targeting the solution 
of problem \eqref{eq:AnalInpainting1}.
As in the uniqueness discussion, we assume that $\Meas\in \real^{\mdim \times \sdim}$, where  $\mdim < \sdim$. This implies that the equation $\Meas \x = \y$ has infinitely many possible solutions, and the term $\|\Oper \x\|_0$ introduces the analysis model to regularize the problem.

The first algorithm we present, the analysis $\ell_1$-minimization,
is well-known and widely used already in practice, see e.g.~\cite{JLInpainting,DBLP:books/daglib/0025275}.
The other algorithm we discuss is a variant of well-known greedy pursuit algorithm used for the synthesis model -- the Orthogonal Matching Pursuit (OMP) algorithm. Similar to the synthesis case, our goal is to detect the informative support of $\Oper \x$ -- 
as discussed in Section \ref{sec:uniquenessCosupportKnown},
in the analysis case, this amounts to the locations of the zeros in the vector $\Oper \x$, so as to introduce additional constraints to the underdetermined system $\Meas \x = \y$. Note that for obtaining a solution, one needs to detect at least $\sdim - \mdim $ of these zeros, and thus if $\cosparsity> \sdim - \mdim$, detection of the complete set of zeros is not mandatory. 
Of course, there can be many more possibilities to solve \eqref{eq:AnalInpainting1}
or to find approximate solutions of it. We mention a few works where some of such methods can be
found: \cite{portilla09:analysis,selesnick09:signalrestoration,DBLP:journals/mmas/CaiOS09}.

\subsection{The Analysis $\ell_1$-minimization}
Solving \eqref{eq:AnalInpainting1} can be quite difficult. In fact, the synthesis counterpart
of \eqref{eq:AnalInpainting1} is known to be NP-hard in general.
As is well-known, a very effective way to remedy this situation is to modify \eqref{eq:AnalInpainting1} and to solve:
\begin{equation}
\label{eq:AnalysisL1}
{\hat \x}  = \arg\min_{\x}~\|\Oper \x\|_1 ~~\mbox{subject to}~~\Meas \x = \y.
\end{equation}
The attractiveness of this approach comes from that \eqref{eq:AnalysisL1} is a convex problem and
hence admits computationally tractable algorithms to solve it, and that the $\ell_1$-norm
promotes high cosparsity in the solution $\hat x$.
An algorithm that targets the solution of \eqref{eq:AnalysisL1} and its convergence analysis can be found in \cite{DBLP:journals/mmas/CaiOS09}.

\subsection{The Greedy Analysis Pursuit Algorithm (GAP)}
\label{subsec:GAP}
 
The algorithm we present in this section is named Greedy Analysis Pursuit (GAP). 
As mentioned at the beginning of the section and as the name suggests,
this algorithm aims to find the cosupports of cosparse signals in a greedy
fashion.

An obvious way to find the cosupport of a cosparse signal would proceed as
follows: First, obtain a reasonable estimate of the signal from the given
information. Using the initial estimate, select a location as belonging to
the cosupport. Having this estimated part of the cosupport, we can obtain
a new estimate. One can now see that by alternating the two previous steps,
we will have identified enough locations of the cosupport to get the final
estimate.

However, the GAP works in an opposite direction and aims to detect 
the elements {\em outside} the set $\cosupp$, 
this way carving its way towards the detection of the desired cosupport. 
Therefore, the cosupport ${\hat \cosupp}$ is initialized to be the whole set $\{1,2,3,~\ldots~,p\}$, and through the iterations it is reduced towards a set of size $\cosparsity$ (or less, $\sdim -\mdim$).

Let us discuss the algorithm with some detail. 
First, the GAP uses the following initial estimate:
\begin{equation}
\label{eq:AlgoInit}
{\hat \x}_0 = \arg\min_{\x} \norm{2}{\Oper\x}^2 \subjectto \y = \Meas\x.
\end{equation} 
Not knowing the locations of the cosupport but knowing that many entries
of $\Oper\x_0$ are zero, this is a reasonable first estimate of $\x_0$.
Once we have ${\hat \x}_0$, we can view $\Oper{\hat\x}_0$ as an estimate
of $\Oper\x_0$. 
Hence, we find the location of the largest entries (in absolute value) of
$\Oper{\hat\x}_0$ and regard them as not belonging to the cosupport.
After this, we remove the corresponding rows from $\Oper$ and work with
a reduced $\Oper$.
A detailed description of the algorithm is given in Figure \ref{fig-GAP}. 

\begin{figure}[htb]
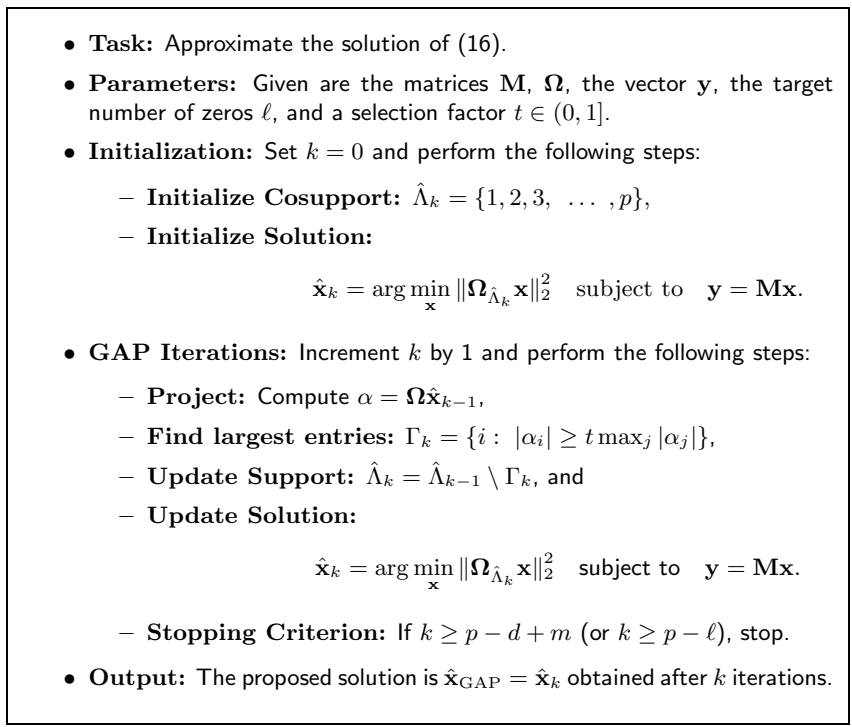

     \begin{center}
        \begin{tabular}{|c|} \hline
        \begin{minipage}[h]{\textwidth*8/9}
        \small \vspace{0.1in}
        \begin{itemize}
        \item \textsf {{\bf Task:} Approximate the solution of
            (\ref{eq:AnalInpainting1}).}
        \item \textsf {{\bf Parameters:} Given are the matrices $\Meas$, $\Oper$, the vector $\y$, the target number of zeros $\cosparsity$, and a selection factor $t \in (0,1]$.}
       \item \textsf {{\bf Initialization:} Set $k=0$ and perform the following steps:}
        \begin{itemize}
            \item{{\bf Initialize Cosupport:} ${\hat \cosupp}_k= \{1,2,3,~\ldots~,p\}$,}
            \item{{\bf Initialize Solution:} \[ \mkern100mu
            {\hat \x}_k = \arg\min_{\x} \norm{2}{\Oper_{{\hat\cosupp}_k}\x}^2 \subjectto \y = \Meas\x.\]}
        \end{itemize}
        \item\textsf {{\bf GAP Iterations:} Increment $k$ by 1 and perform the following steps:}
        \begin{itemize}
            \item \textsf {{\bf Project:} Compute $\mathbf{\alpha}=\Oper {\hat \x}_{k-1}$,}
            \item \textsf{{\bf Find largest entries:} $\Gamma_{k} = \{i:\ |\alpha_{i}| \geq t \max_{j} |\alpha_{j}|\}$,}
            \item \textsf {{\bf Update Support:} ${\hat \cosupp}_{k} = {\hat \cosupp}_{k-1} \setminus \Gamma_{k}$, and}
            \item \textsf {{\bf Update Solution:} \[ \mkern100mu
            {\hat \x}_k = \arg\min_{\x} \norm{2}{\Oper_{{\hat\cosupp}_k}\x}^2 \subjectto \y = \Meas\x.\]}
                \item \textsf {{\bf Stopping Criterion:} If $k \ge \pdim - \sdim +\mdim$ (or $k \ge \pdim - \cosparsity$), stop.}
        \end{itemize}
        \item \textsf{{\bf Output:} The proposed solution is
        $\hat\x_{\mathrm{GAP}} = \hat\x_k$ obtained after $k$ iterations.}
        \vspace{0.05in}
        \end{itemize}
        \end{minipage}
        \\\hline
        \end{tabular}
        \\ \vspace{0.1in}
        \caption{Greedy Analysis Pursuit Algorithm (GAP)}\label{fig-GAP}
     \end{center}
 \end{figure}

Some readers may notice that the GAP has similar flavors 
to the FOCUSS \cite{Gorodnitsky97sparsesignal} and the IRLS \cite{Daubechies_iterativelyreweighted}.
This is certainly true in the sense that the GAP solves constrained least squares problems
and adjusts weights as it iterates.
However, the weight adjustment in the GAP is more aggressive (removal of rows)
and binary in nature.

\paragraph{Stopping criterion / targeted sparsity}
In GAP, we debate between using the full $\cosparsity$ zeros in the product $\Oper \x$ versus a minimal and sufficient set of $\sdim -\mdim$ zeros. In between these two values, and assuming that the proper elements of $\cosupp$ have been detected, we expect the solution obtained by the algorithms to be the same, with a slightly better numerical stability for a larger number of zeros. 

Thus, an alternative stopping criterion for the GAP could be to detect whether the solution is static or the analysis coefficients of the solution are small. This way, even if the GAP made an error and removed from ${\hat \cosupp}_k$ an index that belongs to the true cosupport $\cosupp$, 
the tendency of the solution to stabilize could help in preventing the algorithm to incorporate this error into the solution. 
In fact, this criterion is used in the experiment in Section \ref{sec:experiment}. 

\paragraph{Multiple selections.} The selection factor $0<t \leq1$ allow the selection of multiple rows at once, to accelerate the algorithm by reducing the number of iterations.

\paragraph{Solving the required least squares problems}

The solution of~Eq.~\eqref{eq:AlgoInit} (and of the adjusted problems with reduced $\Oper$ at subsequent steps of the algorithm) is given analytically by
\[
\hat\x_{0} = 
\begin{bmatrix} \Meas \\ \Oper_{\hat\cosupp} \\ \end{bmatrix}^{\dagger}
\begin{bmatrix} \y \\ \mathbf{0} \\ \end{bmatrix}
= 
(\Meas^{T}\Meas + \Oper_{\hat\cosupp}^{T}\Oper_{\hat\cosupp})^{-1} \Meas^{T} \y.
\]
In practice, instead of \eqref{eq:AlgoInit}, we compute
\[
{\hat \x}_0 = \arg\min_{\x} \left\{ \norm{2}{\y - \Meas\x}^2 + \lambda \norm{2}{\Oper\x}^2 \right\}
=
\arg\min_{\x} \left\|
\begin{bmatrix} \y \\ \mathbf{0} \\ \end{bmatrix}
- \begin{bmatrix} \Meas \\ \sqrt{\lambda} \Oper \\ \end{bmatrix} \x
\right\|_{2}^{2}
\] 
for a small $\lambda > 0$, 
yielding the solution
\[
\hat\x_{0} = 
\begin{bmatrix} \Meas \\ \sqrt{\lambda} \Oper \\ \end{bmatrix}^{\dagger}
\begin{bmatrix} \y \\ \mathbf{0} \\ \end{bmatrix}
= 
(\Meas^{T}\Meas + \lambda\Oper^{T}\Oper)^{-1} \Meas^{T} \y.
\]

\section{Theoretical analysis}
\label{sec:theory}

So far, we have introduced the cosparse analysis data model, provided uniqueness results
in the context of linear inverse problems for the model, and described some algorithms
that may be used to solve such linear inverse problems to recover cosparse signals.
Before validating the algorithms and the model proposed with experimental results,
we first investigate theoretically under what conditions the proposed algorithms to solve
cosparse signal recovery~\eqref{eq:AnalInpainting1} are guaranteed to work. 
After that, we discuss the nature of the condition derived by contrasting
it to that for the synthesis model.
Further discussion including some desirable properties of $\Oper$ and $\Meas$
can be found in \ref{sec:discussionERC}.

\subsection{A Sufficient Condition for the Success of the $\ell_1$-minimization}
In the sparse synthesis framework, there is a well-known necessary and sufficient condition
called the \emph{null space property} (NSP) \cite{Donoho01uncertaintyprinciples} that guarantees the success of the synthesis
$\ell_1$-minimization
\begin{equation}
\label{eq:synthesisL1}
\hat\z_0 := \argmin_{\z} \norm{1}{\z} \subjectto \y = \mathbf{\Phi}\z
\end{equation}
to recover the sparsest solution, say $\z_0$, to $\y = \mathbf{\Phi}\z$.
To elaborate, in the case of a fixed support $\supp$, 
the $\ell_1$-minimization~\eqref{eq:synthesisL1} recovers every sparse coefficient vector
$\z_0$ supported on $\supp$ if and only if
\begin{equation}
\label{eq:synthesisNSP}
\norm{1}{\z_\supp} < \norm{1}{\z_{\supp^c}}, \quad \forall \z \in \Null(\mathbf{\Phi}),\ \z \neq 0.
\end{equation}
The NSP~\eqref{eq:synthesisNSP} cannot easily be checked but some `simpler' sufficient
conditions can be derived from it; for example, one can get a recovery condition of
\cite{Tropp04greedis} called the Exact Recovery Condition (ERC):
\begin{equation}
\label{eq:synthesisERC}
\opnorm{1\to1}{\mathbf{\Phi}^\dagger_\supp \mathbf{\Phi}_{\supp^c}} < 1,
\end{equation}
which also implies the success of greedy algorithms such as OMP \cite{Tropp04greedis}.
Note that here we used the symbol $\mathbf{\Phi}$ for an object which may be viewed
as a dictionary or a measurement matrix. Separating the data model and sampling,
we can write $\mathbf{\Phi} = \Meas\Dict$ as was done in Section~\ref{sec:uniqueness}.

One may naturally wonder: is there a condition for the cosparse analysis model that is similar to~\eqref{eq:synthesisNSP} and~\eqref{eq:synthesisERC}?
The answer to this question seems to be affirmative with some qualification
as the following two results show (the proofs are in \ref{sec:proofL1}):
\begin{thm}
\label{thm:L1NSC}
Let $\cosupp$ be a fixed cosupport. 
The analysis $\ell_1$-minimization
\begin{equation}
\label{eq:analysisL1}
\hat\x_0 := \argmin_{\x} \norm{1}{\Oper\x} \subjectto \y := \Meas\x_0 = \Meas\x
\end{equation}
recovers every $\x_0$ with cosupport $\cosupp$ as a unique minimizer if, and only if,
\begin{equation}
\label{eq:L1NSCsimple}
\sup_{\x_{\cosupp} : \Oper_{\cosupp}\x_{\cosupp}=0}\left| \innerp{\Oper_{\cosupp^c} \z}{\sign(\Oper_{\cosupp^c}\x_\cosupp)} \right|
<
\norm{1}{\Oper_{\cosupp} \z},\quad \forall \z \in \Null(\Meas),\ \z \neq 0.
\end{equation}
\end{thm}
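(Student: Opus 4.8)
The plan is to reduce the recovery statement to a one-sided comparison along $\Null(\Meas)$ and then dispatch the two implications by a subgradient inequality and a first-order perturbation. Any $\x$ feasible for \eqref{eq:analysisL1} has the form $\x=\x_0+\z$ with $\z\in\Null(\Meas)$, so $\x_0$ is \emph{the} minimizer if and only if $\norm{1}{\Oper(\x_0+\z)}>\norm{1}{\Oper\x_0}$ for every $\z\in\Null(\Meas)\setminus\{0\}$. Since $\Oper_\cosupp\x_0=0$, I would split the $\ell^1$ norm along $\cosupp$ and $\cosupp^c$:
\[
\norm{1}{\Oper(\x_0+\z)}-\norm{1}{\Oper\x_0}
= \norm{1}{\Oper_\cosupp\z} + \big(\norm{1}{\Oper_{\cosupp^c}\x_0+\Oper_{\cosupp^c}\z}-\norm{1}{\Oper_{\cosupp^c}\x_0}\big),
\]
so everything hinges on the sign of the right-hand side over $\z\in\Null(\Meas)$.

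For the ``if'' direction I would use that, because the cosupport is \emph{exactly} $\cosupp$, the vector $\Oper_{\cosupp^c}\x_0$ has no vanishing entry, so $\sign(\Oper_{\cosupp^c}\x_0)$ is its unique $\ell^1$-subgradient and $\norm{1}{\Oper_{\cosupp^c}\x_0+\Oper_{\cosupp^c}\z}-\norm{1}{\Oper_{\cosupp^c}\x_0}\ge\innerp{\Oper_{\cosupp^c}\z}{\sign(\Oper_{\cosupp^c}\x_0)}$. Hence
\begin{align*}
\norm{1}{\Oper(\x_0+\z)}-\norm{1}{\Oper\x_0}
&\ge \norm{1}{\Oper_\cosupp\z}+\innerp{\Oper_{\cosupp^c}\z}{\sign(\Oper_{\cosupp^c}\x_0)}\\
&\ge \norm{1}{\Oper_\cosupp\z}-\sup_{\x_\cosupp\,:\,\Oper_\cosupp\x_\cosupp=0}\big|\innerp{\Oper_{\cosupp^c}\z}{\sign(\Oper_{\cosupp^c}\x_\cosupp)}\big|,
\end{align*}
and \eqref{eq:L1NSCsimple} makes this strictly positive for every $\z\neq 0$ in $\Null(\Meas)$, uniformly in $\x_0$; this yields uniqueness for every admissible $\x_0$ at once.

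For the ``only if'' direction I would argue by contraposition: if \eqref{eq:L1NSCsimple} fails for some $\z\in\Null(\Meas)\setminus\{0\}$, then — since $\x_\cosupp$ enters only through $\sign(\Oper_{\cosupp^c}\x_\cosupp)$, which takes finitely many values — the supremum is attained, and (assuming, as is implicit, that a signal of cosupport exactly $\cosupp$ exists) it is attained by some such $\x_0$; replacing $\z$ by $-\z$ if needed, $\innerp{\Oper_{\cosupp^c}\z}{\sign(\Oper_{\cosupp^c}\x_0)}\le -\norm{1}{\Oper_\cosupp\z}$. Because full support is an open condition, $\sign(\Oper_{\cosupp^c}(\x_0+t\z))=\sign(\Oper_{\cosupp^c}\x_0)$ for small $t>0$, so the displayed difference at $\x_0+t\z$ equals $t\big(\norm{1}{\Oper_\cosupp\z}+\innerp{\Oper_{\cosupp^c}\z}{\sign(\Oper_{\cosupp^c}\x_0)}\big)\le 0$; thus $\x_0$ is not the unique minimizer and recovery fails.

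The step I expect to be the main obstacle is the necessity direction: turning a bare failure of \eqref{eq:L1NSCsimple} into a genuine counterexample $\x_0$ of cosupport \emph{exactly} $\cosupp$ (not some element of $\aspace_\cosupp$ with extra accidental zeros), picking the sign of $\z$ so the first-order term is non-positive, and tracking that it is precisely the \emph{strict} inequality in \eqref{eq:L1NSCsimple} that upgrades ``$\x_0$ is a minimizer'' to ``$\x_0$ is \emph{the} minimizer''. The sufficiency argument and the bookkeeping converting ``for every admissible $\x_0$'' into the supremum form are routine once the splitting above is in place.
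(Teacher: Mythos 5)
Your proof is correct and follows essentially the same route as the paper's: the paper's (much terser) argument likewise characterizes $\x_0$ as a strict local --- hence, by convexity, unique global --- minimizer by splitting $\norm{1}{\Oper(\x_0+\z)}$ along $\cosupp$ and $\cosupp^c$ for feasible perturbations $\z\in\Null(\Meas)$, which is exactly your subgradient and sign-stability computation written out in full. The one issue you flag in the necessity direction --- that the supremum over sign patterns might be attained only at an $\x_\cosupp$ whose analysis vector has accidental extra zeros off $\cosupp$ --- is also left implicit in the paper; it can be repaired by perturbing such a witness within $\aspace_\cosupp$ and choosing the sign of the perturbation so that $\left| \innerp{\Oper_{\cosupp^c} \z}{\sign(\Oper_{\cosupp^c}\x_\cosupp)} \right|$ does not decrease.
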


\begin{corollary}
\label{thm:L1ERC}
Let $\OMeas^{T}$ be any $\sdim \times (\sdim-\mdim)$ basis matrix for the null space $\Null(\Meas)$, and $\cosupp$ be a fixed cosupport such that the $\cosparsity \times (\sdim-\mdim)$ matrix $\Oper_{\cosupp}\OMeas^{T}$ is of full rank $\sdim-\mdim$.  If
\begin{equation}
\label{eq:L1NSC}
\sup_{\x_\cosupp : \Oper_\cosupp\x_\cosupp = 0}
\norm{\infty}         
{
(\OMeas\Oper_\cosupp ^T)^\dagger \OMeas\Oper_{\cosupp^c}^T 
\sign(\Oper_{\cosupp^c}\x_\cosupp)}
< 1,
\end{equation}
then the analysis $\ell_1$-minimization~\eqref{eq:analysisL1}
recovers every $\x_0$ with cosupport $\cosupp$. Moreover, if
\begin{equation}
\label{eq:AL1ERC}
\opnorm{\infty\to\infty}{(\OMeas\Oper_{\cosupp}^T)^{\dagger} \OMeas \Oper_{\cosupp^c}^T }
=
\opnorm{1\to1}{\Oper_{\cosupp^c} \OMeas^{T}  (\Oper_{\cosupp} \OMeas^T)^{\dagger}}
 < 1
\end{equation}
then condition~\eqref{eq:L1NSC} holds true.
\end{corollary}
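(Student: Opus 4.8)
The plan is to derive the sufficient condition~\eqref{eq:L1NSC} from the exact characterization~\eqref{eq:L1NSCsimple} of Theorem~\ref{thm:L1NSC}, by a lossless change of variables on $\Null(\Meas)$ followed by H\"older's inequality, and then to obtain~\eqref{eq:AL1ERC} as a crude row-sum bound on~\eqref{eq:L1NSC}.

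First I would rewrite~\eqref{eq:L1NSCsimple} in coordinates adapted to the basis matrix $\OMeas^{T}$ of $\Null(\Meas)$: every nonzero $\z\in\Null(\Meas)$ is $\z = \OMeas^{T}\v$ for a unique $\v\neq 0$, so $\Oper_{\cosupp}\z = \Oper_{\cosupp}\OMeas^{T}\v$ and $\Oper_{\cosupp^c}\z = \Oper_{\cosupp^c}\OMeas^{T}\v$, and~\eqref{eq:L1NSCsimple} becomes: for every $\v\neq 0$,
\[
\sup_{\x_{\cosupp} : \Oper_{\cosupp}\x_{\cosupp}=0} \bigl| \innerp{\v}{\OMeas\Oper_{\cosupp^c}^{T}\sign(\Oper_{\cosupp^c}\x_{\cosupp})} \bigr| < \norm{1}{\Oper_{\cosupp}\OMeas^{T}\v}.
\]
The key idea is to trade $\v$ for the vector $\w:=\Oper_{\cosupp}\OMeas^{T}\v$ that appears on the right. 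By the full-rank hypothesis on $\Oper_{\cosupp}\OMeas^{T}$, its pseudo-inverse is an exact left inverse, so $\v=(\Oper_{\cosupp}\OMeas^{T})^{\dagger}\w$ and $\w=0$ if and only if $\v=0$; substituting and using $((\Oper_{\cosupp}\OMeas^{T})^{\dagger})^{T}=(\OMeas\Oper_{\cosupp}^{T})^{\dagger}$ (i.e., the identity $(A^{\dagger})^{T}=(A^{T})^{\dagger}$), the left-hand side equals
\[
\bigl| \innerp{\w}{(\OMeas\Oper_{\cosupp}^{T})^{\dagger}\OMeas\Oper_{\cosupp^c}^{T}\sign(\Oper_{\cosupp^c}\x_{\cosupp})} \bigr| \le \norm{1}{\w}\,\norm{\infty}{(\OMeas\Oper_{\cosupp}^{T})^{\dagger}\OMeas\Oper_{\cosupp^c}^{T}\sign(\Oper_{\cosupp^c}\x_{\cosupp})}
\]
by H\"older's inequality. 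Taking the supremum over $\x_{\cosupp}$ and invoking hypothesis~\eqref{eq:L1NSC}, the right-hand side is strictly below $\norm{1}{\w}=\norm{1}{\Oper_{\cosupp}\OMeas^{T}\v}$ for every $\v\neq 0$; hence~\eqref{eq:L1NSCsimple} holds and Theorem~\ref{thm:L1NSC} gives recovery of every $\x_{0}$ with cosupport $\cosupp$.

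For the ``moreover'' clause, I would simply note that each $s=\sign(\Oper_{\cosupp^c}\x_{\cosupp})$ satisfies $\norm{\infty}{s}\le 1$, so by definition of the induced operator norm $\norm{\infty}{(\OMeas\Oper_{\cosupp}^{T})^{\dagger}\OMeas\Oper_{\cosupp^c}^{T}s}\le\opnorm{\infty\to\infty}{(\OMeas\Oper_{\cosupp}^{T})^{\dagger}\OMeas\Oper_{\cosupp^c}^{T}}$ for every $\x_{\cosupp}$ with $\Oper_{\cosupp}\x_{\cosupp}=0$; thus if the latter quantity is $<1$, so is the supremum on the left of~\eqref{eq:L1NSC}. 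The stated equality of the two operator norms is the classical duality $\opnorm{\infty\to\infty}{M}=\opnorm{1\to1}{M^{T}}$ (maximum absolute row sum of $M$ equals maximum absolute column sum of $M^{T}$), applied with $M=(\OMeas\Oper_{\cosupp}^{T})^{\dagger}\OMeas\Oper_{\cosupp^c}^{T}$ together with $M^{T}=\Oper_{\cosupp^c}\OMeas^{T}(\Oper_{\cosupp}\OMeas^{T})^{\dagger}$.

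The computation is essentially bookkeeping, and I do not expect a genuine obstacle. The one place that deserves care is the transpose/pseudo-inverse identity combined with the full-rank assumption: this is exactly what makes the substitution $\v\leftrightarrow\w$ lossless and keeps the final inequality strict. A minor point is the status of zero entries of $\Oper_{\cosupp^c}\x_{\cosupp}$ inside $\sign(\cdot)$, which merely enlarges the set of admissible sign vectors over which the supremum is taken and is therefore harmless.
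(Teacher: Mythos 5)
Your argument is correct and follows essentially the same route as the paper's proof in the appendix: parametrize $\Null(\Meas)$ by $\OMeas^{T}$, trade the null-space coordinate for $\beta=\Oper_{\cosupp}\OMeas^{T}\alpha$ via the pseudo-inverse of the full-column-rank matrix (using $(A^{\dagger})^{T}=(A^{T})^{\dagger}$), and conclude by $\ell_{1}$--$\ell_{\infty}$ duality, with the row-sum bound giving~\eqref{eq:AL1ERC}. Your direct use of H\"older on vectors $\w$ in the range of $\Oper_{\cosupp}\OMeas^{T}$ is just a marginally tighter phrasing of the paper's step of demanding the inequality for all $\beta$ (which is where the paper notes sufficiency replaces necessity); the resulting conditions and logic are identical.
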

There is an apparent similarity between the analysis ERC condition~\eqref{eq:AL1ERC} above and its standard synthesis counterpart~\eqref{eq:synthesisERC}, yet there are some subtle differences between the two that will be highlighted in Section~\ref{sec:comparisonERC}.

\subsection{A Sufficient Condition for the Success of the GAP}
There is an interesting parallel between the synthesis 
ERC~\eqref{eq:synthesisERC} and its analysis version in 
Corollary~\ref{thm:L1ERC}; namely, the analysis ERC condition~\eqref{eq:AL1ERC} 
also implies the success of the GAP algorithm, as we will now show. 

From the way GAP algorithm works, we can guarantee that it will perform a
correct elimination at the first step if 
the largest analysis coefficients of $\Oper_{\cosupp^c} \hat \x_0$ of the first estimate $\hat \x_0$
are larger than the largest of $\Oper_{\cosupp} \hat \x_0$
where $\cosupp$ denotes the true cosupport of $\x_0$.
This observation suggests that we can hope to find a condition for success if we can
find some relation between $\Oper_{\cosupp^c} \hat \x_0$ and
$\Oper_{\cosupp} \hat \x_0$. 
The following result provides such a relation:

\begin{lemma}
\label{thm:GAPCoefRel}
Let $\OMeas^T$ be any $\sdim \times (\sdim-\mdim)$ basis matrix for the null space $\Null(\Meas)$ and $\cosupp$ be a fixed cosupport such that the $\cosparsity \times (\sdim-\mdim)$ matrix $\Oper_{\cosupp}\OMeas^{T}$ is of full rank $\sdim-\mdim$.
Let a signal $\x_0$ with $\Oper_\cosupp \x_0 = 0$ and its
observation $\y=\Meas\x_0$ be given. Then the estimate $\hat \x_0$ in \eqref{eq:AlgoInit} satisfies
\begin{equation}
\label{eq:CoefRel}
\Oper_\cosupp \hat\x_0 = 
-(\OMeas\Oper_\cosupp^T)^{\dagger}  \OMeas \Oper_{\cosupp^c}^T \Oper_{\cosupp^c} \hat \x_0.
\end{equation}
\end{lemma}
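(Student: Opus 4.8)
The plan is to characterize the minimizer $\hat\x_0$ of the constrained least-squares problem \eqref{eq:AlgoInit} via its optimality conditions, using the parametrization of the feasible set afforded by the null-space basis $\OMeas^T$. First I would note that any $\x$ with $\Meas\x=\y$ can be written as $\x = \x_0 + \OMeas^T\w$ for some $\w \in \RR^{\sdim-\mdim}$, since the columns of $\OMeas^T$ span $\Null(\Meas)$ and $\x_0$ is feasible. Substituting into the objective, the problem \eqref{eq:AlgoInit} becomes the unconstrained quadratic minimization $\min_{\w} \norm{2}{\Oper_{{\hat\cosupp}}(\x_0 + \OMeas^T\w)}^2$; in the context of the lemma the relevant row set is the true cosupport $\cosupp$ (the lemma is stated for the first estimate where the working cosupport equals $\cosupp$ -- or more precisely one uses $\Oper_{\cosupp}\x_0 = 0$ to simplify). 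Splitting $\Oper$ into its rows indexed by $\cosupp$ and by $\cosupp^c$ and using $\Oper_\cosupp\x_0 = 0$, the objective splits as $\norm{2}{\Oper_\cosupp\OMeas^T\w}^2 + \norm{2}{\Oper_{\cosupp^c}\x_0 + \Oper_{\cosupp^c}\OMeas^T\w}^2$.

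Next I would write the normal equations for this quadratic in $\w$. Setting the gradient to zero gives
\[
(\Oper_\cosupp\OMeas^T)^T(\Oper_\cosupp\OMeas^T)\w + (\Oper_{\cosupp^c}\OMeas^T)^T(\Oper_{\cosupp^c}\x_0 + \Oper_{\cosupp^c}\OMeas^T\w) = 0.
\]
Recognizing that $\hat\x_0 = \x_0 + \OMeas^T\w$, so that $\OMeas\Oper_\cosupp^T\Oper_\cosupp\OMeas^T\w = \OMeas\Oper_\cosupp^T\Oper_\cosupp\hat\x_0$ (again using $\Oper_\cosupp\x_0=0$) and $\Oper_{\cosupp^c}\x_0 + \Oper_{\cosupp^c}\OMeas^T\w = \Oper_{\cosupp^c}\hat\x_0$, the normal equations collapse to
\[
\OMeas\Oper_\cosupp^T\Oper_\cosupp\hat\x_0 + \OMeas\Oper_{\cosupp^c}^T\Oper_{\cosupp^c}\hat\x_0 = 0,
\]
i.e. $\OMeas(\Oper_\cosupp^T\Oper_\cosupp + \Oper_{\cosupp^c}^T\Oper_{\cosupp^c})\hat\x_0 = \OMeas\Oper^T\Oper\hat\x_0 = 0$. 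Rearranging, $\OMeas\Oper_\cosupp^T(\Oper_\cosupp\hat\x_0) = -\OMeas\Oper_{\cosupp^c}^T\Oper_{\cosupp^c}\hat\x_0$.

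Finally I would invert. Since $\Oper_\cosupp\OMeas^T$ has full column rank $\sdim-\mdim$ by hypothesis, its adjoint $\OMeas\Oper_\cosupp^T$ has full row rank, and more importantly the Gram-type factor can be handled: left-multiplying by $(\OMeas\Oper_\cosupp^T)^\dagger$ and using that $(\OMeas\Oper_\cosupp^T)^\dagger(\OMeas\Oper_\cosupp^T)$ acts as the identity on the relevant subspace -- here one must observe that $\Oper_\cosupp\hat\x_0$ lies in $\Range(\Oper_\cosupp\OMeas^T)$, which holds because $\hat\x_0 - \x_0 \in \Null(\Meas) = \Range(\OMeas^T)$ and $\Oper_\cosupp\x_0 = 0$, so $\Oper_\cosupp\hat\x_0 = \Oper_\cosupp\OMeas^T\w \in \Range(\Oper_\cosupp\OMeas^T)$ -- yields exactly \eqref{eq:CoefRel}. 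The main subtlety, and the step I would be most careful about, is this last pseudoinverse manipulation: one needs $\Oper_\cosupp\hat\x_0$ to be in the range of $\Oper_\cosupp\OMeas^T$ so that applying $(\OMeas\Oper_\cosupp^T)^\dagger$ to both sides of $\OMeas\Oper_\cosupp^T(\Oper_\cosupp\hat\x_0) = -\OMeas\Oper_{\cosupp^c}^T\Oper_{\cosupp^c}\hat\x_0$ genuinely recovers $\Oper_\cosupp\hat\x_0$ on the left rather than merely its projection; the full-rank hypothesis on $\Oper_\cosupp\OMeas^T$ is exactly what makes this work, and verifying the range membership cleanly is where the real content of the proof lies.
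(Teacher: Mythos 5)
Your proof is correct and follows essentially the same route as the paper's: both reduce the optimality conditions of \eqref{eq:AlgoInit} to the key identity $\OMeas\Oper^T\Oper\hat\x_0 = 0$, split $\Oper^T\Oper$ into the $\cosupp$ and $\cosupp^c$ blocks, and use $\hat\x_0-\x_0\in\Null(\Meas)=\Range(\OMeas^T)$ together with the full-rank hypothesis on $\Oper_\cosupp\OMeas^T$ to invert. The only cosmetic differences are that the paper obtains the stationarity condition via Lagrange multipliers rather than your null-space parametrization, and it solves explicitly for $\w$ and identifies $\Oper_\cosupp\OMeas^T(\OMeas\Oper_\cosupp^T\Oper_\cosupp\OMeas^T)^{-1}=(\OMeas\Oper_\cosupp^T)^\dagger$, whereas you apply the pseudoinverse directly and justify it by the range-membership argument -- which you correctly identify and verify as the one step needing care.
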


Having obtained a relation between $\Oper_\cosupp \hat \x_0$ and
$\Oper_{\cosupp^c} \hat \x_0$, we can derive a sufficient condition which
guarantees the success of GAP for recovering the true target signal $\x_0$:

\begin{thm}
\label{thm:GAPrecovery}
Let $\OMeas^T$ be any $\sdim \times (\sdim-\mdim)$ basis matrix for the null space $\Null(\Meas)$ and $\cosupp$ be a fixed cosupport such that the $\cosparsity \times (\sdim-\mdim)$ matrix $\Oper_{\cosupp}\OMeas^{T}$ is of full rank $\sdim-\mdim$.
Let a signal $\x_0$ with $\Oper_\cosupp \x_0 = 0$ and an
observation $\y=\Meas\x_0$ be given. Suppose that the analysis ERC~\eqref{eq:AL1ERC} holds true.
Then, when applied to solve \eqref{eq:AnalInpainting1}, 
GAP with selections factor $t \geq \opnorm{\infty\to\infty}{(\OMeas\Oper_{\cosupp}^T)^{\dagger} \OMeas \Oper_{\cosupp^c}^T }$ will recover $\x_0$ after at most $|\cosupp^c|$ iterations. 
\end{thm}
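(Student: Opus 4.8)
The plan is to show, by induction on the iteration counter $k$, that GAP never discards a row belonging to the true cosupport $\cosupp$; consequently the active cosupport $\hat\cosupp_k$ can only shrink from within $\cosupp^c$, and after at most $|\cosupp^c|$ iterations it has shrunk to exactly $\cosupp$, at which point the least-squares update returns $\x_0$. Throughout, write $c := \opnorm{\infty\to\infty}{(\OMeas\Oper_{\cosupp}^T)^{\dagger} \OMeas \Oper_{\cosupp^c}^T}$; the analysis ERC~\eqref{eq:AL1ERC} says exactly $c<1$, and the hypothesis on the selection factor reads $t \ge c$.

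For the induction I would maintain the invariant that $\cosupp \subseteq \hat\cosupp_{k}$ and that, as long as $\hat\x_{k}\ne\x_0$, the inclusion $\hat\cosupp_{k+1}\subsetneq\hat\cosupp_{k}$ is strict. The base case is immediate because $\hat\cosupp_0=\{1,\dots,\pdim\}$. For the inductive step, assume $\cosupp\subseteq\hat\cosupp_{k-1}$ and $\hat\x_{k-1}\ne\x_0$. Since $\cosupp$ is still a sub-collection of the rows of the reduced operator $\Oper_{\hat\cosupp_{k-1}}$, the matrix $\Oper_{\cosupp}\OMeas^{T}$ is unchanged (hence still of full column rank), $\x_0$ still satisfies $\Oper_{\cosupp}\x_0=0$, and $\hat\x_{k-1}$ is precisely the minimiser in~\eqref{eq:AlgoInit} with $\Oper$ replaced by $\Oper_{\hat\cosupp_{k-1}}$. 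I would therefore apply Lemma~\ref{thm:GAPCoefRel} to this reduced problem --- in which the role of $\cosupp^c$ is played by $\hat\cosupp_{k-1}\setminus\cosupp\subseteq\cosupp^c$ --- to get
\[
\Oper_\cosupp\hat\x_{k-1}
=
-(\OMeas\Oper_\cosupp^T)^{\dagger}\,\OMeas\,\Oper_{\hat\cosupp_{k-1}\setminus\cosupp}^T\,\Oper_{\hat\cosupp_{k-1}\setminus\cosupp}\hat\x_{k-1}.
\]
Taking $\ell^\infty$ norms, using submultiplicativity of $\opnorm{\infty\to\infty}{\cdot}$, and using that deleting columns of a matrix cannot increase its largest absolute row sum --- so that $\opnorm{\infty\to\infty}{(\OMeas\Oper_\cosupp^T)^{\dagger}\OMeas\Oper_{\hat\cosupp_{k-1}\setminus\cosupp}^T}\le \opnorm{\infty\to\infty}{(\OMeas\Oper_\cosupp^T)^{\dagger}\OMeas\Oper_{\cosupp^c}^T}=c$ --- I obtain $\norm{\infty}{\Oper_\cosupp\hat\x_{k-1}}\le c\,\norm{\infty}{\Oper_{\hat\cosupp_{k-1}\setminus\cosupp}\hat\x_{k-1}}$.

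From here the conclusion is short. If $\norm{\infty}{\Oper_{\hat\cosupp_{k-1}\setminus\cosupp}\hat\x_{k-1}}=0$ then also $\Oper_\cosupp\hat\x_{k-1}=0$, hence $\Oper_{\hat\cosupp_{k-1}}\hat\x_{k-1}=0$; since $\Oper_\cosupp\OMeas^{T}$ has full column rank we have $\aspace_\cosupp\cap\Null(\Meas)=\{0\}$, and as both $\hat\x_{k-1}$ and $\x_0$ lie in $\aspace_\cosupp$ and map to $\y$ under $\Meas$ this forces $\hat\x_{k-1}=\x_0$, contrary to assumption --- so the right-hand side is strictly positive. Writing $\alpha=\Oper\hat\x_{k-1}$, this means $\max_{j\in\hat\cosupp_{k-1}}|\alpha_j|=\norm{\infty}{\Oper_{\hat\cosupp_{k-1}\setminus\cosupp}\hat\x_{k-1}}$ is attained at some index of $\cosupp^c$ (so $\Gamma_k\neq\emptyset$), while every $i\in\cosupp$ satisfies $|\alpha_i|\le\norm{\infty}{\Oper_\cosupp\hat\x_{k-1}}\le c\max_{j\in\hat\cosupp_{k-1}}|\alpha_j|\le t\max_{j\in\hat\cosupp_{k-1}}|\alpha_j|$. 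Because $c<1$, this keeps every $i\in\cosupp$ strictly below the selection threshold (taking $t>c$, or invoking the genericity of strict inequality in the operator-norm bound when $t=c$), so $\Gamma_k\subseteq\cosupp^c$ and $\cosupp\subseteq\hat\cosupp_k\subsetneq\hat\cosupp_{k-1}$, closing the induction. Since $|\cosupp^c|=\pdim-\cosparsity$ and every non-terminal iteration removes at least one index of $\cosupp^c$, within at most $|\cosupp^c|$ iterations either $\hat\x_k=\x_0$ or $\hat\cosupp_k=\cosupp$; in the latter case the update~\eqref{eq:AlgoInit} with operator $\Oper_\cosupp$ minimises $\norm{2}{\Oper_\cosupp\x}^2$ over $\{\x:\Meas\x=\y\}$, the value $0$ is attained at $\x_0$, and uniqueness of the minimiser again follows from $\aspace_\cosupp\cap\Null(\Meas)=\{0\}$, so $\hat\x_k=\x_0$.

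The part I expect to be most delicate is the induction itself: one has to check that all hypotheses of Lemma~\ref{thm:GAPCoefRel} survive when $\Oper$ is replaced by the progressively shrinking operator $\Oper_{\hat\cosupp_k}$, that the relevant $\ell^\infty\to\ell^\infty$ operator norm is monotone under deletion of the columns indexed outside $\cosupp$, and that the active set strictly shrinks at each step --- in particular handling the degenerate case $\Oper_{\hat\cosupp_k}\hat\x_k=0$, in which one must argue that $\x_0$ has already been recovered. The remaining estimates --- passing from the exact coefficient identity to the $\ell^\infty$ inequality and then to the comparison of thresholded coefficients --- are routine.
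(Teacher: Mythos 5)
Your proof follows essentially the same route as the paper's: apply Lemma~\ref{thm:GAPCoefRel} inductively to the progressively reduced operator and use that deleting columns cannot increase the $\infty\to\infty$ operator norm, so the selection step never removes an index of $\cosupp$. You are in fact slightly more careful than the paper on two minor points --- the termination/degenerate case where $\Oper_{\hat\cosupp_{k}}\hat\x_{k}=0$, and the strict-versus-non-strict inequality at the threshold $t=c$ --- but the argument is the same.
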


\begin{proof}
At the first iteration, GAP is doing the correct thing if it removes a row from
$\Oper_{\cosupp^c}$. Clearly, this happens when
\begin{equation}\label{eq:GAPOnestep}
\norm{\infty}{\Oper_\cosupp \hat\x_0} < t \norm{\infty}{\Oper_{\cosupp^c} \hat \x_0}.
\end{equation}
In view of \eqref{eq:CoefRel}, if \eqref{eq:AL1ERC} holds and $t \geq \opnorm{\infty\to\infty}{(\OMeas\Oper_{\cosupp}^T)^{\dagger} \OMeas \Oper_{\cosupp^c}^T }$, then
\eqref{eq:GAPOnestep} is guaranteed. Therefore, GAP successfully removes a row
from $\Oper_{\cosupp^c}$ at the first step.

Now suppose that \eqref{eq:AL1ERC} was true and GAP has removed a row
from $\Oper_{\cosupp^c}$ at the first iteration. 
Then, at the next iteration, we have the same $\Oper_{\cosupp}$ and, in the place of
$\Oper_{\cosupp^c}$,
a submatrix $\tilde \Oper_{\cosupp^c}$ of $\Oper_{\cosupp^c}$ (with one fewer row).
Thus, we can invoke Lemma~\ref{thm:GAPCoefRel} again and we have
\[
\Oper_\cosupp \hat\x_1 = - \left(\OMeas \Oper_\cosupp^T\right)^{\dagger} \OMeas \tilde \Oper_{\cosupp^c}^T \tilde \Oper_{\cosupp^c} \hat \x_1.
\]
Let $\mathbf{R}_0 := \left(\OMeas \Oper_\cosupp^T\right)^{\dagger} \OMeas \Oper_{\cosupp^c}^T$ and
$\mathbf{R}_1 := \left(\OMeas \Oper_\cosupp^T\right)^{\dagger} \OMeas \tilde \Oper_{\cosupp^c}^T$.
We observe that $\mathbf{R}_1$ 
is a submatrix of $\mathbf{R}_0$ obtained by removing one column. Therefore,
\[
\opnorm{\infty\to\infty}{\mathbf{R}_1} < \opnorm{\infty\to\infty}{\mathbf{R}_0} \leq t.
\]
By the same logic as for the first step, the success of the second step is guaranteed.
Repeating the same argument, we obtain the conclusion.
\end{proof}

\begin{remark}
As pointed out at the beginning of the subsection, the Exact Recovery Condition~\eqref{eq:AL1ERC} for the cosparse signal
recovery guarantees the success of both the GAP and the analysis $\ell_1$-minimization.
\end{remark}

\subsection{Analysis {\em vs} synthesis exact recovery conditions}
\label{sec:comparisonERC}

When $\mathbf{\Phi}$ is written as $\Meas\Dict$, the exact recovery condition~\eqref{eq:synthesisERC}
for the sparse synthesis model is equivalent to
\begin{equation}
\label{eq:OMPRecCon}
\opnorm{1\to1}{(\Meas \Dict_{\supp})^{\dagger}\Meas \Dict_{\supp^c}} < 1.
\end{equation}
Here, $\supp$ is the support of the sparsest representation of the target signal.
At first glance, the two conditions~\eqref{eq:OMPRecCon} and \eqref{eq:AL1ERC}: \[
\opnorm{1\to1}{\Oper_{\cosupp^c} \OMeas^T (\Oper_{\cosupp}\OMeas^{T})^{\dagger}}
              < 1
\]
look similar; that is, for both cases, one needs to understand the characteristics of a single matrix,
$\Oper\OMeas^T$ for the cosparse model, and $\Meas\Dict$ for the sparse model. Moreover,
the expressions involving these matrices have similar forms.

However, upon closer inspection, there is a crucial difference in the structures 
of the two expressions.
In the synthesis case, the operator norm in question depends only on how
the {\em columns} of $\Meas\Dict$ are related, since a more explicit writing of the pseudo-inverse shows that the matrix to consider is
\[
(\Dict_{\supp}^T \Meas^T \Meas \Dict_{\supp})^{-1} (\Meas \Dict_{\supp})^{T} \Meas \Dict_{\supp^c}  
\]
This fact allows us to obtain more easily characterizable conditions like
incoherence assumptions \cite{Tropp04greedis} that ensure condition~\eqref{eq:OMPRecCon}.

To the contrary, in the analysis case, more complicated relations among \emph{the rows
and the columns} of $\Oper\OMeas^T$ have to be taken into account. The matrix to consider being
\[
\Oper_{\cosupp^c}\OMeas^{T}
 \left(\OMeas \Oper_\cosupp^T \Oper_\cosupp \OMeas^T\right)^{-1} 
\OMeas \Oper_\cosupp^T,
\]
the inner expression $\OMeas \Oper_\cosupp^T \Oper_\cosupp \OMeas^T$
is connected with how the {\em columns} of $\Oper\OMeas^T$ are related. 
However, because the matrices $\Oper_{\cosupp^{c}} \OMeas^T$ and $\OMeas \Oper_{\cosupp}^T$ appear outside, it also becomes relevant how the {\em rows} of $\Oper\OMeas^T$ are related.

There is also an interesting distinction in terms of the sharpness of these exact recovery conditions. 
Namely, the violation of \eqref{eq:OMPRecCon} implies the failure of the OMP in the sense that there exist a sparse vector $\x = \Dict_{\supp} \z_{\supp}$ for which the first step of OMP picks up an atom which is not indexed by $
\supp$. To the opposite, the violation of~\eqref{eq:AL1ERC} does not seem to imply the necessary ``failure'' of GAP in a similar sense.

Note however that both conditions are not essential for the success of the algorithms.
One of the reasons is that the violation of the conditions does not guarantee that
the algorithms would select wrong atoms.
Furthermore, even if the GAP or the OMP ``fails'' in one step, that does not necessarily mean that the algorithms fail in the end: further steps may still enable them to achieve an accurate estimate of the vector $\x_{0}$.

\subsection{Relation to the Work by Cand{\`e}s et. al. \cite{Candes:2010ab}}
Before moving onto experimental results, we discuss the recovery guarantee result of Cand\`es et al.~\cite{Candes:2010ab} for the algorithm
\begin{equation}
\label{eq:noisyL1}
\hat\x = \argmin_{\hat\x\in\RR^{\sdim}} \norm{1}{\Dict^T\hat\x} \subjectto \norm{2}{\Meas\hat\x - \y} \le \epsilon
\end{equation}
when partial noisy observation $\y = \Meas\x + \w$ with $\norm{2}{\w} \le \epsilon$
is given for an unknown target signal $\x$.

In order to derive the result, the concept of D-RIP is introduced~\cite{Candes:2010ab}:
A measurement matrix $\Meas$ satisfies D-RIP adapted to $\Dict$ with constant $\delta_s^D$ if
\[
(1-\delta_s^D) \norm{2}{v}^2 \le \norm{2}{\Meas v}^2 \le (1 + \delta_s^D) \norm{2}{v}^2
\]
holds for all $v$ that can be expressed as a linear combination of $s$ columns of $\Dict$.
With this definition of D-RIP, the main result of~\cite{Candes:2010ab} can be stated as follows:
For an arbitrary tight frame $\Dict$ and a measurement matrix $\Meas$ satisfying D-RIP with $\delta_{7s}^D < 0.6$, the solution $\hat\x$ to \eqref{eq:noisyL1} satisfies
\begin{equation}
\label{eq:sparsityBasedGuarantee}
\norm{2}{\hat\x - \x} \le C_0 \epsilon + C_1 \frac{\norm{1}{\Dict^T\x - (\Dict^T \x )_s}}{\sqrt{s}}
\end{equation}
where the constants $C_0$ and $C_1$ may depend only on $\delta_{7s}^D$, and
the notation $(c)_s$ represents a sequence obtained from a sequence $c$ by keeping the $s$-largest
values of $c$ in magnitude (and setting the other to zero).

The above recovery guarantee is one of the few---very likely the only---results
existing in the literature on \eqref{eq:noisyL1}. However, we observe that there is much room for improving the result.
We now discuss why we hold this view. For clarity and for the purpose of comparison to our
result, we consider only the case $\epsilon = 0$ for \eqref{eq:noisyL1}.

First, we note that~\cite{Candes:2010ab} implicitly uses 
the estimate of type $\norm{1}{\Oper_{\Lambda^c}\z} < \norm{1}{\Oper_\Lambda \z}$
for \eqref{eq:L1NSCsimple}. 
Hence, the main result of~\cite{Candes:2010ab} cannot be sharp in general due to the fact that the sign patterns of \eqref{eq:L1NSCsimple} are ignored\footnote{Note that the same lack of sharpness holds true for our results based on~\eqref{eq:AL1ERC}, yet we will see  that these can actually provide cosparse signal recovery guarantees in simple but nontrivial cases.}

Second, the quality of the bound $\norm{1}{\Dict^T\x - (\Dict^T \x )_s}/\sqrt{s}$ 
in \eqref{eq:sparsityBasedGuarantee} is measured in terms of how effective $\Dict^T \x$ is
in sparsifying the signal $\x$ with respect to the dictionary $\Dict$.
To explain, let us consider the synthesis $\ell_1$-minimization
\begin{equation}
\label{eq:synthesisL1expanded}
\Delta_1(\x) := \argmin_{\z\in\RR^{\ddim}} \norm{1}{\z} \subjectto \Meas\Dict\z = \Meas\x
\end{equation}
and let $\Delta_0(\x)$ be the sparsest representation of $\x$.
Applying the standard result for the synthesis $\ell_1$-minimization, we have
\[
\norm{2}{\Delta_1(\x) - \Delta_0(\x)} \le C_2 \frac{\norm{1}{\Delta_0(\x) - (\Delta_0(\x))_s}}{\sqrt{s}}
\]
provided that $\Meas\Dict$ satisfies the standard RIP with, e.g., $\delta_{2s} < \sqrt{2}-1 \approx 0.414$.
Since $\Dict$ is a tight frame, it is equivalent to
\begin{equation}
\label{eq:synthesisGuarantee}
\norm{2}{\Dict\Delta_1(\x) - \x} \le C_2 \frac{\norm{1}{\Delta_0(\x) - (\Delta_0(\x))_s}}{\sqrt{s}}.
\end{equation}
Note that both $\Delta_0(\x)$ and $\Dict^T \x$ are legitimate representations of $\x$
since $\Dict\Delta_0(\x) = \x = \Dict\Dict^T\x$.
Thus, $\Delta_0(\x)$ is sparser than $\Dict^T\x$ in general; in this sense, $\Dict^T\x$ is not effective in sparsifying $\x$.
Given this, we expect that $\norm{1}{\Delta_0(\x) - (\Delta_0(\x))_s}/\sqrt{s}$
is smaller than $\norm{1}{\Dict^T\x - (\Dict^T \x )_s}/\sqrt{s}$.
We now see that \eqref{eq:sparsityBasedGuarantee} with $\epsilon=0$ and \eqref{eq:synthesisGuarantee}
are of the same form. Furthermore, given the degree of restriction on the RIP constants
($\delta_{7s}^D < 0.6$ {\em vs.} $\delta_{2s} < 0.414$),
we can only expect that the constant $C_2$ is smaller than $C_1$.
From these considerations, \eqref{eq:sparsityBasedGuarantee} only lets us to conclude that
analysis $\ell_1$-minimization \eqref{eq:AnalysisL1} performs on par with synthesis $\ell_1$-minimization \eqref{eq:synthesisL1expanded}, or tends to perform worse.

Third, the nature of the formulation in \eqref{eq:sparsityBasedGuarantee} takes the view that
the cosparse signals are the same as the sparse synthesis signals as described 
in Section~\ref{sec:ComparisonTraditionalSparsity}.
Due to this, the only way for \eqref{eq:sparsityBasedGuarantee} to explain that the cosparse signals are
perfectly recovered by analysis $\ell_1$-minimization is to show that $\Dict^T \x$
is exactly $s$-sparse for some $s>0$ with D-RIP constant $\delta_{7s}^D < 0.6$.
Unfortunately, we can quickly observe that the situation becomes hopeless even for moderately
overcomplete $\Dict$; for example, let $\Dict$ be a $1.15$-times overcomplete random tight frame
for $\RR^{\sdim}$ and consider recovering $(\sdim-1)$-cosparse signals for the operator $\Dict^T$.
Note that $(\sdim-1)$-cosparse signals $\x$ lead to $(0.15\sdim + 1)$-sparse representation
$\Dict^T\x$. This means that we need $\delta_{7(0.15\sdim+1)}^D = \delta_{1.05\sdim+7}^D$ to be
smaller than $0.6$ to show that $\x$ can be recovered with analysis $\ell_1$, which of course
cannot happen since $\delta_\sdim^D \ge 1$.
By taking the synthesis view of the signals, \eqref{eq:sparsityBasedGuarantee} cannot explain
the recovery of the simplest cosparse signals (cosparsity $\sdim-1$) no matter what $\Meas$
is (as long as it is under-determined).

We also observe that the result of \cite{Candes:2010ab} cannot say much about the recovery of
cosparse signals with respect to the finite difference operators $\Oper_{\mathrm{DIF}}$ 
discussed in Section~\ref{sec:uniqueness}. 
This is due to the fact that $\Oper_{\mathrm{DIF}}^T$ is not a tight frame. 
How does our recovery result \eqref{eq:AL1ERC} fare in this regard?
For illustration, we took $\Omega$ to be the finite difference operator $\Oper_{\mathrm{DIF}}$ 
for $32\times32$ images (thus, $\sdim=1024$). As a test image, we took $\x$ to be
constant in the region $\{ (i,j): i,j=1,\ldots,16 \}$ and $\{ (i,j): i,j=1,\ldots,16 \}^c$.
For this admittedly simple test image, we computed the operator norm
in \eqref{eq:AL1ERC} for random measurement matrices $\Meas \in \RR^{640\times1024}$.
When the operator norm was computed for $100$ instances $\Meas$, it was observed to be less than
$0.726$. Hence, our result does give the guarantee of cosparse signal recovery in simple cases.

\section{Experiments}
\label{sec:experiment}

Empirical performance of the proposed algorithms is presented in this section.
First, we show how the algorithms perform in synthetic cosparse recovery problems.
Second, 
experimental results for an analysis-based compressed sensing are presented.

\subsection{Performance of analysis algorithms}
\label{sec:performanceAnalysis}
In this section, we apply the algorithms described in Section~\ref{sec:algorithm} to synthetic
cosparse recovery problems. 
In the experiment, the entries of $\Meas \in \RR^{\mdim\times \sdim}$ were drawn
independently from the normal distribution.
For the analysis operator $\Oper \in \RR^{\pdim\times\sdim}$, it was constructed so that its transpose
is a random tight frame with unit norm columns---we will simply say that $\Oper$ is a random tight
frame in this case.\footnote{One could also construct $\Oper$ by simply drawing the rows of it randomly and independently from
$\Sphe^{\sdim-1}$ without the tight frame constraint. We have run the experiment for such operators and observed that the
result was similar.}
Next, the co-sparsity $\cosparsity$ was chosen,
and the true or target signal $\x$ was generated randomly as described in Section~\ref{sec:GenModel}.
The observation was obtained by $\y = \Meas\x$.

We have used Matlab \texttt{cvx} package~\cite{cvx} with the precision set to \texttt{best}
for the analysis-$\ell_1$. For the final results, we used the estimate $\hat \x$ from $\ell_1$ solver to obtain an estimate
of the cosupport---the cosupport estimate was obtained by taking the indices for which
the corresponding analysis coefficient is of size less than $10^{-6}$---and then
using this cosupport and the observation $\y$ to compute the final estimate of $\x$
(this process can be considered as de-biasing.).

\begin{figure}[htbp]
\centering
\includegraphics[width=\textwidth*2/7]{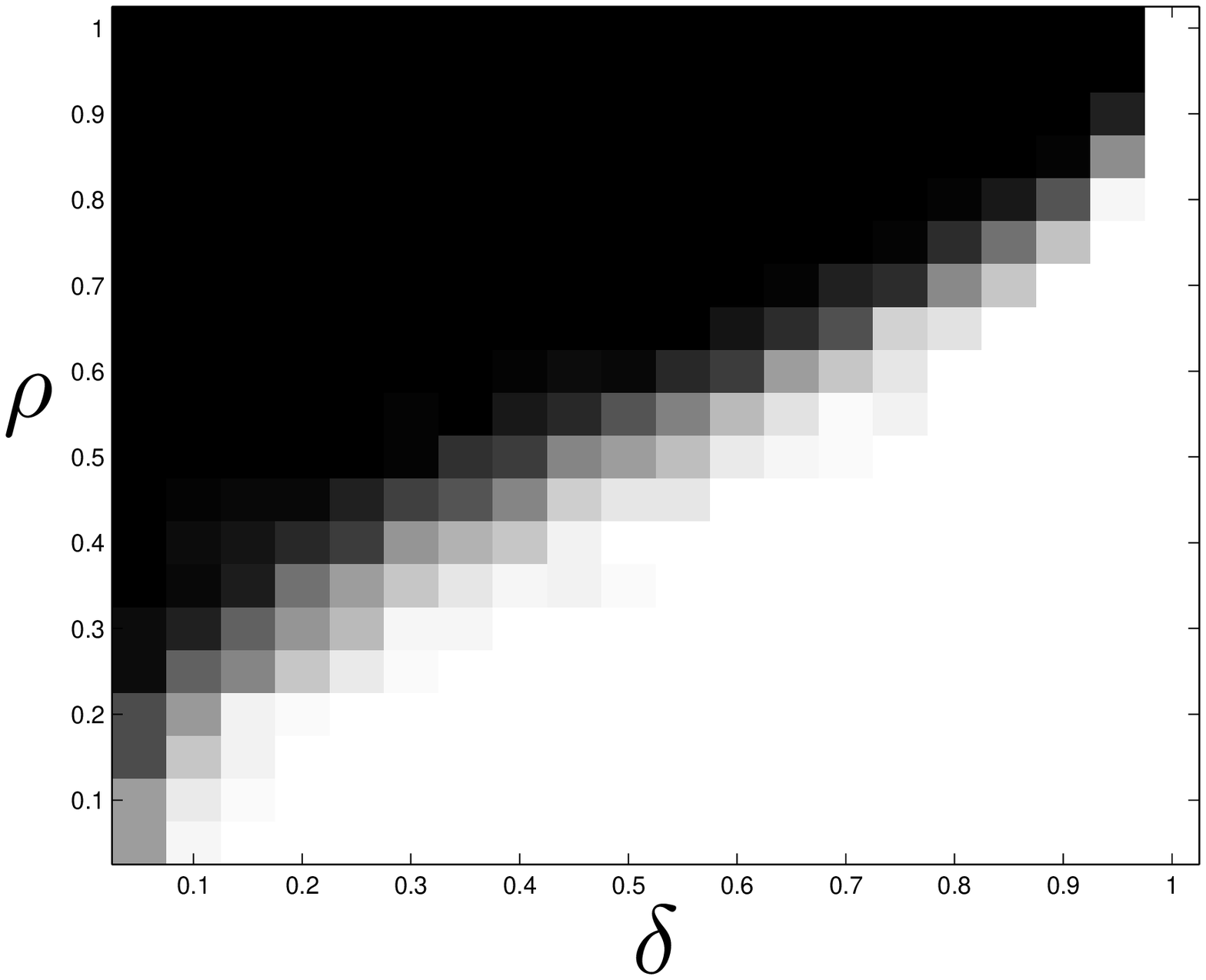}
\includegraphics[width=\textwidth*2/7]{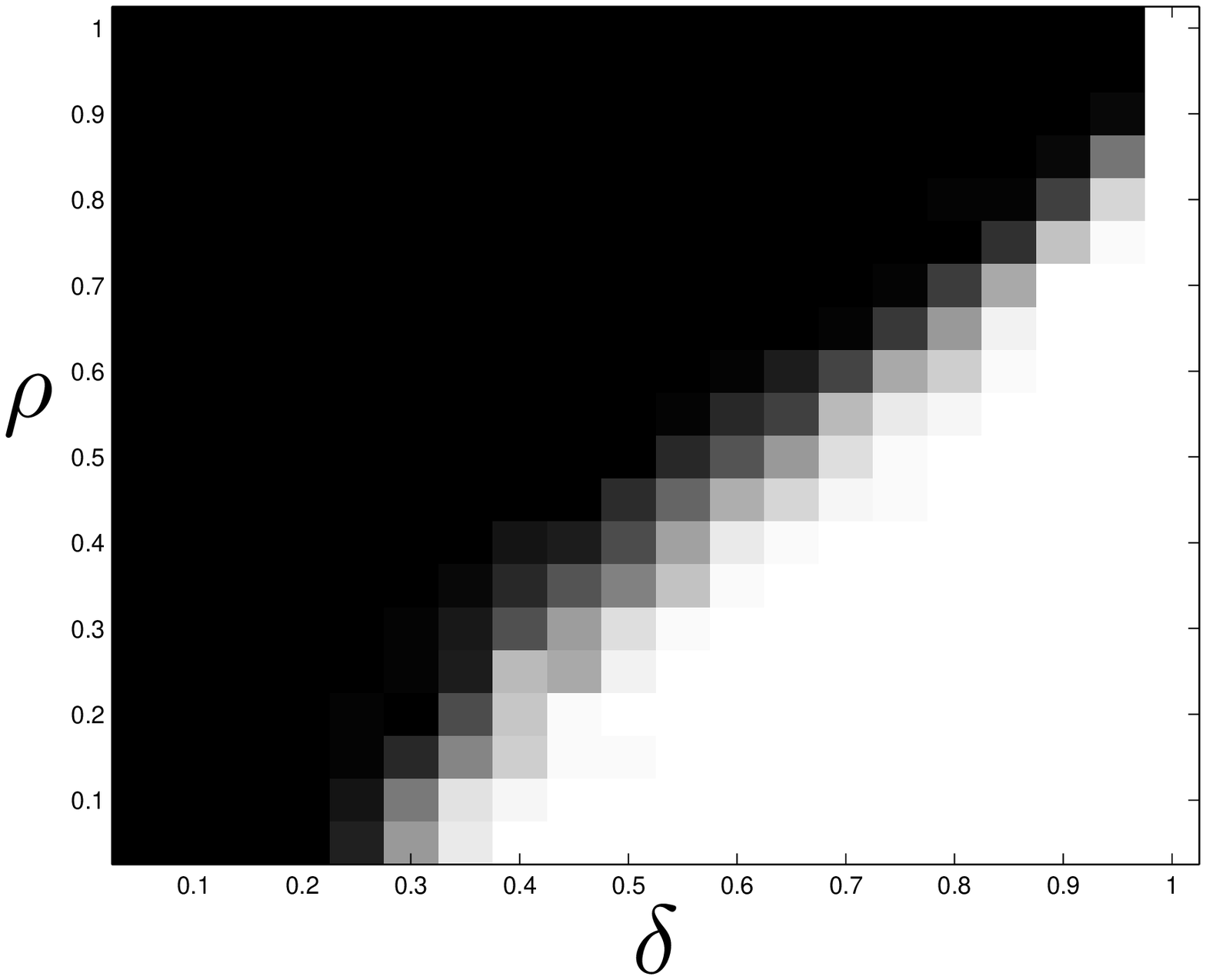}
\includegraphics[width=\textwidth*2/7]{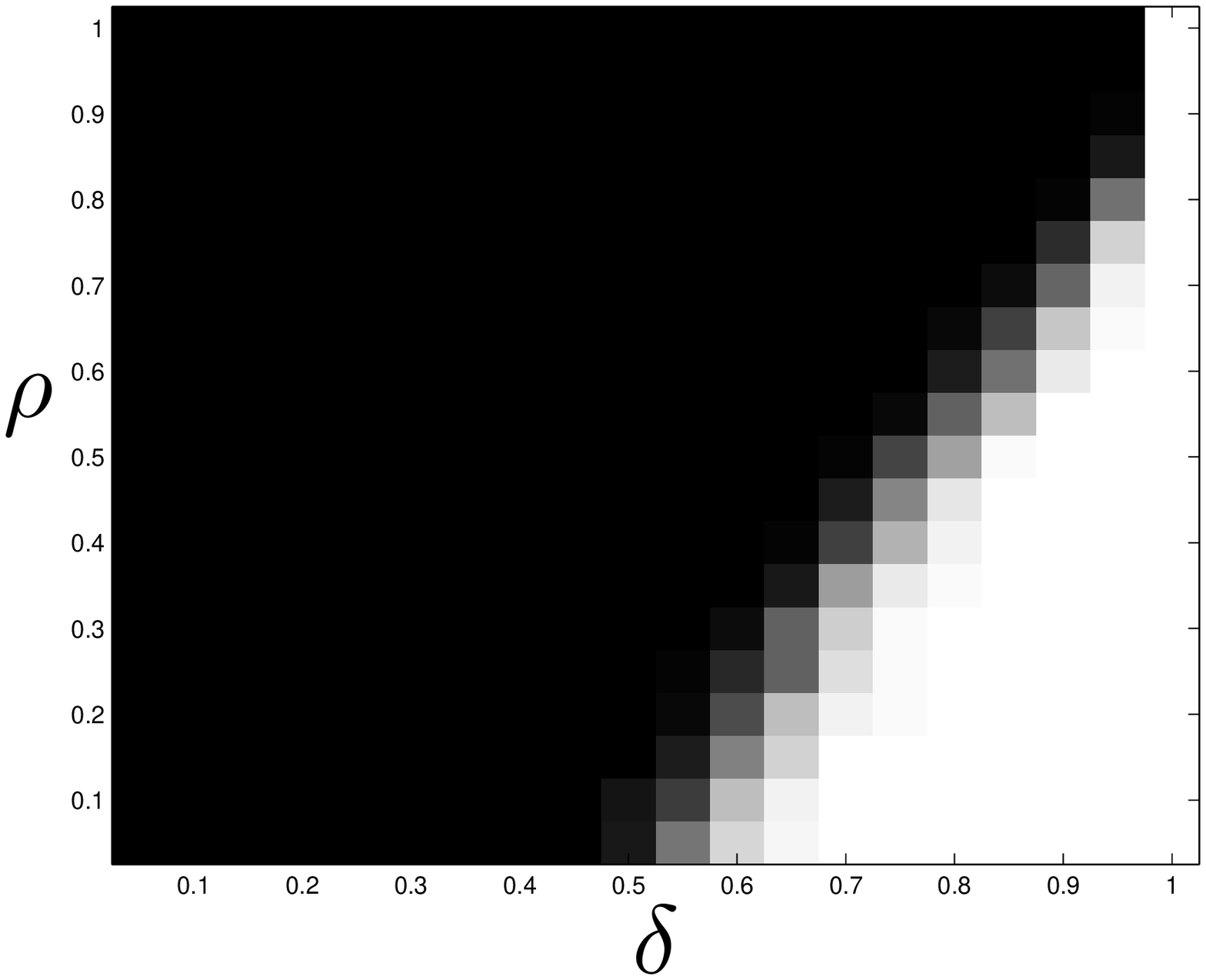}
\includegraphics[width=\textwidth*2/7]{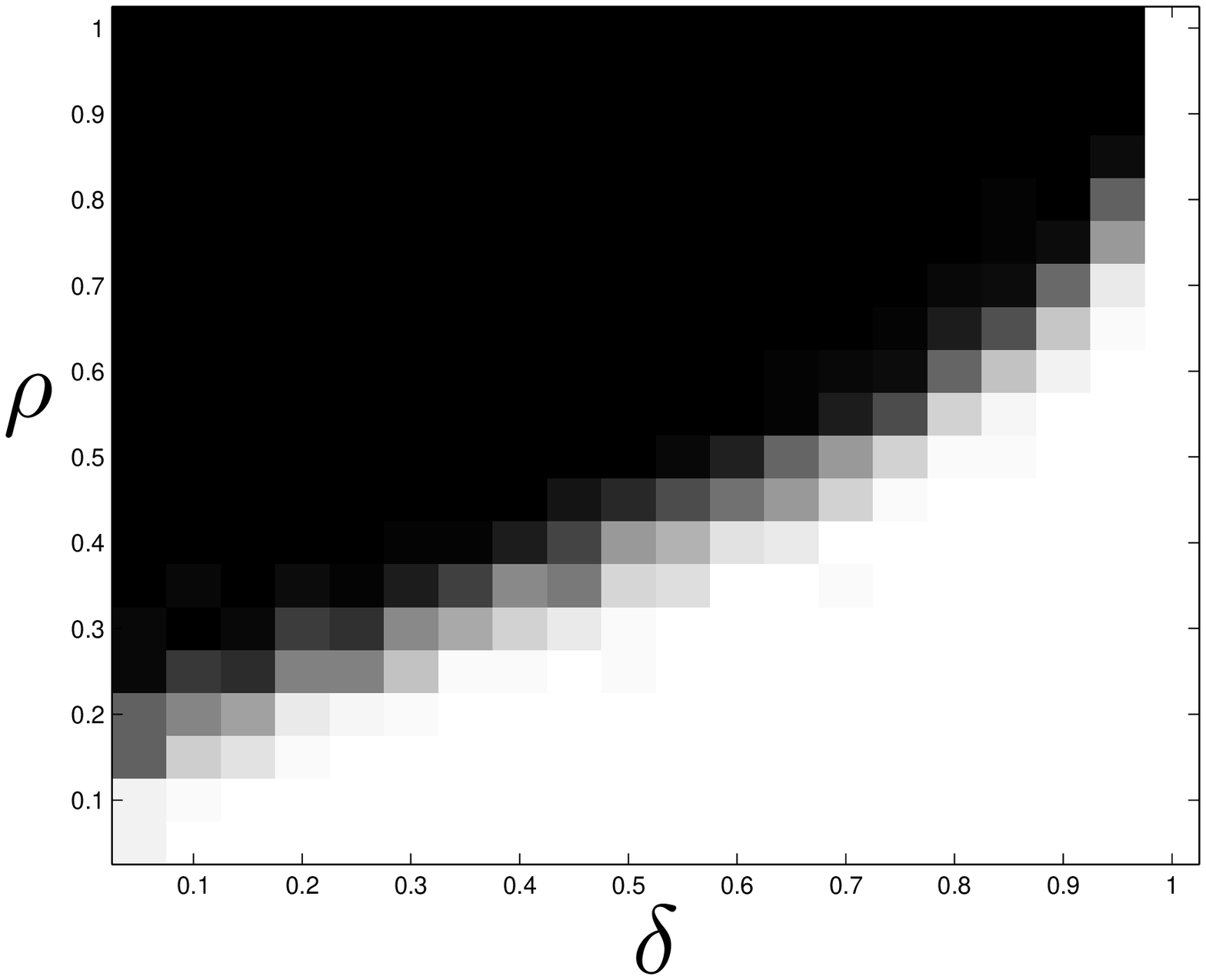}
\includegraphics[width=\textwidth*2/7]{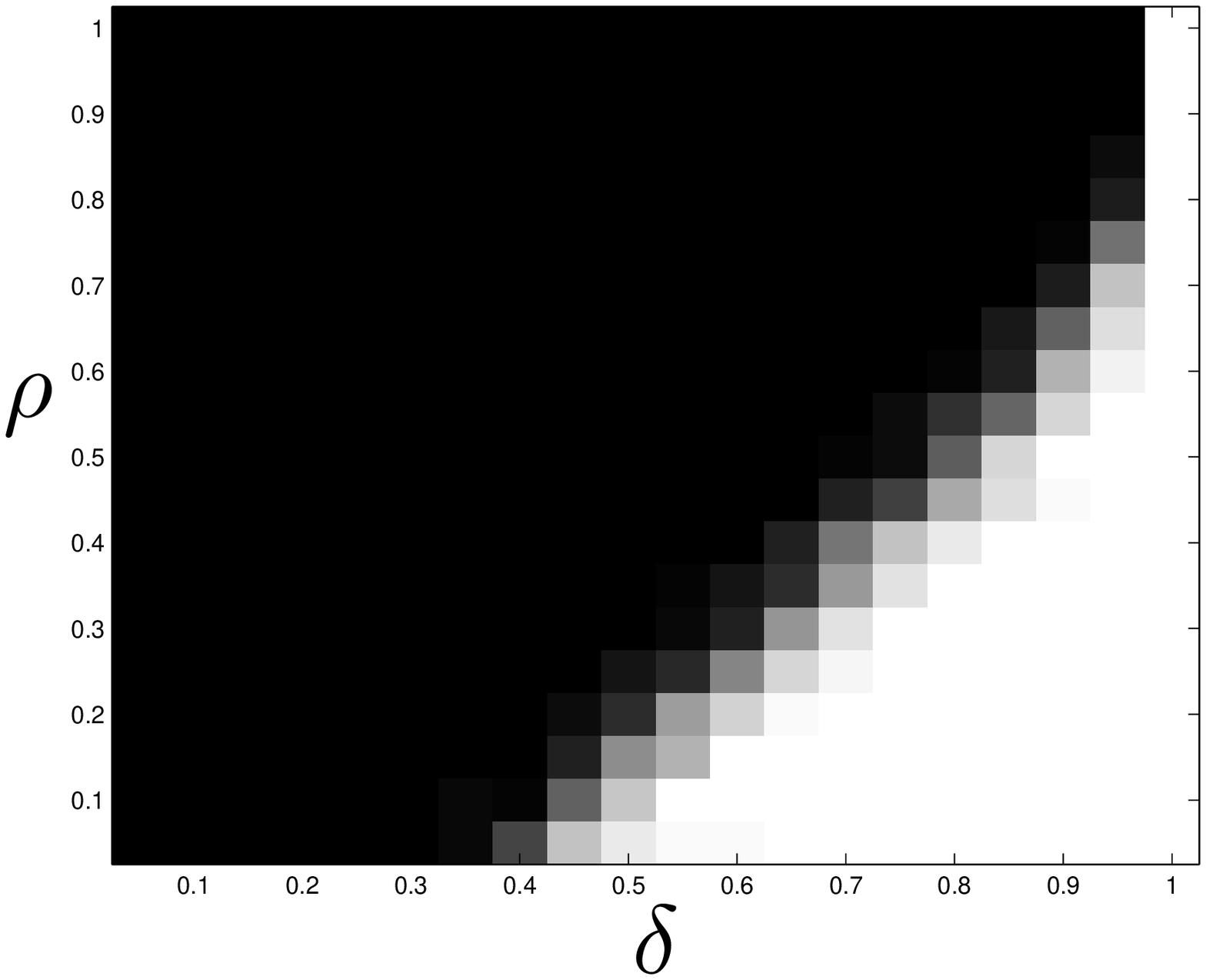}
\includegraphics[width=\textwidth*2/7]{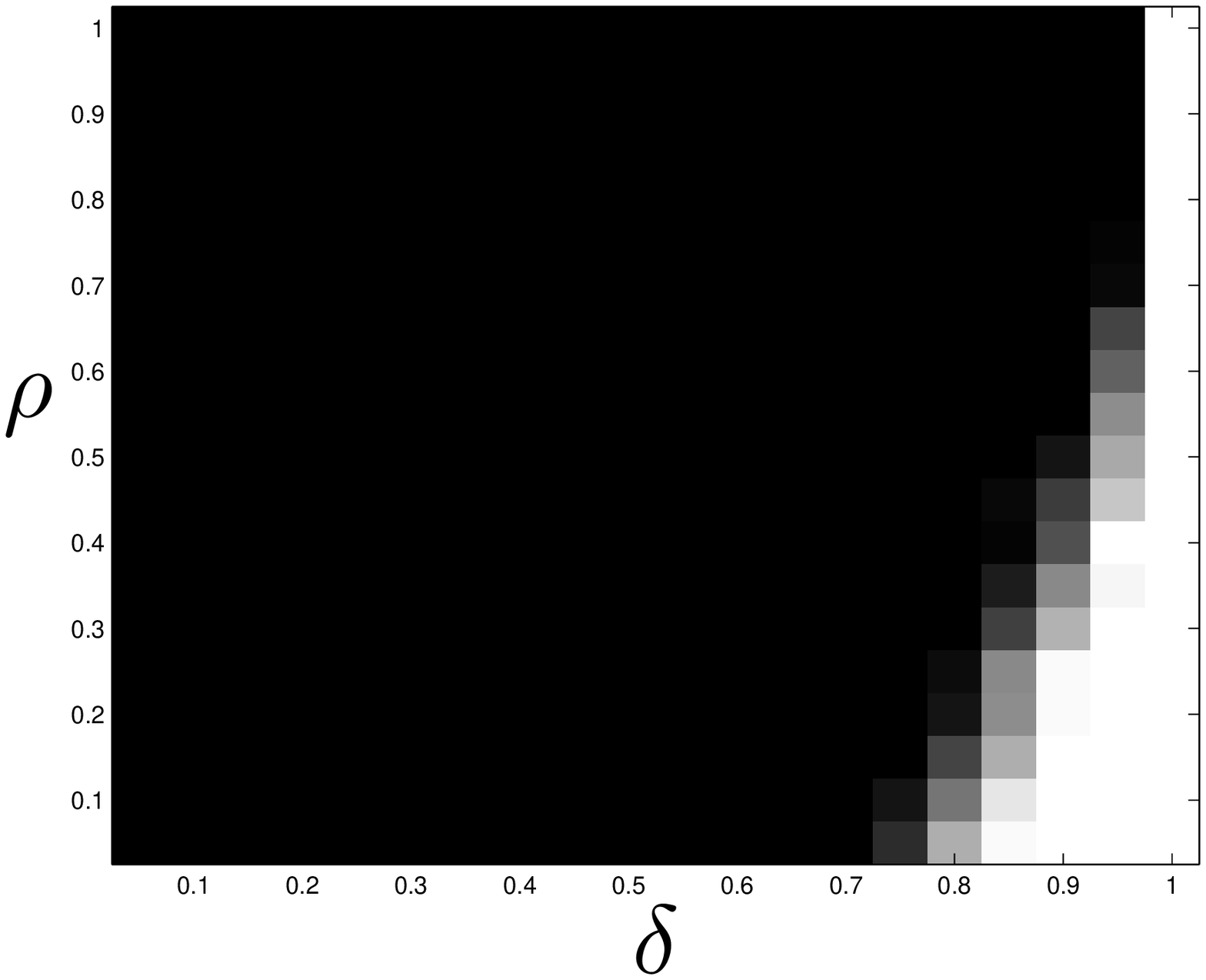}
\caption{Recovery Rate of Analysis Algorithms for $\sdim=200$. The figures
correspond to GAP (top) and L1 (bottom) with $\sigma=1$ (left), $\sigma=1.2$ (center) and $\sigma=2$ (right).}
\label{fig:perfAAlgs1}
\end{figure}

Figure \ref{fig:perfAAlgs1} 
 shows the results. In all cases, the signal dimension $\sdim$ is set to
$200$. We then varied the number $\mdim$ of measurements, the co-sparsity $\cosparsity$ of
the target signal, and the operator size $\pdim$ according to the following formulae:
\[
\mdim = \delta \sdim, \quad
\cosparsity = \sdim - \rho \mdim, \quad
\pdim = \sigma \sdim. 
\]
which is consistent with Donoho \& Tanner's notations for phase transition diagrams~\cite{Donoho:2009aa}: $\delta = \mdim/\sdim$ is the undersampling ratio, and $\rho = (\sdim-\cosparsity)/\mdim$ measures the relative dimension of the $\cosparsity$-cosparse subspaces compared to the number of measures.
For every fixed parameter triplet $(\sigma, \delta, \rho)$, 
the experiment was repeated $50$ times. A relative error of size less than $10^{-6}$
was counted as perfect recovery.
Each pixel in the diagrams corresponds to a triplet $(\sigma, \delta, \rho)$
and the pixel intensity represents the ratio of the signals recovered perfectly with
white being the 100\% success.

The figures show that the GAP can be a viable option when it comes to the cosparse
signal recovery.
What is a bit unexpected is that
GAP performs better than $\ell_1$-minimization, especially for overcomplete $\Oper$'s. Yet, it should be clear from its description that GAP has polynomial complexity, and it is tractable in practice.

An interesting phenomenon observed in the plots for overcomplete $\Oper$ is that there seems to
be some threshold $\delta_*$ such that if the observation to dimension ratio $\delta$ is less
than $\delta_*$, one could not recover any signal however cosparse it may be.
We may explain this heuristically as follows:
If $\mdim$ measurements are available, then the amount of information we have for the signal
is $c_1 \mdim$ where $c_1$ is the number of bits each observation represent.
In order to recover a cosparse signal, we need first to identify which subspace the signal belongs
to out of $\binom{\pdim}{\cosparsity}$, and then to obtain the $\sdim-\cosparsity$ coefficients
for the signal with respect to a basis of the $\sdim-\cosparsity$ dimensional subspace.
Therefore, roughly speaking, one may hope to recover the signal when
\[
c_1 \mdim \geq \log_2 \binom{\pdim}{\cosparsity} + c_1(\sdim-\cosparsity) = \log_2 \binom{\pdim}{\cosparsity} + \rho c_1\mdim.
\]
Thus, the recovery is only possible when $(1-\rho)\delta \geq \log_2 \binom{\pdim}{\sdim} / (c_1 \sdim)$.
Using the relation $\pdim=\sigma\sdim$ and Stirling's approximation, this leads to an asymptotic
relation
\[
\delta \geq (1-\rho)\delta \geq \frac{\sigma\log\sigma - (\sigma-1) \log(\sigma-1)}{c_1},
\]
which explains the phenomenon.

The calculation above and the experimental evidence from the figures confirm the intuition we had
in Section~\ref{sec:Subspaces}: The combinatorial number of low-dimensional cosparse subspaces 
arising from analysis operators in general position is not desirable.
This strengthens our view on the necessity of designing/learning analysis operators with high
linear dependencies.

\subsection{Analysis-based Compressed Sensing}
\label{sec:csrecovery}
We observed in Section~\ref{sec:performanceAnalysis} that the cosparse analysis model 
facilitates effective algorithms to recover partially observed cosparse signals. 
In this section, 
we demonstrate the effectiveness of GAP algorithm on a standard toy problem: the Shepp Logan phantom recovery problem.

We consider the following problem that is related to computed tomography (CT):
There is an image, say of size $n\times n$, which we are interested in but cannot observe directly.
It can only be observed indirectly by means of its 2D Fourier transform coefficients.
However, due to high cost of measurements or some physical limitation,
the Fourier coefficients can only be observed along a few radial lines.
These limited observations or the locations thereof can be modeled by a 
measurement matrix $\Meas$, and with the obtained observation we want to recover
the original image. As an ideal example, we consider the Shepp Logan phantom.
One can easily see that this image is a good example of cosparse signals
in $\Oper_{\mathrm{DIF}}$ which consists of all the vertical and horizontal gradients
(or one step differences).
This image has been used extensively as an example in the literature in the context of
compressed sensing (see, e.g., \cite{CandRombTao:2006b,blumensath:aiht}).

Figure~\ref{fig:losh256} is the result obtained using GAP. The number
of measurements that corresponds to $12$ radial lines is $\mdim = 3032$. Compared to
the number of pixels in the image $\sdim = 65536$, it is approximately $4.63$\%. The number of analysis atoms
that give non-zero coefficients is $\pdim-\cosparsity = 2546$. The size of $\Oper_{\mathrm{DIF}}$ is 
roughly twice the image size $\sdim=65536$, namely $\pdim=130560$. 
At first glance, this corresponds to
very high co-sparsity level ($\cosparsity=130560-2546$), or put differently,
given the high cosparsity level $\cosparsity=128014$, we seem to have required too many measurements.
However, using the conjectured near optimal necessary condition for
uniqueness guarantee~\eqref{eq:DimMaxBoundsDIF},
we may have uniqueness guarantee when $\mdim \ge 2551$. 
Also, using the sufficient condition~\eqref{eq:scUniqueness2DTV}, one would want to have
$\mdim \ge 3058$ measurements.
In view of this, the fact that GAP recovered the signal perfectly for $3032$ measurements
is remarkable!
\ifcommentout   
We remark that the popular $\ell_1$-TV-minimization using \texttt{l1 magic} \edit{obtained 
from http://www.acm.caltech.edu/l1magic/} was visually
observed to provide perfect recovery for $16$ radial lines but not less.
\fi

\begin{figure}[htbp]
\begin{center}
\includegraphics[width=\textwidth*3/10]{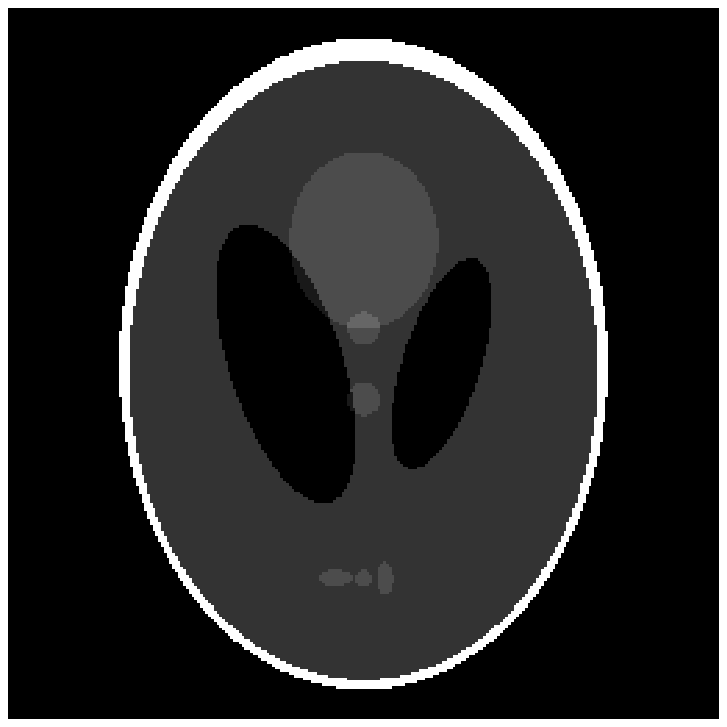}
\includegraphics[width=\textwidth*3/10]{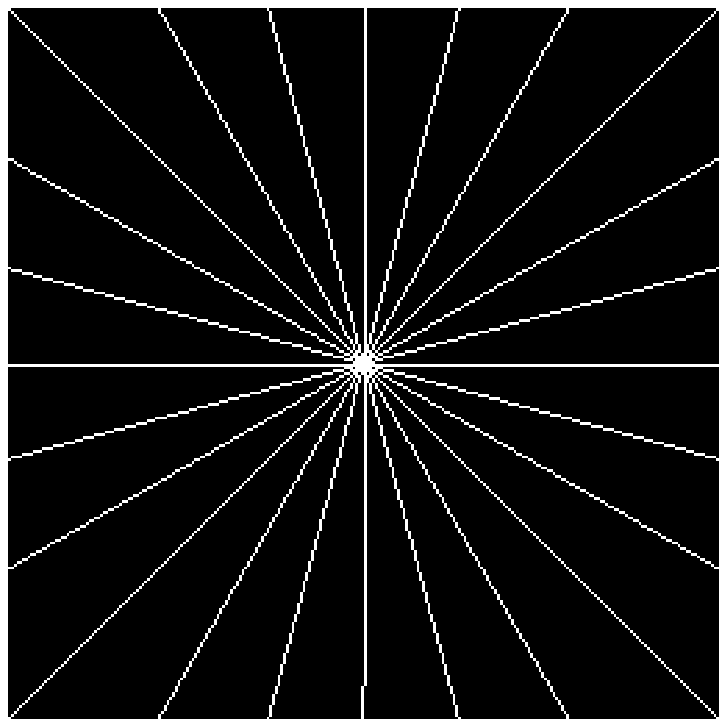}
\includegraphics[width=\textwidth*3/10]{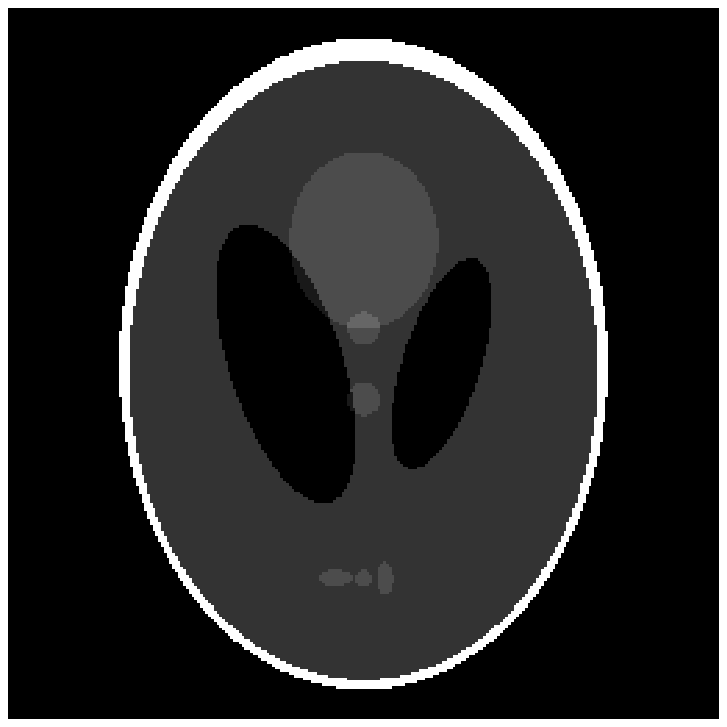}
\includegraphics[width=\textwidth*3/10]{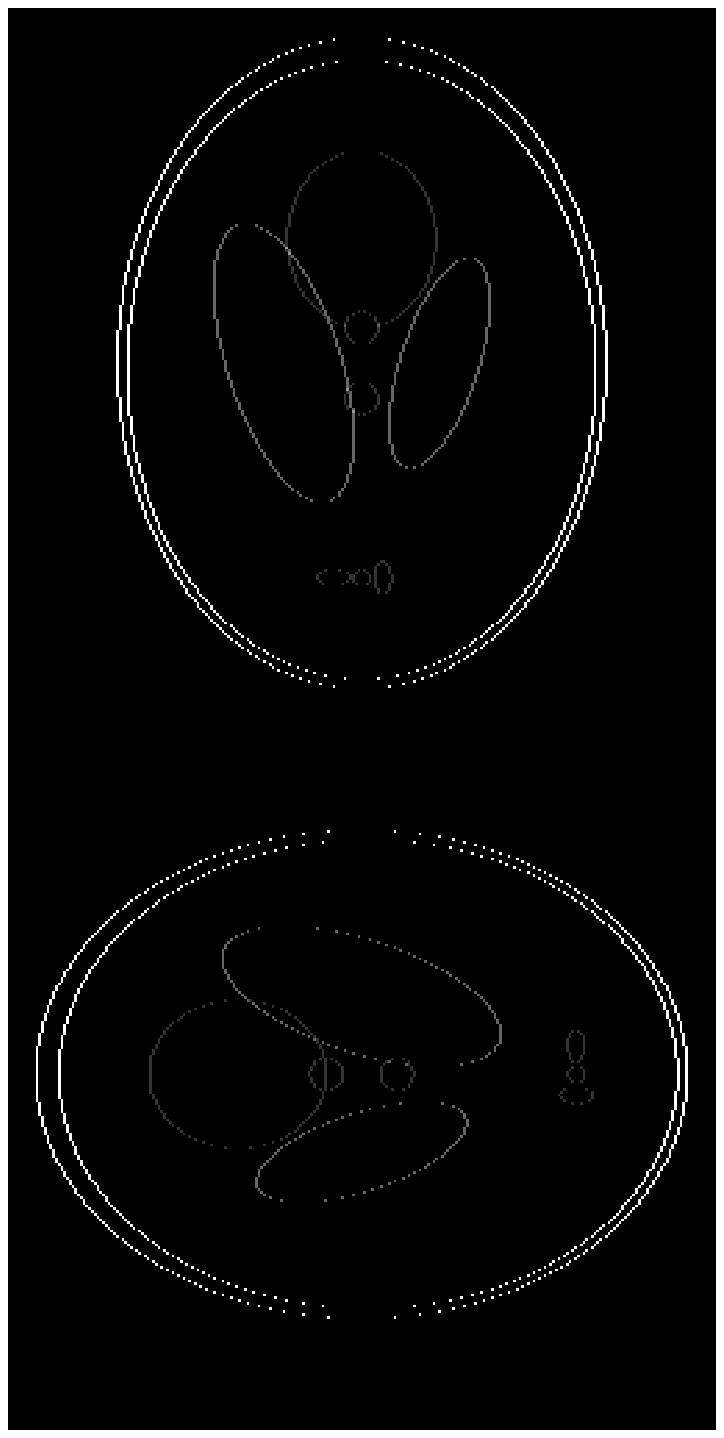}
\includegraphics[width=\textwidth*3/10]{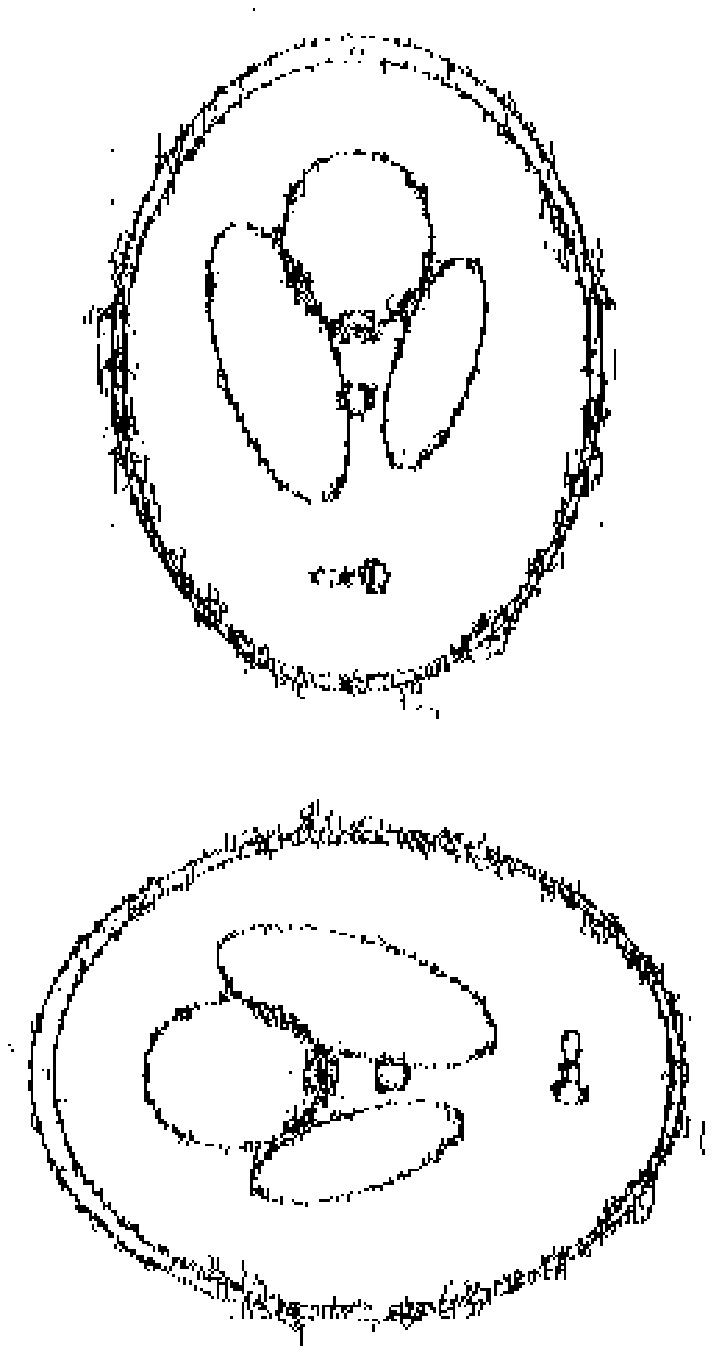}
\includegraphics[width=\textwidth*3/10]{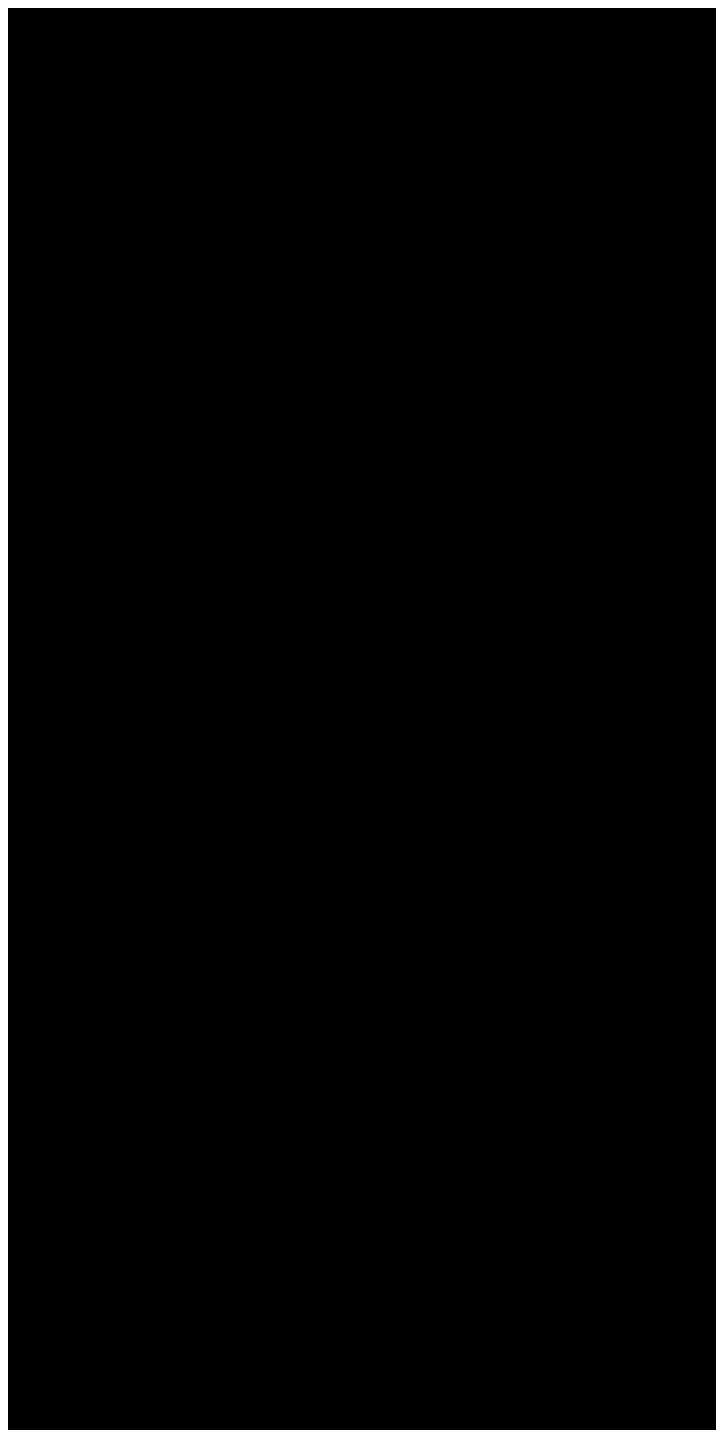}
\caption{Recovery of $256\times256$ Shepp Logan phantom image.
From top to bottom, left to right: (a) Original Image. (b) Sampling locations
of Fourier coefficients. 
(c) Reconstructed image.
(d) Locations where one-step difference of the original
image is non-zero. Upper half corresponds to the horizontal differences and
lower half the vertical differences. (e) Locations that GAP identified/eliminated
to be the ones where the differences are likely non-zero. (f) Locations that
GAP failed to identify as non-zero locations. Blank black figure indicates
that none of the non-zero locations were missed (perfect reconstruction). 
}
\label{fig:losh256}
\end{center}
\end{figure}

We have also ran the GAP algorithm for a larger sized $512\times 512$ problem. The results (not shown here) are visually similar to Figure~\ref{fig:losh256}.
In this case, the number of measurements
($\mdim = 7112$) represents approximately $2.71$\% of the image size ($\sdim = 262144$).
The number of non-zero analysis coefficients is $\pdim-\cosparsity = 5104$.
The sufficient uniqueness condition \eqref{eq:scUniqueness2DTV} gives
$\mdim \geq 6126$ as a number of measurements for the uniqueness.

\begin{remark}
Due to the large size of these problems, GAP algorithm as described in Section~\ref{sec:algorithm} had to be modified: 
We used numerical optimization to compute pseudo-inverses. 
Also, due to high computational cost, we eliminated many rows at each iteration (super
greedy) instead of one. Although this was not implemented using a selection factor, this can be interpreted as using varying selection factors $0<t_{k}<1$ along the iterations. 
\end{remark}

To conclude this section, we have repeated the $256\times256$ Shepp Logan phantom image recovery
problem for several algorithms while varying the number of radial observation lines.
Given that we know the minimal theoretical number and a theoretically sufficient number
of radial observation lines for the uniqueness guarantee, the experimental result gives
us an insight on how various algorithms actually perform in the recovery problem in relation
to the amount of observation available. Figure~\ref{fig:varyingMeasurements} shows the outcome.
The algorithms used in the experiment are the GAP, the TV-minimization from \texttt{l1magic},
the AIHT from \cite{blumensath:aiht}, and the back-projection algorithm.\footnote{The code for \texttt{l1magic} was downloaded from \texttt{http://www.acm.caltech.edu/l1magic/} and the one for AIHT
from \texttt{http://www.personal.soton.ac.uk/tb1m08/sparsify/AIHT\_Paper\_Code.zip}. The result
for the back-projection was obtained using the code for AIHT.}
The GAP and \texttt{l1magic} can be viewed as analysis-based reconstruction algorithms while
the AIHT is a synthesis-based reconstruction algorithm.
The AIHT is seen to use Haar wavelets as the synthesis dictionary, hence the algorithm implicitly
assumes that the phantom image has sparse representation in that dictionary.
We remark that while Figure~\ref{fig:varyingMeasurements} gives an impression that the AIHT
does not have any improvement over the baseline back-projection algorithm, perfect reconstructions
were observed for the former when sufficient measurements were available, which is not the
case for the latter.

\begin{figure}[htbp]
\centering
\includegraphics[width=\textwidth*8/10]{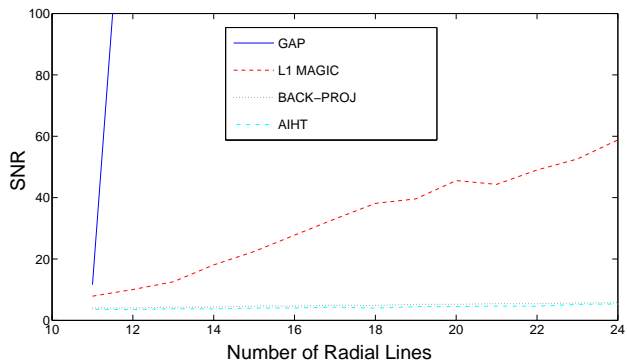}
\caption{SNR vs the number of radial observation lines in $256\times256$ Shepp Logan phantom
image recovery. The output line for the GAP is clipped due to high SNR value.}
\label{fig:varyingMeasurements}
\end{figure}

\begin{remark}
It must be noted that in our experiment, each radial line consists of $N$ pixels for an $N\times N$
image; this is in contrast to the fact that the radial lines in the existing codes, e.g. \texttt{l1magic}, have $N-1$ pixels. 
We have made appropriate changes for our experiment.
The radial lines with $N-1$ pixels do make the recovery problem more difficult and
more observations were required for perfect recovery for the GAP.
\end{remark}

\section{Conclusions and Further Work}
\label{sec:final}

In this work, we have described the cosparse analysis data model as an 
alternative to the popular sparse synthesis model.
By the description, we have shown that the cosparse analysis model is
distinctly different from the sparse synthesis one in spite of their
apparent similarities. 
In particular, treating the cosparse model as the synthesis model
by assuming that the analysis representations of cosparse signals
are sparse was demonstrated to be not very meaningful. 
Having had presented the model, we have stated conditions that guarantee
the uniqueness of cosparse solutions in the context of linear inverse
problems based on the work \cite{Lu:2008ab}.
We then presented some algorithms for the cosparse recovery problem
and provided some theoretical result for the analysis $\ell_1$-minimization
and the newly proposed GAP.
Lastly, the model and the proposed algorithm were validated via experimental
results.

Although our work in this paper shows that the cosparse analysis model
together with algorithms based on the model is an interesting subject to
study and viable for practical applications, there are much more to be
learned about the model. Among possible future avenues for related research,
we list the following:
1) The stability of measurement matrices $\Meas$ on the analysis
union of subspaces $\cup_{\cosupp} \aspace_\cosupp$;
2) The effect of noise on the cosparse analysis model and associated algorithms;
3) The designing / learning of analysis operators for classes of signals of interest;
4) More concrete and/or optimal theoretical success guarantees for
algorithms, with a better understanding of the role of linear dependencies between rows of the analysis operator.

\appendix
\section{Proof of Theorem~\ref{thm:L1NSC} and Corollary~\ref{thm:L1ERC}}
\label{sec:proofL1}

Let us begin with the simplest case.
For a fixed $\x_0$ with cosupport $\cosupp$,
the analysis $\ell_1$-minimization~\eqref{eq:analysisL1}
recovers $\x_0$ as the unique minimizer if and only if
\[
\left| \innerp{\Oper_{\cosupp^c}\z}{\sign(\Oper_{\cosupp^c}\x_0)} \right|
< \norm{1}{\Oper_\cosupp\z}, \quad \forall \z \in \Null(\Meas),\ \z\neq0.
\]
This follows from two facts: a) the above condition characterizes strict local minima of the optimization problem; b) the optimization problem is convex and can have at most one strict local minimum, which must be the unique global optimum.
From this, we derive the following:
The analysis $\ell_1$-minimization~\eqref{eq:analysisL1} recovers $\x_0$ 
as a unique minimizer for \emph{any} $\x_0$ with cosupport $\cosupp$, if and only if
\[
\sup_{\x_\cosupp : \Oper_\cosupp\x_\cosupp = 0}
\left| \innerp{\Oper_{\cosupp^c}\z}{\sign(\Oper_{\cosupp^c}\x_\cosupp)} \right|
< \norm{1}{\Oper_\cosupp\z}, \quad \forall \z \in \Null(\Meas),\ \z\neq0
\]
and the proof of Theorem~\ref{thm:L1NSC} is complete.

To obtain Corollary~\ref{thm:L1ERC}, observe that we can remove the constraint $\z\in\Null(\Meas)$ by writing
$\z = \OMeas^T\alpha$ where $\OMeas^T$ is an $\sdim \times (\sdim-\mdim)$ basis matrix for $\Null(\Meas)$ and $\alpha \in \RR^{\sdim-\mdim}$
is an appropriate coefficient sequence. Thus, the necessary and sufficient condition becomes
\begin{align}
\sup_{\x_\cosupp : \Oper_\cosupp\x_\cosupp = 0}
&\left| \innerp{\Oper_{\cosupp^c} \OMeas^T\alpha}{\sign(\Oper_{\cosupp^c}\x_\cosupp)} \right|
< \norm{1}{\Oper_\cosupp \OMeas^T \alpha},\quad \forall \alpha \in \RR^{\sdim-\mdim},\ \alpha \neq 0.
\intertext{Since the $\cosparsity \times (\sdim-\mdim)$ matrix $\Oper_\cosupp \OMeas^T$ is thin ($\cosparsity \geq \sdim-\mdim$) and full-rank, defining $\beta := \Oper_\cosupp \OMeas^T \alpha$, we have $\alpha = (\Oper_\cosupp \OMeas^T)^\dagger \beta$.
Therefore, a {\em sufficient} (but no longer necessary) recovery condition for analysis $\ell_{1}$-minimization is}
\label{eq:AERC1}
\sup_{\x_\cosupp : \Oper_\cosupp\x_\cosupp = 0}
&\left| \innerp{\Oper_{\cosupp^c} \OMeas^T (\Oper_\cosupp \OMeas^T)^\dagger \beta}
              {\sign(\Oper_{\cosupp^c}\x_\cosupp)} \right|
< \norm{1}{\beta},\quad \forall \beta \in \RR^{\cosparsity}, \beta \neq 0.
\intertext{Equivalently, for all $\x_\cosupp$ with $\Oper_\cosupp\x_\cosupp = 0$,}
\label{eq:AERC2}
\sup_{\norm{1}{\beta}=1}
&| \innerp{\beta}  {
   ( \OMeas\Oper_\cosupp^T)^\dagger \OMeas\Oper_{\cosupp^c} ^T
   \sign(\Oper_{\cosupp^c}\x_\cosupp)}|
< 1
\intertext{
that is to say
}
\label{eq:AERC3}
\sup_{\x_\cosupp : \Oper_\cosupp\x_\cosupp = 0}
&\norm{\infty}         
{
( \OMeas\Oper_\cosupp^T)^\dagger \OMeas\Oper_{\cosupp^c} ^T               
\sign(\Oper_{\cosupp^c}\x_\cosupp)}
< 1.
\end{align}
Condition~\eqref{eq:L1NSC} follows from the above.
To conclude the proof of Corollary~\ref{thm:L1ERC}, we note that since $\norm{\infty}{\sign(\Oper_{\cosupp^c}\x_\cosupp)} = 1$, the left hand side of \eqref{eq:AERC3}
is bounded above by
\[
\opnorm{\infty\to\infty}{
{( \OMeas\Oper_\cosupp^T)^\dagger \OMeas\Oper_{\cosupp^c} ^T}}
=
\opnorm{1\to1}{
{\Oper_{\cosupp^c} \OMeas^T (\Oper_\cosupp \OMeas^T)^{\dagger}}}.
\]
Therefore, condition~\eqref{eq:AL1ERC} implies \eqref{eq:L1NSC} and the proof is complete.

\section{Proof of Lemma \ref{thm:GAPCoefRel}}
\label{sec:proofGAP}

Since $\hat \x_0$ is the solution of
$\arg\min_{\x} \norm{2}{\Oper \x}^2 \text{ subject to } \y = \Meas \x$,
applying the Lagrange multiplier method, we observe that $\hat \x_0$ satisfies
\[
\Oper^T\Oper\hat \x_0 = \Meas^T \v \quad\text{and}\quad
\Meas \hat \x_0 = \y,
\]
for some $\v \in \RR^m$. From the first equation, we obtain
$\v = (\Meas^{T})^{\dagger}\Oper^{T} \Oper \hat\x_{0}$.
Putting this back in, one gets
\(
\left( \Id - \Meas^T(\Meas^{T})^{\dagger} \right) \Oper^T\Oper \hat \x_0 = 0.
\)
The last equation can be written as
$(\OMeas^T)^\dagger\OMeas \Oper^T\Oper \hat \x_0 = 0$, where $(\OMeas^T)^\dagger$ is
the pseudo-inverse of $\OMeas^T$.
Thus,
\[
\OMeas \Oper^T\Oper \hat \x_0 = 0.
\]
Now, we split
$
\Oper^T\Oper = \Oper_\cosupp^T \Oper_\cosupp + \Oper_{\cosupp^c}^T \Oper_{\cosupp^c}
$
and write
\[
\OMeas \Oper_\cosupp^T \Oper_\cosupp \hat \x_0 = - \OMeas \Oper_{\cosupp^c}^T \Oper_{\cosupp^c} \hat \x_0.
\]
Since $\Oper_\cosupp \x_0 = 0$, we can also write
\begin{equation}\label{eq:gaprec1}
\OMeas \Oper_\cosupp^T \Oper_\cosupp \u = - \OMeas \Oper_{\cosupp^c}^T \Oper_{\cosupp^c} \hat \x_0
\end{equation}
with $\u = \hat\x_0 - \x_0$. 
On the other hand, from $\Meas \hat \x_0 = \y = \Meas\x_0$, we have
\(
\Meas \u = 0.
\)
This means that $\u$ can be expressed as $\u =: \OMeas^T \w$ for some $\w$. 
Plugging this into \eqref{eq:gaprec1},
we have
\[
\OMeas \Oper_\cosupp^T \Oper_\cosupp \OMeas^T \w = - \OMeas \Oper_{\cosupp^c}^T \Oper_{\cosupp^c} \hat \x_0.
\]
Hence, 
$
\w = - \left(\OMeas \Oper_\cosupp^T \Oper_\cosupp \OMeas^T\right)^{-1} \OMeas \Oper_{\cosupp^c}^T \Oper_{\cosupp^c} \hat \x_0.
$
This gives us
\[
\hat\x_0 - \x_0 = \u = - \OMeas^T \left(\OMeas \Oper_\cosupp^T \Oper_\cosupp \OMeas^T\right)^{-1} \OMeas \Oper_{\cosupp^c}^T \Oper_{\cosupp^c} \hat \x_0.
\]
Again, using $\Oper_\cosupp \x_0 = 0$, we have
\[
\Oper_\cosupp \hat\x_0 = - \Oper_\cosupp \OMeas^T \left(\OMeas \Oper_\cosupp^T \Oper_\cosupp \OMeas^T\right)^{-1} \OMeas \Oper_{\cosupp^c}^T \Oper_{\cosupp^c} \hat \x_0 = -(\OMeas\Oper_\cosupp^T)^{\dagger}  \OMeas \Oper_{\cosupp^c}^T \Oper_{\cosupp^c} \hat \x_0.
\]

\section{Proof of Proposition \ref{thm:Uniqueness2DTV}}
\label{sec:proofsGraph}

All the statements in this section are about a 2D regular graph
consisting of $\sdim=N\times N$ vertices ($V$) and the vertical and horizontal edges ($E$) 
connecting these vertices.
To prove the proposition, we will start with two simple lemmas.

\begin{lemma}
\label{thm:oneComponent}
For a fixed $\cosparsity$, the value 
\[
\alpha(\cosparsity) := 
      \min_{\cosupp \subset E: |\cosupp| \ge \cosparsity} \{ |V(\cosupp)| - J(\cosupp) \}
\]
is achieved for a subgraph $(V(\cosupp),\cosupp)$---we will simply identify $\cosupp$ with
the subgraph from here on---satisfying $|\cosupp| = \cosparsity$ and $J(\cosupp) = 1$.
\end{lemma}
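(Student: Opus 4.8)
I would prove the two assertions --- that the minimum is attained with $|\cosupp|=\cosparsity$ and with $J(\cosupp)=1$ --- in turn. The first is exactly the nesting observation made just before the proposition: enlarging $\cosupp$ never decreases $|V(\cosupp)|-J(\cosupp)$. Concretely, deleting one edge $e$ from $\cosupp$ leaves $|V(\cosupp)|-J(\cosupp)$ unchanged when $e$ lies on a cycle of the subgraph $\cosupp$, and decreases it by $1$ otherwise (a bridge whose removal creates an extra component, or a pendant/isolated edge whose removal drops a vertex). Since $E$ is finite the minimum defining $\alpha(\cosparsity)$ is attained, and from any minimizer one may delete edges one at a time, never raising the objective, until exactly $\cosparsity$ remain; so from now on $|\cosupp|=\cosparsity$.

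For the second assertion, write a minimizer $\cosupp$ as the disjoint union of its components $C_1,\dots,C_J$, with $C_i$ carrying $v_i$ vertices and $e_i$ edges, so that $\sum_i e_i=\cosparsity$ and $|V(\cosupp)|-J(\cosupp)=\sum_i(v_i-1)$. Let $M(v)$ denote the maximum number of edges of a connected subgraph of the $N\times N$ grid on $v$ vertices; the plan is to use the classical extremal identity $M(v)=2v-\lceil 2\sqrt v\,\rceil$, attained by a ``near-square spiral'' region whose bounding box has side $\lceil\sqrt v\,\rceil$. The heart of the argument is that two components can be fused at no cost: there is a connected grid subgraph $C'$ with at most $v_1+v_2-1$ vertices and at least $e_1+e_2$ edges. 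I would take $C'$ to be the spiral on $v':=v_1+v_2-1$ vertices; since $v'\le|V(\cosupp)|\le\sdim=N^2$, its bounding box has side $\lceil\sqrt{v'}\,\rceil\le N$, so $C'$ embeds in the grid, and $e(C')=M(v')\ge M(v_1)+M(v_2)\ge e_1+e_2$, the middle step being the superadditivity $M(p+q-1)\ge M(p)+M(q)$.

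Superadditivity is a short computation I would carry out as follows: with $A:=\lceil 2\sqrt p\,\rceil\ge 2$ and $B:=\lceil 2\sqrt q\,\rceil\ge 2$ we have $4p\le A^2$, $4q\le B^2$, hence $4(p+q-1)\le A^2+B^2-4\le(A+B-2)^2$ because $(A-2)(B-2)\ge 0$; therefore $\lceil 2\sqrt{p+q-1}\,\rceil\le A+B-2$ and $M(p+q-1)\ge 2(p+q-1)-(A+B-2)=M(p)+M(q)$. Iterating the fusion collapses $\cosupp$ into a single connected subgraph $C'$ with $|V(C')|-1\le\sum_i(v_i-1)=\alpha(\cosparsity)$ and $|C'|\ge\cosparsity$, so by minimality $|V(C')|-1=\alpha(\cosparsity)$. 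Finally I would trim $C'$ back to exactly $\cosparsity$ edges by repeatedly deleting an edge lying on a cycle: this preserves $|V(C')|$, the property $J(C')=1$, and the objective; such an edge exists at every step, since a connected graph with more than $|V|-1$ edges has a cycle and throughout the trimming $|C'|\ge\cosparsity\ge\alpha(\cosparsity)=|V(C')|-1$ (here $\alpha(\cosparsity)\le\cosparsity$ because a connected subgraph with $\cosparsity$ edges has at most $\cosparsity+1$ vertices). The surviving $\cosupp$ then satisfies $|\cosupp|=\cosparsity$ and $J(\cosupp)=1$.

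The main obstacle I anticipate is geometric rather than combinatorial: one cannot merge components simply by translating one onto another, because a translate may protrude from the $N\times N$ frame (e.g.\ when a component already occupies an entire boundary row, there is no room alongside it). This is precisely what forces the detour through the canonical near-square shape, and hence the use of the identity $M(v)=2v-\lceil 2\sqrt v\,\rceil$ together with the bounding-box bound $\lceil\sqrt v\,\rceil\le N$ for $v\le N^2=\sdim$. Once those ingredients are in place, the rest --- the $(A-2)(B-2)\ge 0$ estimate and the cycle-trimming bookkeeping --- is elementary.
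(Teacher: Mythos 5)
Your proof is correct, but the key step is carried out by a genuinely different mechanism than the paper's. To merge components, the paper translates one of the two components until it first touches the other (using that two disjoint components cannot both meet all four sides of the $N\times N$ frame), counts the $t$ coincident vertices and $s\le t-1$ coincident edges at the moment of first contact, and then restores the edge budget by grafting $s$ edges back on. You instead discard the geometry of the minimizer altogether and re-embed the whole edge budget as a canonical near-square spiral, letting the Harary--Harborth count $M(v)=2v-\lceil 2\sqrt{v}\,\rceil$ and its superadditivity $M(p+q-1)\ge M(p)+M(q)$ do the work; your $(A-2)(B-2)\ge 0$ computation is correct, as is the bounding-box check that the spiral fits whenever $v\le\sdim$. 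The trade-off is clear: the paper's argument is elementary and self-contained but informal exactly where yours is airtight (the direction in which a component can be shifted, and why at most $t-1$ edges coincide at first contact, are asserted rather than proved), whereas yours imports a nontrivial classical extremal identity --- and note that your superadditivity step uses \emph{both} halves of it, the spiral construction for the lower bound on $M(p+q-1)$ and the matching upper bound on $M(p)$ and $M(q)$. That import is very much in the spirit of the paper, which re-derives weaker versions of both halves in Lemma~\ref{thm:cornerGain} and Lemma~\ref{thm:lowerbound2dTV}; indeed, granting the exact identity would let you sharpen Proposition~\ref{thm:Uniqueness2DTV} directly. Your explicit case analysis for the reduction to $|\cosupp|=\cosparsity$ (cycle edge, bridge, pendant, isolated edge) and the final cycle-trimming bookkeeping are also more careful than the paper's one-line ``it is not difficult to check.''
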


\begin{proof}
It is not difficult to check that the minimum if achieved for $\cosupp$ with $|\cosupp| = \cosparsity$. Thus, we will assume $|\cosupp| = \cosparsity$.

Now, we need to show that there is also a $\cosupp$ with $J(\cosupp)=1$.
Suppose that $\tilde\cosupp$ with $|\tilde\cosupp|=\cosparsity$ achieves $\alpha(\cosparsity)$
and $J(\tilde\cosupp) > 1$. We will show that we can obtain $\cosupp$ from $\tilde\cosupp$
that also achieves the value $\alpha(\cosparsity)$, and $|\cosupp|=\cosparsity$ and $J(\cosupp)=1$.
For simplicity, we will consider the case $J(\tilde\cosupp) = 2$ only; one can deal with other cases
by the repetition of the same argument.

Let $\tilde\cosupp_1$ and $\tilde\cosupp_2$ be the two connected components of $\tilde\cosupp$.
Note that on a 2D regular graph, we can shift a subgraph horizontally or vertically unless
the subgraph has vertices on all four boundaries of $V$. 
Since $\tilde\cosupp_1$ and $\tilde\cosupp_2$ are disconnected, not all of them can have vertices
on all four boundaries of $V$. Therefore, one of them, say $\tilde\cosupp_1$, can be shifted
towards the other. Let us consider the first moment when they touched each other.
Let $t$ be the number of vertices that coincided. Then, at most $t-1$ edges must have coincided.
Thus, denoting the number of edges coincided by $s < t$,
the resulting subgraph $\tilde\cosupp'$ has $|V(\tilde\cosupp)|-t$ vertices and
$|\tilde\cosupp|-s$ edges and one connected components.
Now let $\cosupp$ be a subgraph obtained from $\tilde\cosupp'$ by adding $s$ additional edges that
are connected to $\tilde\cosupp'$.
Then, 
\[
|V(\cosupp)| \le |V(\tilde\cosupp')| + s \le |V(\tilde\cosupp)|-t + s,
\] 
$|\cosupp| = |\tilde\cosupp| = \cosparsity$, and $J(\cosupp) = 1$.
Hence,
\[
|V(\cosupp)|-J(\cosupp) \le |V(\tilde\cosupp)|-t + s - 1
= |V(\tilde\cosupp)| - J(\tilde\cosupp) - t + s + 1
\le |V(\tilde\cosupp)| - J(\tilde\cosupp),
\]
which is what we wanted to show.
\end{proof}

For the next lemma, let us define the degree $\delta_\cosupp(v)$ of a vertex $v\in V(\cosupp)$:
\[
\delta_\cosupp(v) := | \{ e \in \cosupp : v \in e \} |
\]
where $v\in e$ signifies that $v$ is a vertex of the edge $e$.
That is, $\delta_\cosupp(v)$ is the number of edges in $\cosupp$ that start/end at $v$.

\begin{lemma}
\label{thm:cornerGain}
For a non-empty $\cosupp \subset E$,
\[
4|V(\cosupp)| \ge \sum_{v\in V(\cosupp)} \delta_\cosupp(v) + 4
\]
holds.
\end{lemma}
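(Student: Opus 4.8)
The plan is to reduce everything to the grid structure of the underlying graph. Since $E$ consists only of horizontal and vertical edges of the $N\times N$ lattice, every vertex has degree at most $4$, so $\delta_\cosupp(v)\le 4$ for every $v\in V(\cosupp)$ and the asserted inequality is equivalent to the statement that the total ``degree deficit''
\[
\sum_{v\in V(\cosupp)}\bigl(4-\delta_\cosupp(v)\bigr)\ \ge\ 4 .
\]
Every term of this sum is nonnegative, so it is enough to exhibit \emph{two distinct} vertices of $V(\cosupp)$, each incident (within $\cosupp$) to at most $2$ of the edges of the lattice.

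To produce these two vertices I would make extremal (lexicographic) choices with respect to the integer coordinates. Write $V(\cosupp)\subseteq\{1,\dots,N\}^2$, let $x_{\min}$ and $x_{\max}$ be the smallest and largest first coordinates that occur in $V(\cosupp)$, and set
\[
v_1=(x_{\min},y^*),\qquad v_2=(x_{\max},y^{**}),
\]
where $y^*$ is the smallest second coordinate among vertices of $V(\cosupp)$ with first coordinate $x_{\min}$, and $y^{**}$ the largest second coordinate among those with first coordinate $x_{\max}$. Any edge of $\cosupp$ incident to a vertex forces its other endpoint to lie in $V(\cosupp)$. Hence $v_1$ can be incident only to the edges going ``right'' and ``up'' from it: its ``left'' neighbour is excluded by minimality of $x_{\min}$, and its ``down'' neighbour by minimality of $y^*$. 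Thus $\delta_\cosupp(v_1)\le 2$, and by the symmetric argument $v_2$ can be incident only to its ``left'' and ``down'' neighbours, so $\delta_\cosupp(v_2)\le 2$.

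It remains to verify $v_1\neq v_2$, which is the only place a small case check is needed. If $x_{\min}\neq x_{\max}$ this is clear from the first coordinates; if $x_{\min}=x_{\max}$, then $v_1$ and $v_2$ are the lowest and highest vertices of $V(\cosupp)$ in the unique occupied column, and they coincide only when $|V(\cosupp)|=1$ --- impossible, since $\cosupp$ is non-empty and therefore contains an edge, forcing $|V(\cosupp)|\ge 2$. Consequently
\[
\sum_{v\in V(\cosupp)}\bigl(4-\delta_\cosupp(v)\bigr)\ \ge\ \bigl(4-\delta_\cosupp(v_1)\bigr)+\bigl(4-\delta_\cosupp(v_2)\bigr)\ \ge\ 2+2\ =\ 4,
\]
which rearranges to the claim. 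I do not expect a genuine obstacle; the only care needed is (i) extracting the bound $\delta_\cosupp(v)\le 2$ rather than merely $\le 3$ at an extremal vertex --- this is exactly why I take the lexicographically extremal point, not just one with extremal first coordinate --- and (ii) confirming that the two chosen vertices are truly distinct, which is handled by the case check above.
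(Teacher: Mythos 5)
Your proof is correct and follows essentially the same route as the paper's: bound $\delta_\cosupp(v)\le 4$, then exhibit two distinct lexicographically extremal ``corner'' vertices of $V(\cosupp)$ each of degree at most $2$ (the paper calls them $v_{\mathrm{NW}}$ and $v_{\mathrm{SE}}$). Your distinctness check in the degenerate single-column case is slightly more explicit than the paper's, but the argument is the same.
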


\begin{proof}
On a 2D regular grid, $\delta_\cosupp(v) \le 4$. Therefore, we have
\[
4|V(\cosupp)| \ge \sum_{v\in V(\cosupp)} \delta_\cosupp(v).
\]
Since the equality above would hold if and only if $\delta_\cosupp(v) = 4$ for all $v\in V(\cosupp)$,
the claim of the lemma can be proved by showing that there are at least two vertices $v$ with
$\delta_\cosupp(v) \le 2$.
For this, we consider two `extreme corner points' of $\cosupp$.
Let $v_{\mathrm{NW}}$ be the north-west corner point of $\cosupp$ in the sense that
1) there is no vertex $v\in V(\cosupp)$ that is above it, and 2) there is no
vertex $v \in V(\cosupp)$ that is left of $v_{\mathrm{NW}}$ \emph{and} on the same
level (height).
Let $v_{\mathrm{SE}}$ be the south-east corner point of $\cosupp$ defined similarly.
By definition, $\delta_\cosupp(v_{\mathrm{NW}}) \le 2$ and $\delta_\cosupp(v_{\mathrm{SE}}) \le 2$,
and $v_{\mathrm{NW}}$ and $v_{\mathrm{SE}}$ are distinct vertices if $\cosupp \not= \emptyset$.
\end{proof}

\begin{proof}[Proof of Proposition \ref{thm:Uniqueness2DTV}]
We will first prove the upper bound. 
Clearly,
\[
\sum_{v\in V(\cosupp)} \delta_\cosupp(v) = 2 |\cosupp|.
\]
By Lemma~\ref{thm:cornerGain}, we also have
\[
4|V(\cosupp)| \ge \sum_{v\in V(\cosupp)} \delta_\cosupp(v) + 4
\]
Hence, we have
\begin{equation}
\label{eq:vertexEdgeRelation}
|V(\cosupp)| \ge \frac{|\cosupp|}{2} + 1
\end{equation}
By Lemma~\ref{thm:oneComponent},
the value of $\dimmax_{\Oper_{\mathrm{DIF}}}(\cosparsity)$ given by Eq.~\eqref{eq:dimmaxGeneralGraph} is
attained for $\cosupp$ with $J(\cosupp) = 1$ and $|\cosupp| = \cosparsity$. 
Combining this with \eqref{eq:dimmaxGeneralGraph} and \eqref{eq:vertexEdgeRelation}
we get
\[
\dimmax_{\Oper_{\mathrm{DIF}}}(\cosparsity) \le |V|-(|V(\cosupp)|-1) \leq |V|-|\cosupp|/2 = \sdim - \frac{\cosparsity}{2}.
\]
The proof of the lower bound is given in Lemma~\ref{thm:lowerbound2dTV}.
\end{proof}

Before moving on to Lemma~\ref{thm:lowerbound2dTV}, we give a brief motivation for it.
Our goal is to obtain not just a lower bound on $\dimmax_{\Oper_{\mathrm{DIF}}}$ but a lower
bound that is close to optimal.
By Lemma~\ref{thm:oneComponent}, $\dimmax_{\Oper_{\mathrm{DIF}}}(\cosparsity)$ is achieved
for connected $\cosupp$, so we will consider such $\cosupp$'s only ($J(\cosupp) = 1$).
With $J(\cosupp) = 1$, the formula~\eqref{eq:dimmaxGeneralGraph} tells us to look for
the cases when $|V(\cosupp)|$ is minimal in order to compute $\dimmax_{\Oper_{\mathrm{DIF}}}(\cosparsity)$.

What is the shape of the collection of edges $\cosupp$ yielding the minimum ? Recalling Euler's formula for graphs on plane:
\begin{equation}
|V(\cosupp)| - |\cosupp| + |F(\cosupp)| = 2,
\end{equation}
where $F(\cosupp)$ is the faces of $\cosupp$ which includes the `unbounded one', we see that we are seeking $\cosupp$ such that $|F(\cosupp)|$ is maximal, i.e.,  there is maximum number of faces.
By intuition, we conjecture that this happens when $\cosupp$ consists of all the
edges in an almost square, by which we mean $V(\cosupp)$ is an $r\times r$ or
$r\times (r+1)$ rectangular grid or the inbetweens (e.g., an $r\times r$ grid of pixels to which $1 \leq j \leq r$ pixels have been added on one side).
These considerations lead to the following:

\begin{lemma}
\label{thm:lowerbound2dTV}
\[
\dimmax_{\Oper_{\mathrm {DIF}}}(\cosparsity) \ge \sdim - \frac{\cosparsity}{2} - \sqrt{\frac{\cosparsity}{2}} - 1
\]
for $\cosparsity \ge 5$.
\end{lemma}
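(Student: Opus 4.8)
The plan is to use Lemma~\ref{thm:oneComponent} together with the identity~\eqref{eq:dimmaxGeneralGraph} to turn the asserted lower bound into the construction of a single well-chosen subgraph. Since the minimum in~\eqref{eq:dimmaxGeneralGraph} is attained by a connected $\cosupp$ with $|\cosupp|=\cosparsity$ and $J(\cosupp)=1$, one has $\dimmax_{\Oper_{\mathrm{DIF}}}(\cosparsity) = |V|+1-m(\cosparsity)$, where $m(\cosparsity)$ is the minimum of $|V(\cosupp)|$ over connected $\cosupp\subset E$ with exactly $\cosparsity$ edges. Plugging an upper bound of the form $m(\cosparsity)\le \cosparsity/2 + \sqrt{\cosparsity/2} + 2$ into this identity yields precisely the claimed inequality, so it suffices to exhibit, for every $\cosparsity \ge 5$, \emph{one} connected subgraph $\cosupp$ with $|\cosupp|=\cosparsity$ and $|V(\cosupp)| \le \cosparsity/2 + \sqrt{\cosparsity/2} + 2$ (and fitting inside the $N\times N$ image, which is automatic when $\cosparsity\le |E|$).

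The construction is the ``nearly square'' one anticipated in the discussion preceding the lemma. First I would record the basic accounting for a grid of pixels: the full $s \times s$ grid of pixels has $(s+1)^2$ vertices and $2s^2+2s$ edges (equivalently, by Euler's formula, $s^2$ enclosed faces), and appending a new row of $j$ consecutive pixels flush along one side of an existing grid costs $2j+1$ new edges while introducing only $j+1$ new vertices (the first pixel shares one edge with the grid, each subsequent pixel shares two). Then, given $\cosparsity$, I would pick the largest $s$ with $2s^2+2s \le \cosparsity$, so that $q := \cosparsity - 2s^2 - 2s$ satisfies $0 \le q < 4s+4$, and build $\cosupp$ by starting from the $s\times s$ grid and adding complete and partial new rows (at most two rows in total), together with at most one extra pendant edge to absorb an even residue and a short pendant path in the tiny cases $q\in\{1,2\}$, until exactly $\cosparsity$ edges have been placed. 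The inequality $q < 4s+4 = 2(2s+1)+2$ is what guarantees that two rows of length at most $s$, plus one extra edge, always suffice.

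With the construction in hand the proof reduces to checking $|V(\cosupp)| \le \cosparsity/2 + \sqrt{\cosparsity/2} + 2$ in each residue class of $q$. Writing $\cosparsity = 2s^2+2s+q$ one has $\cosparsity/2 = s^2+s+q/2$, while the accounting above gives $|V(\cosupp)| = (s+1)^2 + (\text{number of new vertices})$ with the new count at most $q/2 + O(1)$; the inequality then collapses to something of the form $\sqrt{\cosparsity/2} \ge s - c$ for a small absolute constant $c\ge 0$, which holds because $\cosparsity/2 \ge s^2$. The main obstacle — and the only place needing genuine care — is exactly this bookkeeping near the ``perfect'' values $\cosparsity = 2(s+1)^2 + 2(s+1)$, where the naive row-appending construction comes within a fraction of a vertex of the budget; there one must exploit the slack hidden in $\sqrt{\cosparsity/2} = \sqrt{s^2+s+q/2}$ (which strictly exceeds $s$) rather than the crude bound $\sqrt{\cosparsity/2}\ge s$, and it is the constants in this margin that force the harmless hypothesis $\cosparsity \ge 5$ (for smaller $\cosparsity$ the asserted inequality is either vacuous or checked directly). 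Finally, substituting $m(\cosparsity) \le \cosparsity/2 + \sqrt{\cosparsity/2} + 2$ into $\dimmax_{\Oper_{\mathrm{DIF}}}(\cosparsity) = \sdim + 1 - m(\cosparsity)$ gives $\dimmax_{\Oper_{\mathrm{DIF}}}(\cosparsity) \ge \sdim - \cosparsity/2 - \sqrt{\cosparsity/2} - 1$, as required.
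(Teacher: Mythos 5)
Your proposal is correct and follows essentially the same route as the paper: exhibit a connected, nearly square subgraph with exactly $\cosparsity$ edges and close to $\cosparsity/2+\sqrt{\cosparsity/2}$ vertices, then convert via~\eqref{eq:dimmaxGeneralGraph} (the paper organizes the bookkeeping by the four residue cases $|\cosupp|=2(r^2-r)+2j$, $2(r^2-r)+2j+1$, $2r^2-1+2j$, $2r^2-1+2j+1$ rather than by your residue $q$, but the construction and the final use of $\sqrt{\cosparsity/2}\ge s$ are the same). You correctly identify the only delicate point, namely the slack needed when $\cosparsity$ sits just below the next perfect rectangle.
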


\begin{proof}
For $r\ge2$, we consider a subgraph corresponding to an $r\times r$ square (solid lines)
and consider graphs obtained by adding additional edges in the fashion depicted in 
Figure~\ref{fig:2dTVsquare}.
\begin{figure}[htbp]
\begin{center}
\includegraphics[width=0.3\textwidth]{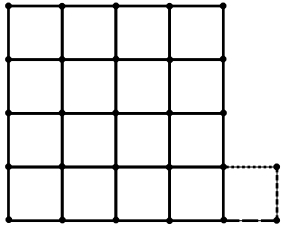}
\caption{Add dashed edges (from longer to shorter dashed) to $r\times r$ square subgraph (solid lines).}
\label{fig:2dTVsquare}
\end{center}
\end{figure}

We find that for the square $\cosupp$, $|\cosupp| = 2(r^2-r)$ and $|V(\cosupp)| = r^2$,
for the graph $\cosupp$ with one additional edge, $|\cosupp| = 2(r^2-r) + 1$ and 
$|V(\cosupp)| = r^2 + 1$,
for the graph $\cosupp$ with two additional edges, $|\cosupp| = 2(r^2-r) + 2$ and
$|V(\cosupp)| = r^2 + 2$,
and for the graph $\cosupp$ with three additional edges, $|\cosupp| = 2(r^2-r) + 3$ and
$|V(\cosupp)| = r^2 + 2$.
In fact, we observe that two edges can be added while adding one additional vertex
until $\cosupp$ corresponds to $r\times(r+1)$ rectangle. 
Summarizing all these, a graph $\cosupp$ that is constructed as above, is contained
$r\times(r+1)$ rectangle (included), and contains $r\times r$ square; satisfies either
$|\cosupp| = 2(r^2 - r) + 2j$ or $|\cosupp| = 2(r^2 - r) + 2j+1$, and
$|V(\cosupp)| = r^2 + j + 1$, for $j=1,\ldots,r-1$---this holds for $j=r$ as well.
(Here, the case $|\cosupp| = 2(r^2-r) + 1$ is not stated.)
By a similar observation, we observe that a graph $\cosupp$ that is constructed similarly
as above, is contained in $(r+1)\times(r+1)$ square (included), and contains $r\times(r+1)$
square; satisfies either
$|\cosupp| = 2r^2-1 + 2j$ or $|\cosupp| = 2r^2 - 1 + 2j+1$, and
$|V(\cosupp)| = r^2 + r + j + 1$, for $j=1,\ldots, r$---this holds for $j=r+1$ as well.

The above observation leads to the following inequalities---which we conjecture to be in fact
equalities:
\begin{align*}
\dimmax_{\Oper_{\mathrm {DIF}}}(2(r^2 - r) + 2j) &\ge \sdim - (r^2 + j),  \quad j=1,\ldots,r, \\
\dimmax_{\Oper_{\mathrm {DIF}}}(2(r^2 - r) + 2j+1) &\ge \sdim - (r^2 + j),  \quad j=1,\ldots,r, \\
\dimmax_{\Oper_{\mathrm {DIF}}}(2r^2-1 + 2j) &\ge \sdim - (r^2 + r + j), \quad j=1,\ldots,r+1, \\
\dimmax_{\Oper_{\mathrm {DIF}}}(2r^2-1 + 2j + 1) &\ge \sdim - (r^2 + r + j), \quad j=1,\ldots,r+1.
\end{align*}
We will now express these in a simpler form in terms of $|\cosupp| = \cosparsity$.
In the first case, letting $\cosparsity = 2(r^2 - r) + 2j$, we have
\[
\sdim - (r^2 + j) = \sdim - \frac{\cosparsity}{2} - r.
\]
Since 
\[
2(r^2-2r+1) \le 2(r^2 -r + 1) \le \cosparsity \le 2r^2,
\]
we have $r-1 \le \sqrt{\frac{\cosparsity}{2}} \le r$.
Hence, we can write $\dimmax_{\Oper_{\mathrm {DIF}}}(\cosparsity) \ge \sdim - \frac{\cosparsity}{2} - \sqrt{\frac{\cosparsity}{2}} - 1$.
The other three cases can be treated similarly and we obtain
\begin{align*}
\dimmax_{\Oper_{\mathrm {DIF}}}(\cosparsity) &\ge \sdim - \frac{\cosparsity}{2} - \sqrt{\frac{\cosparsity}{2}}, \\
\dimmax_{\Oper_{\mathrm {DIF}}}(\cosparsity) &\ge \sdim - \frac{\cosparsity}{2} - \sqrt{\frac{\cosparsity}{2}} - \frac{1}{2}, \\
\dimmax_{\Oper_{\mathrm {DIF}}}(\cosparsity) &\ge \sdim - \frac{\cosparsity}{2} - \sqrt{\frac{\cosparsity}{2}}
\end{align*}
Therefore, for all $\cosparsity\ge 5$, we have 
$\dimmax_{\Oper_{\mathrm {DIF}}}(\cosparsity) \ge \sdim - \frac{\cosparsity}{2} - \sqrt{\frac{\cosparsity}{2}} - 1$.
\end{proof}

\section{Discussion on the analysis exact recovery condition}
\label{sec:discussionERC}

We observe that the analysis ERC condition \eqref{eq:AL1ERC} is not sharp in general, especially for
the redundant $\Oper$.
In the case of GAP, tracing the arguments of Lemma~\ref{thm:GAPCoefRel} and Theorem~\ref{thm:GAPrecovery}, we conclude that in order for \eqref{eq:AL1ERC} to be sharp, there must exist a cosparse
signal $\x_0$ such that $\Oper_{\cosupp^c}\hat\x_0$ matches the exact sign pattern of the row
of $(\OMeas \Oper_\cosupp^T)^\dagger \OMeas \Oper_{\cosupp^c}^T$ with the largest $\ell_1$-norm
\emph{and} is of constant magnitude in absolute value. We remind 
that $\hat\x_0$ is the initial estimate that appears in the algorithm.
Since the collection of $\Oper_{\cosupp^c}\hat\x_0$ may not span the whole $\RR^{\cosupp^c}$, especially when $\Oper$ is over-complete,
it is unreasonable to expect the existence of such an $\x_0$.
Similarly, in the case of analysis $\ell_1$, we know that \eqref{eq:AL1ERC} is obtained 
from \eqref{eq:L1NSC} in a crude way without taking into account the sign patterns
of $\Oper_{\cosupp^c}\x_\cosupp$, which is not sharp in general for redundant $\Oper$.

\paragraph{Average case performance guarantees?}
Can we think of a way to obtain a more realistic success guarantee? 
We have a partial answer for this question in the sense that we can derive a condition---which is
not a guarantee---that reflects empirical results more faithfully.
The idea is, instead of obtaining an upper bound of the left hand side of \eqref{eq:L1NSC}
by disregarding (or considering the worst case of) sign patterns, to model the effects
of the sign patterns by estimating the size of the left hand side in terms of the maximum 
$\ell_2$-norm of the rows of $\left(\OMeas \Oper_\cosupp^T\right)^{\dagger} \OMeas \Oper_{\cosupp^c}^T$ (up to some constants). 
Though further investigation is desirable, we have empirically observed that the condition 
derived in this way reflected better the success rates of GAP and $\ell_1$-minimization.

\paragraph{Desirable properties for $\Oper$ and $\Meas$}
At this point, one may ask a practical question: what are desirable properties of $\Oper$
and $\Meas$ that would help the performance of GAP or $\ell_1$-minimization?
Can we gain some insights from our theoretical result?
For this, we look for scenarios where the entries of 
$\mathbf{R}_0 := \Oper_\cosupp \OMeas^T \left(\OMeas \Oper_\cosupp^T \Oper_\cosupp \OMeas^T\right)^{-1} \OMeas \Oper_{\cosupp^c}^T$ are small (hence, 
it is likely that condition~\eqref{eq:AL1ERC} is satisfied).
We start with the inner expression $\left(\OMeas \Oper_\cosupp^T \Oper_\cosupp \OMeas^T\right)^{-1}$.
The larger the minimum singular value of $\OMeas \Oper_\cosupp^T \Oper_\cosupp \OMeas^T$,
the smaller the entries of $\mathbf{R}_0$. 
First, assuming that the rows of $\Oper$ are normalized, we note that the minimum singular value
is larger when the size $\cosupp$ is larger. 
Second, the closer the minimum singular value is to the maximum one (this is in some sense
an RIP-like condition for $\Oper$), the larger it is.
These two observations tell us that $\Oper$ should have high linear dependencies (to allow large
cosupport $\cosupp$) and the rows of $\Oper$ should be close to uniformly distributed on
$\Sphe^{\sdim-1}$.

Suppose that $\Oper$ has the properties described above. Then, $\mathbf{R}_0$ is well approximated
by $\mathbf{R}_1 := \gamma \Oper_\cosupp \OMeas^T \OMeas \Oper_{\cosupp^c}^T$ for some $\gamma>0$.
Therefore, we ask when the entries of $\Oper_\cosupp \OMeas^T \OMeas \Oper_{\cosupp^c}^T$ are
small. 
Each entry of $\Oper_\cosupp \OMeas^T \OMeas \Oper_{\cosupp^c}^T$ can be guaranteed to be small
if a) $\OMeas$ satisfies an RIP condition for the space spanned by two rows of $\Oper$
and the rows of $\Oper$ are incoherent. 
In summary, it is desirable that:

\begin{itemize}
\item The rows of $\Oper$ are close to uniformly distributed in $\Sphe^{\sdim-1}$.

\item $\Oper$ is highly redundant and have highly linearly dependent structure.

\item $\Meas$ is `independent' from $\Oper$. This has to do with the RIP-like properties.

\item The rows of $\Oper$ are incoherent.

\item The cosparsity $\cosparsity$ are large.
\end{itemize}

\begin{remark}
The 2D finite difference operator $\Oper_{\mathrm{DIF}}$ may be considered incoherent
even though the coherence is relatively large ($1/4$). This is because the majority of pairs
of rows of $\Oper_{\mathrm{DIF}}$ are in fact uncorrelated.
\end{remark}

\paragraph{Heuristic comparison of success guarantees for analysis-$\ell^{1}$ and GAP}
We point out that one can obtain from \eqref{eq:CoefRel} a condition for the GAP that is similar to \eqref{eq:L1NSC}.
For this, we observe from \eqref{eq:CoefRel} that
\begin{align*}
\norm{\infty}{\Oper_\cosupp\hat\x_0} 
&= 
\norm{\infty}{(\OMeas\Oper_{\cosupp}^T)^{\dagger} \OMeas \Oper_{\cosupp^c}^T\Oper_{\cosupp^c}\hat\x_0 }
=
\norm{\infty}{
[\Oper_{\cosupp^c} \OMeas^{T}(\Oper_{\cosupp} \OMeas^T)^{\dagger}]^{T}\Oper_{\cosupp^c}\hat\x_0}.
\end{align*}
Since $\norm{\infty}{\Oper_\cosupp\hat\x_0} < \norm{\infty}{\Oper_{\cosupp^c}\hat\x_0}$ is
the necessary and sufficient condition for the (one-step) success of the GAP, we can derive
a necessary and sufficient condition:
\[
\norm{\infty}{[\Oper_{\cosupp^c} \OMeas^T (\Oper_{\cosupp}\OMeas^T)^{\dagger}]^{T}\Oper_{\cosupp^c}\hat\x_{0}} 
< \norm{\infty}{\Oper_{\cosupp^c}\hat\x_{0}}
\]
where $\x_0$ is varied over all signals with cosupport $\cosupp$ and $\hat\x_0$ is the signal resulting from the first step of GAP .
The above condition can be rewritten in a form similar to \eqref{eq:L1NSC}:
\begin{equation}
\label{eq:GAPNSC}
\sup_{\x_{0}}\norm{\infty}{[\Oper_{\cosupp^c} \OMeas^T (\Oper_{\cosupp}\OMeas^T)^{\dagger}]^{T} (\sign(\Oper_{\cosupp^c}\hat\x_{0})\odot \v)}  < 1
\end{equation}
where $\v := |\Oper_{\cosupp^c}\hat\x_{0}|/\norm{\infty}{\Oper_{\cosupp^c}\hat\x_{0}}$, i.e.,
$\v$ is obtained from $\Oper_{\cosupp^c}\hat\x_{0}$ by taking element-wise absolute values
and normalizing it to a unit $\ell_\infty$-norm,
and $\odot$ denotes the element-wise multiplication of vectors.
Condition~\eqref{eq:GAPNSC} and \eqref{eq:L1NSC} are in a similar form, but there are two differences between the two: First, for \eqref{eq:GAPNSC}, the signal $\hat\x_{0}$ that apears is not
in general a vector with cosupport $\cosupp$. It is rather a signal that arises from an approximation. Second, there is a `weight' vector $\v$ in \eqref{eq:GAPNSC}. One can heuristically deduce that
such a $\v$ favours condition \eqref{eq:GAPNSC} to hold true since the size of most entries of $\v$
likely be smaller than $1$.
Beside these differences, one should keep in mind that condition~\eqref{eq:GAPNSC} is only for one step.

\bibliographystyle{plain}
\bibliography{analysisVSsynthesis}
\end{document}